\newcolumntype{M}[1]{>{\centering\arraybackslash}m{#1}}
\newtheorem{theorem}{Theorem}[section]
\newtheorem{lemma}{Lemma}[section]
\newtheorem{condition}{Condition}[section]
\theoremstyle{definition}
\newcommand{\R}{\mathbb{R}}
\newcommand{\Ep}{\mathbb{E}}
\renewcommand{\Pr}{\mathbb{P}}
\renewcommand{\tilde}{\widetilde}
\renewcommand{\hat}{\widehat}
\newcommand{\diff}{\mathrm{d}}
\DeclareMathOperator{\Var}{Var}
\newcommand{\underk}{\underline{k}}
\newcommand{\norm}[1]{\|#1\|}
\journal{Statistics and Probability Letters}
\begin{document}

\begin{frontmatter}

\title{Dependence of variance on covariate design in nonparametric link regression}

\author[ism,riken]{\corref{okuno_ca}\fnref{okuno_email}Akifumi Okuno}
\author[ism]{\fnref{yano_email}Keisuke Yano}
\cortext[okuno_ca]{Corresponding author}
\fntext[okuno_email]{okuno@ism.ac.jp}
\fntext[yano_email]{yano@ism.ac.jp}
\address[ism]{The Institute of Statistical Mathematics. 10-3 Midori cho, Tachikawa City, Tokyo, 190-8562, Japan.}
\address[riken]{RIKEN Center for Advanced Intelligence Project. Nihonbashi 1-4-1 Nihonbashi, Chuo-ku, Tokyo, 103-0027, Japan.}

\begin{abstract}
This paper discusses a design-dependent nature of variance in nonparametric link regression aiming at predicting a mean outcome at a link, i.e., a pair of nodes, based on currently observed data comprising covariates at nodes and outcomes at links. 
\end{abstract}

\begin{keyword}
Similarity learning \sep
Link prediction 
\MSC[Primary]{62G20}
\MSC[Secondary]{62H20, 62G08}
%% keywords here, in the form: keyword \sep keyword
%% MSC codes here, in the form: \MSC code \sep code
% or \MSC[2008] code \sep code (2000 is the default)
\end{keyword}

\end{frontmatter}

%%
%% Start line numbering here if you want
%%
% \linenumbers

\section{Introduction}
Binary link prediction in node-attributed graphs~\citep{taskar2003link,menon2011link} is the task of predicting the link existence from the corresponding node covariates. It has attracted widespread and long-standing interest in a wide range of applied areas such as social science studies~\citep{gong2014joint}. 
The binary link prediction is generalized to similarity learning~\citep{pmlr-v38-liu15,kulis2012metric,Pinheiro_2018_CVPR}, which aims at predicting possibly non-binary outcomes (representing similarity) defined between two node covariates. 
Similarity learning is classified into two types depending on whether node covariates come from a single domain~\citep{pmlr-v38-liu15} or two different domains~\citep{Pinheiro_2018_CVPR}. The former setting is theoretically interesting as it assumes symmetry of similarity. 
In the single-domain setting, \citet{okuno2020hyperlink} and \citet{graham2020dyadic} consider parametric regression for predicting real-valued outcomes from covariates. Regression for learning similarity in the single-domain setting is called link regression.

This paper focuses on a nonparametric approach to link regression that predicts mean outcomes at links of size $n(n-1)/2$ based on node covariates of size $n$. 
We show that the decay rate of the deterministic bias with respect to a bandwidth parameter and the sample size remains the same regardless of whether the designs of node covariates are fixed (i.e., fixed design) or randomly obtained (i.e., random design), whereas that of the stochastic variance drastically differs depending on covariates' designs. 
This dependence comes from the conditional bias due to the randomness of covariates in random design cases. The conditional bias behaves like the non-degenerate $U$-statistics~\citep{lee1990u} and is controlled by the sample size of independent node covariates. 
Figure~\ref{fig:comparison}(\subref{subfig:NLR}) showcases a numerical result demonstrating this theoretical finding in comparison to conventional nonparametric regression. At the same time, this study demonstrates that there exists a threshold value, below which the difference between two designs disappears. In particular, the variance decay rates in both designs match for the bandwidth alleviating the bias-variance trade-off.

The distinction between the fixed and random designs in regression has been addressed in the literature on statistics and machine learning. Fixed design analysis is common when covariates are physically controlled, while random design analysis is conducted when the values of covariates are unpredictable. These designs are fundamentally different as discussed in \citet{gyorfi2002distribution}. 
\citet{brown1990ancillarity} has displayed the design-dependent nature of the statistical decision theory in parametric regression models. 
\citet{Buja2019a} has demonstrated that 
the randomness of covariates produces additional variance of parametric regression estimates.
However, in nonparametric regression, the difference in the decay rates of bias and variance between fixed and random designs is known to appear only in a multiplicative constant (e.g., \citealp{hardle1990applied}; \citealp{fan1992}; \citealp{gyorfi2002distribution}), implying the negligibility of the difference in a usual asymptotic framework (Figure \ref{fig:comparison}(\subref{subfig:conventional})). The situation drastically changes in nonparametric link regression as pairs of node covariates are dependent (despite outcomes at links being independent), and the difference cannot be ignored.

\begin{figure}[!ht]
\centering
\includegraphics[scale=0.22]{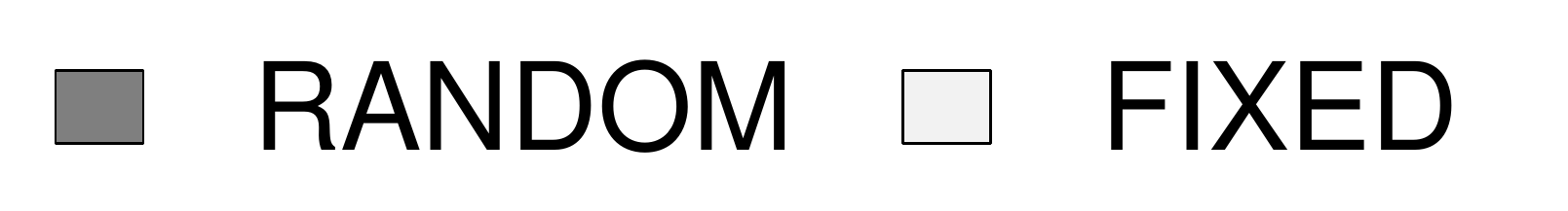} \\
  \begin{minipage}[t]{0.45\linewidth}
    \centering
    \includegraphics[scale=0.4]{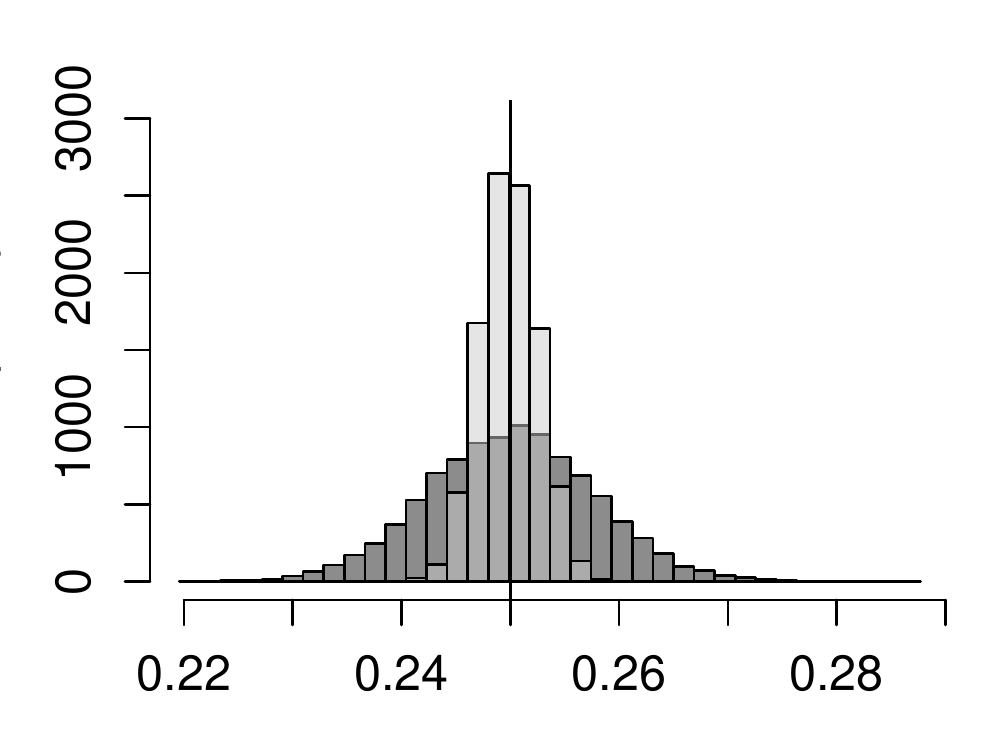}
    \subcaption[Nonparametric link regression]{\tabular[t]{@{}l@{}}Nonparametric link regression estimate \\ $\hat{f}_{n,h}(x,x')$ at a query $(x,x')=(0.5,0.5)$. \endtabular}
    \label{subfig:NLR}
\end{minipage}
  \begin{minipage}[t]{0.45\linewidth}
    \centering
    \includegraphics[scale=0.4]{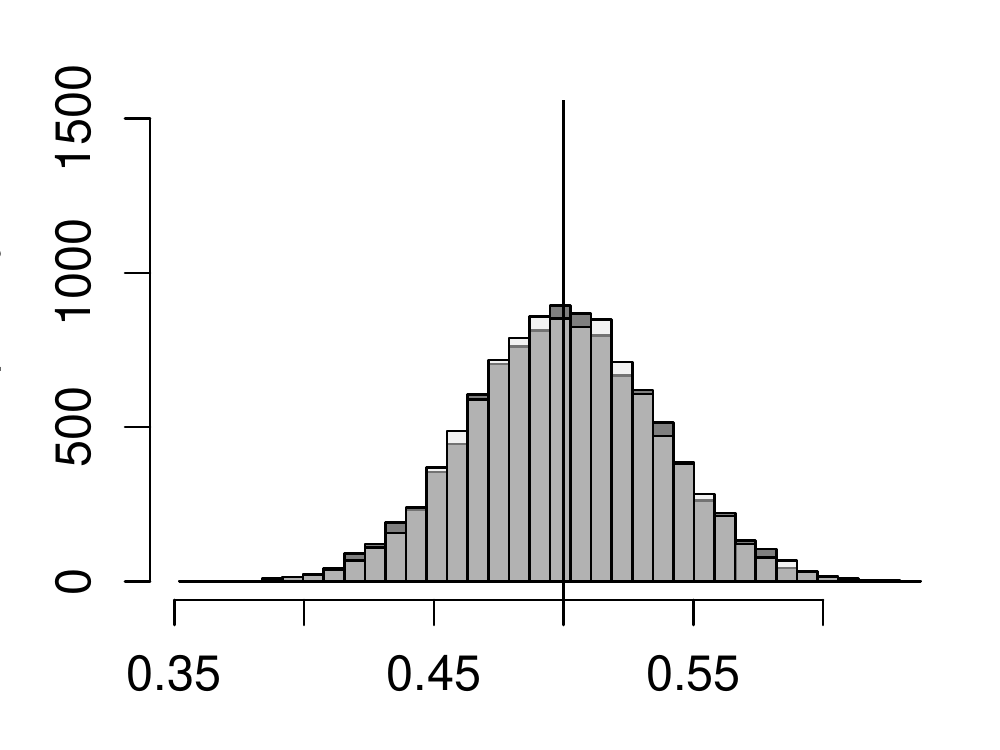}
    \subcaption[Nonparametric Reg.]{\tabular[t]{@{}l@{}}Nonparametric regression estimate \\ $\hat{f}_{n,h}(x)$ at a query $x=0.5$. \endtabular}
    \label{subfig:conventional}
  \end{minipage}
  \caption{
  Histograms of the values of estimates under univariate random and fixed designs: (\subref{subfig:NLR}) a kernel smoother for $f(x,x')=\mathbb{E}(Y \mid X=x,X'=x')=xx'$ defined in (\ref{eq: kernel smoother}). 
  (\subref{subfig:conventional}) a kernel smoother for $f(x)=\mathbb{E}(Y \mid X=x)=x$. For both experiments, 500 covariates were employed and the bandwidth was set to $h=h_n:=n^{-1/(d+3)}$ as the H{\"o}lder smoothness of $f$ is $\beta=1$.}
  \label{fig:comparison}
\end{figure}

\subsection{Notations}
The following notation is adopted throughout this paper. Let $\mathcal{X}$ be a compact subset of $\mathbb{R}^d$ and $\mathcal{Y}$ be a subset of $\mathbb{R}$. Let $\|\cdot\|$ denote the Euclidean norm of $\mathbb{R}^d$. For two sequences $\{a_{n,h}\}_{n \in \mathbb{N},h>0}$ and $\{b_{n,h}\}_{n \in \mathbb{N},h>0}$ the notation $a_{n,h}=O(b_{n,h})$ indicates that there exists an absolute constant $c>0$ for which $a_{n,h} \le c \cdot b_{n,h}$. Operations $\mathbb{E}[\cdot]$ and $\Var[\cdot]$ denote taking expectation and variance, respectively. For a conditional expression $\mathcal{S}$, $\mathbbm{1}_{\mathcal{S}}$ denotes an indicator function taking the value of 1 if and only if $\mathcal{S}$ is satisfied and 0 otherwise. For a set $A$, $|A|$ denotes the number of elements in the set.

\section{Nonparametric link regression}
\label{sec:nonparametric_link_regression}
Suppose that given node covariates $\{X_{i} \in \mathcal{X}:i=1,\ldots,n\}$, symmetric outcomes $Y_{i_{1},i_{2}}=Y_{i_2,i_1} \in \mathcal{Y}$ independently follow a conditional distribution $Q( Y_{i_{1},i_{2}} \mid X_{i_{1}}, X_{i_{2}})$ for $1 \le i_1 < i_2 \le n$. 
Our aim is to estimate the symmetric  conditional mean of $Y \mid X,X' \sim Q$ at a query $(x,x')$
\begin{align*}
f(x, x'):= \mathbb{E}[ Y \mid X=x\,,\,X'=x']
\end{align*} 
on the bases of the observations $\{(Y_{i_{1},i_{2}},X_{i_{1}},X_{i_{2}}) \in \mathcal{Y} \times \mathcal{X}^{2} : (i_{1},i_{2}) \in I_n \}$ with an index set $I_n:=\{(i_{1},i_{2}) : 1 \le i_{1} \ne i_{2} \le n\}$. 
In this study, the symmetric conditional mean is called a link regression function.
Consider nonparametric estimation over the link regression function. In this study, the following kernel smoother is used for the link regression function $f$:
\begin{align}
    \hat{f}_{n,h}(x,x') 
    &:= 
    \frac{|I_n|^{-1}\sum_{(i_1,i_2) \in I_n} Y_{i_1 i_2} \{ K_{h}(x-X_{i_1} )K_{h}( x' -X_{i_2}) +\lambda_n\}}
    {|I_n|^{-1}\sum_{(i_1,i_2) \in I_n} \{K_{h}(  x-X_{i_1} )K_{h}( x' -X_{i_2})+\lambda_n\}},
    \label{eq: kernel smoother}
\end{align}
where $K_{h}(\cdot):= h^{-d}K(\cdot/h)$ with a bandwidth $h>0$ and a $d$-variate kernel $K$, and $\lambda_{n}$ is a regularization parameter. 
Regularization is applied to the denominator as suggested by \citet{fan1993} to prevent it from becoming zero. Furthermore, the term $\lambda_n$ is added to the nominator for the kernel smoother to keep the form of the weighted average of outcomes. 

In our theory, the bandwidth $h=h_n$ and the regularization parameter $\lambda_n$ are assumed to satisfy
\begin{align}
    n^{-1/d} \le h \le 1, 
    \qquad 
    \frac{1}{(nh^d)^{\nu}} \le \lambda_n \le h^d
    \label{eq:bandwidth_regularization_conditions}
\end{align}
for some $\nu>0$; 
this condition yield inequalities $1 \le nh^d$ and $1/\{\lambda_n (nh^d)^{\nu}\} \le 1$, and the condition is satisfied by $\lambda_n=n^{-1}$ with the optimal bandwidth $h=n^{-1/(\beta+d)}$ obtained in Theorem~\ref{theorem: lower bound}. 

It is well-known that the kernel smoother has a limitation to attaining the optimal rate of convergence for function classes with higher order smoothness~(\citet{ruppert1994multivariate}); this limitation is alleviated by local polynomial regression. As the extension to local polynomial regression is straightforward in nonparametric link regression, it is not considered in this paper for the ease of notation.

\subsection{Theoretical result}

Properties of the kernel smoother $\hat{f}_{n,h}$ are derived, with the following conditions imposed on the true link regression function $f$,  conditional variance $\sigma$, kernel $K$ and the covariate $X$.

\begin{condition}\label{condition: link regression function} 
The following hold:
\begin{enumerate}
    \item[(a)] 
	The link regression function  $f(x,x')$ 
	is contained in $\mathcal{F}(\beta,L)$ with $0< \beta \le 1$ and $L>0$ defined as
    $\mathcal{F}(\beta,L)
        :=
        \left\{ f: 
        |f(x,x') - f(\tilde{x},x')| \le L\|x-\tilde{x}\|^{\beta}, \, 
        |f(x,x')| \le L
        \,,\,x,\tilde{x},x'\in \mathcal{X}
        \right\}$; 
    \item[(b)] There exists $\tau>0$ for which $\sigma^{2}(x,x'):=\Var [Y\mid X=x,\,X'=x']< \tau$ for $x,x'\in\mathcal{X}$.
\end{enumerate}
\end{condition}

\begin{condition}\label{condition: kernel} 
The following hold:
\begin{enumerate}
    \item[(a)] The $d$-variate kernel $K(z)$ is a compactly supported, symmetric, bounded density function with a mean of zero and $K_{\mathrm{max}}:=\sup_{z} K(z)$, and has the finite second moment;
    \item[(b)] There exist $\underk>0$ and $r>0$ for which $\underk \cdot \sup_{z}K(z) \le \inf_{z: \|z\| \le r} K(z)$.
\end{enumerate}
\end{condition}
\begin{condition}\label{condition: design}
One of the following holds:
\begin{enumerate}

	\item[(a)]
	$\{X_{i}:i=1,2,\ldots,n\}$ is a deterministic triangular array that satisfies $c_X / n^{1/d}\le \min_{i,j} \|X_{i}-X_{j}\| \le  \max_{i}\min_{j} \|X_{i}-X_{j}\|\le C_X/n^{1/d}$ for some $c_X,C_X>0$;
	
	\item[(b)] $\{X_{i}:i=1,2,\ldots,n\}$ is an array of independent and identically distributed (i.i.d.) random variables from the marginal density $m$, where $m$ is continuous and bounded from both above and below: $0< l \le m(x) \le u <\infty$.
\end{enumerate}
\end{condition}

Condition \ref{condition: link regression function} specifies the smoothness of the link regression function $f$. A larger $\beta$ yields a smoother function $f$, which is easier to estimate. Condition \ref{condition: kernel} is mild and satisfied by a large variety of kernels such as the boxcar and Epanechnikov kernels. Condition \ref{condition: design} (a) makes covariates nearly equispaced. Condition \ref{condition: design} (b) is a mild condition; for example, the uniform distribution over $\mathcal{X}$ satisfies this condition.  Hereafter, we call Conditions \ref{condition: design} (a) and \ref{condition: design} (b) as the fixed and random designs, respectively.
Let
\begin{align*}
    \rho_{1}(n,h) &:= \sup_{f \in \mathcal{F}(\beta,L)}
    \left|f(x,x')-\mathbb{E}\left[\hat{f}_{n,h}(x,x')\right]\right|^2, \\
    \rho_{2}(n,h) &:= \sup_{f \in \mathcal{F}(\beta,L)}\mathbb{E}\left[\Var\left[\hat{f}_{n,h}(x,x') \mid X_1,X_2,\ldots,X_n\right]\right], \\
    \rho_{3}(n,h) &:= \sup_{f \in \mathcal{F}(\beta,L)}\Var\left[
    f(x,x')-
    \mathbb{E}\left[\hat{f}_{n,h}(x,x') \mid X_1,X_2,\ldots,X_n\right]\right]
\end{align*}
where these terms upper-bound the squared bias and the variance of $\hat{f}_{n,h}(x,x')$ by $\rho_1$ and $\rho_2+\rho_3$, respectively, whereby the overall risk $\mathbb{E}|f(x,x')-\hat{f}_{n,h}(x,x')|^2$ is upper-bounded by $\rho_1+\rho_2+\rho_3$. 
These terms $\rho_1,\rho_2,\rho_3$ are evaluated as follows, with the proof provided in Supplement~\ref{app:proof of proposition: upper bounds of terms}. 
\begin{theorem}\label{proposition: upper bounds of terms}
Assume that 
Conditions \ref{condition: link regression function} and \ref{condition: kernel} are satisfied, and
$n^{-1/d} \le h \le 1$ and $1/(nh^d)^{\nu} \le \lambda_{n}\le h^{d}$ for some $\nu>0$. Then, there exists a positive constant $C$ not depending on $n$ and $h$, for which the following holds:
\begin{itemize}
    \item In both cases of Condition \ref{condition: design}, $\rho_{1}(n,h) \le C h^{2\beta}$ and $\rho_{2}(n,h) \le C (nh^{d})^{-2}$;
    \item Under Condition \ref{condition: design} (a), $\rho_{3}(n,h) =0$; under Condition \ref{condition: design} (b), $\rho_{3}(n,h) \le C h^{2\beta}(nh^{d})^{-1}$.
\end{itemize}
\end{theorem}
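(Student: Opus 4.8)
The starting point will be to isolate the contribution of the random covariates to the error. Write $W_{i_1i_2}:=K_h(x-X_{i_1})K_h(x'-X_{i_2})+\lambda_n$ and $\hat D_{n,h}:=|I_n|^{-1}\sum_{(i_1,i_2)\in I_n}W_{i_1i_2}$, so that $\hat f_{n,h}(x,x')=(|I_n|\hat D_{n,h})^{-1}\sum_{(i_1,i_2)\in I_n}Y_{i_1i_2}W_{i_1i_2}$. Since each $W_{i_1i_2}$ depends only on $X_1,\dots,X_n$ and $\mathbb{E}[Y_{i_1i_2}\mid X_1,\dots,X_n]=f(X_{i_1},X_{i_2})$, one decomposes $\hat f_{n,h}(x,x')-f(x,x')=A_{n,h}+B_{n,h}$ with $B_{n,h}:=\hat f_{n,h}(x,x')-\mathbb{E}[\hat f_{n,h}(x,x')\mid X_1,\dots,X_n]$ and
\[
A_{n,h}:=\frac{|I_n|^{-1}\sum_{(i_1,i_2)\in I_n}\bigl(f(X_{i_1},X_{i_2})-f(x,x')\bigr)W_{i_1i_2}}{\hat D_{n,h}} .
\]
Then $A_{n,h}$ depends only on $X_1,\dots,X_n$ while $\mathbb{E}[B_{n,h}\mid X_1,\dots,X_n]=0$, so $\rho_1=\sup_f(\mathbb{E} A_{n,h})^2$, $\rho_2=\sup_f\mathbb{E}[B_{n,h}^2]$ and $\rho_3=\sup_f\Var[A_{n,h}]$, and it suffices to bound these three moments uniformly over $\mathcal F(\beta,L)$. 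A lemma on the denominator will be used repeatedly: one always has $\hat D_{n,h}\geq\lambda_n\geq(nh^d)^{-\nu}$; moreover there is an absolute constant $c_0>0$ such that $\hat D_{n,h}\geq c_0$ holds deterministically for all large $n$ under Condition~\ref{condition: design}~(a) (the near-equispacing places $\asymp nh^d$ design points in the radius-$rh$ ball about $x$, resp.\ $x'$, each contributing at least $\underk K_{\mathrm{max}}h^{-d}$ by Condition~\ref{condition: kernel}~(b)), and $\mathbb{P}(\hat D_{n,h}<c_0)\leq e^{-c\,nh^d}$ under Condition~\ref{condition: design}~(b) (a Chernoff bound for the binomial counts of those points). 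Because $\lambda_n\leq h^d$ and $\lambda_n\geq(nh^d)^{-\nu}$ together force $nh^d\geq n^{1/(\nu+1)}\to\infty$, the factor $e^{-c\,nh^d}$ beats every power of $nh^d$; combined with $\hat D_{n,h}\geq\lambda_n$ on the exceptional set, this gives $\mathbb{E}[\hat D_{n,h}^{-k}]=O(1)$ for each fixed $k$, in either design.

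For $\rho_1$ and $\rho_2$ I would split $A_{n,h}=A_{n,h}^{(1)}+A_{n,h}^{(2)}$ along $W_{i_1i_2}=K_h(x-X_{i_1})K_h(x'-X_{i_2})+\lambda_n$. On the support of the kernel product, $\|x-X_{i_1}\|$ and $\|x'-X_{i_2}\|$ are at most $Rh$ with $R$ the radius of the support of $K$, so Condition~\ref{condition: link regression function}~(a) and the symmetry of $f$ give $|f(X_{i_1},X_{i_2})-f(x,x')|\leq L\|x-X_{i_1}\|^{\beta}+L\|x'-X_{i_2}\|^{\beta}\leq 2L(Rh)^{\beta}$; as the $A^{(1)}$-numerator is dominated by the denominator, $|A_{n,h}^{(1)}|\leq 2L(Rh)^{\beta}$ pointwise, while $|A_{n,h}^{(2)}|\leq 2L\lambda_n/\hat D_{n,h}$ gives $\mathbb{E}|A_{n,h}^{(2)}|=O(\lambda_n)=O(h^d)$ by the denominator lemma. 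Since $\beta\leq1\leq d$ and $h\leq1$, $\mathbb{E}|A_{n,h}|=O(h^{\beta})$, hence $\rho_1\leq Ch^{2\beta}$. For $\rho_2$, conditioning on $X_1,\dots,X_n$ makes the $\binom{n}{2}$ distinct outcomes independent with variance less than $\tau$, so $\mathbb{E}[B_{n,h}^2\mid X_1,\dots,X_n]\leq 2\tau(|I_n|\hat D_{n,h})^{-2}\sum_{(i_1,i_2)\in I_n}W_{i_1i_2}^2$; bounding $\sum W_{i_1i_2}^2\leq(\max_{(i_1,i_2)}W_{i_1i_2})\,|I_n|\hat D_{n,h}$ and $\max_{(i_1,i_2)}W_{i_1i_2}\leq K_{\mathrm{max}}^2h^{-2d}+\lambda_n\leq(K_{\mathrm{max}}^2+1)h^{-2d}$ gives $\mathbb{E}[B_{n,h}^2\mid X_1,\dots,X_n]=O\bigl(h^{-2d}/(|I_n|\hat D_{n,h})\bigr)$; taking expectations with $\mathbb{E}[\hat D_{n,h}^{-1}]=O(1)$ and $|I_n|\asymp n^2$ yields $\rho_2\leq C(nh^d)^{-2}$. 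Both bounds hold under either design.

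The term $\rho_3$ is where the designs diverge. Under Condition~\ref{condition: design}~(a) the array is deterministic, $A_{n,h}$ is a constant, and $\rho_3=0$. Under Condition~\ref{condition: design}~(b), set $G_n:=|I_n|^{-1}\sum_{(i_1,i_2)\in I_n}(f(X_{i_1},X_{i_2})-f(x,x'))W_{i_1i_2}$ and $d_0:=\mathbb{E}[\hat D_{n,h}]$ (bounded below by a positive constant, using Condition~\ref{condition: design}~(b) and mild regularity of $\mathcal X$ at $x,x'$), and write $A_{n,h}=G_n/d_0-G_n(\hat D_{n,h}-d_0)/(d_0\hat D_{n,h})$, so $\Var[A_{n,h}]\leq 2d_0^{-2}\Var[G_n]+2d_0^{-2}\,\mathbb{E}[G_n^2(\hat D_{n,h}-d_0)^2\hat D_{n,h}^{-2}]$. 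The key computation is $\Var[G_n]$: $G_n$ is a (symmetrizable) second-order $U$-statistic of i.i.d.\ variables, and re-running the H\"older estimate above inside its degree-one Hoeffding projection, together with $\int K_h(x-\cdot)^2 m\leq K_{\mathrm{max}}u\,h^{-d}$, shows its degree-one and degree-two variance components obey $\zeta_1=O(h^{2\beta-d})$ and $\zeta_2=O(h^{2\beta-2d})$; by the standard variance formula for $U$-statistics \citep{lee1990u}, $\Var[G_n]=O(\zeta_1/n+\zeta_2/n^2)=O\bigl(h^{2\beta}(nh^d)^{-1}\bigr)$ using $nh^d\geq1$. For the cross term, the same H\"older bound gives the deterministic inequality $|G_n|\leq C(h^{\beta}\hat D_{n,h}+\lambda_n)$, hence $|G_n/\hat D_{n,h}|\leq C(h^{\beta}+\lambda_n/\hat D_{n,h})$ and $\mathbb{E}[G_n^2(\hat D_{n,h}-d_0)^2\hat D_{n,h}^{-2}]\leq 2C^2h^{2\beta}\Var[\hat D_{n,h}]+2C^2\lambda_n^2\,\mathbb{E}[(\hat D_{n,h}-d_0)^2\hat D_{n,h}^{-2}]$; with $\Var[\hat D_{n,h}]=O((nh^d)^{-1})$ (again a $U$-statistic bound) and $\mathbb{E}[(\hat D_{n,h}-d_0)^2\hat D_{n,h}^{-2}]=O((nh^d)^{-1})$ (split on $\{\hat D_{n,h}\geq c_0\}$ and use $\mathbb{E}[\hat D_{n,h}^{-2}]=O(1)$ against $\mathbb{P}(\hat D_{n,h}<c_0)\leq e^{-c\,nh^d}$ on the complement), and since $\lambda_n^2\leq h^{2d}\leq h^{2\beta}$, the cross term is also $O(h^{2\beta}(nh^d)^{-1})$. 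Therefore $\rho_3\leq Ch^{2\beta}(nh^d)^{-1}$.

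The main obstacle is the sharp bound $\zeta_1=O(h^{2\beta-d})$ on the degree-one component of $G_n$: the crude estimate is $O(h^{-d})$, and the extra factor $h^{2\beta}$ — precisely what upgrades the conditional-variance rate $(nh^d)^{-2}$ to the inflated rate $(nh^d)^{-1}$ — has to be extracted from the H\"older oscillation of $f$ inside the projected kernel, uniformly over $\mathcal F(\beta,L)$. The remaining technicalities (denominator control in each design, boundary behaviour of $K_h(x-\cdot)$, and the $U$-statistic moment bookkeeping) are routine; the one point that must be handled with care is that $\mathbb{P}(\hat D_{n,h}<c_0)$ is exponentially small while $\lambda_n^{-1}$ is only polynomially large — a consequence of the stated relation between $\lambda_n$ and $h$ — which is what keeps every exceptional-set remainder negligible.
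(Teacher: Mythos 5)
Your plan is sound and follows the same skeleton as the paper's proof: the same three-way split into the mean of the conditional bias ($\rho_1$), the conditional variance ($\rho_2$), and the variance of the conditional bias ($\rho_3$); the pointwise H\"older bound on the kernel's support plus the $\lambda_n\le h^d\le h^{\beta}$ remainder for $\rho_1$; the $\max_{i_1,i_2}W_{i_1,i_2}\le (K_{\max}^2+1)h^{-2d}$ bound for $\rho_2$; and, crucially, the identification of $\rho_3$ with the variance of a non-degenerate second-order $U$-statistic whose Hoeffding components obey $\zeta_1=O(h^{2\beta-d})$ and $\zeta_2=O(h^{2\beta-2d})$ — Lemma~\ref{lemma:var_snh} carries out exactly the projection estimate you describe, using Lemma~\ref{lemma: common bound}. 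You diverge from the paper in two technical instruments, both legitimately. For the random-design denominator you use a Chernoff bound and an exceptional set of probability $e^{-cnh^d}$, correctly extracting $nh^d\ge n^{1/(\nu+1)}$ from $(nh^d)^{-\nu}\le\lambda_n\le h^d$ so that the exponential beats the polynomial blow-up $\lambda_n^{-k}$; the paper instead controls $\varepsilon_{n,h}=T_{n,h}^{-1}-\overline{T}_{n,h}^{-1}$ through polynomial moments of $T_{n,h}-\overline{T}_{n,h}$ (Lemmas~\ref{lemma: moment of T} and~\ref{lemma: moment of epsilon}), and for $\rho_2$ under the random design it performs an explicit binomial calculus on the counts $B(n,v(x))$, which your $\mathbb{E}[\hat D_{n,h}^{-1}]=O(1)$ argument replaces. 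For the cross term in $\rho_3$ your deterministic bound $|S_{n,h}|\le C(h^{\beta}T_{n,h}+\lambda_n)$ avoids the fourth moment of $S_{n,h}$ entirely, whereas the paper applies Cauchy--Schwarz with $\mathbb{E}[S_{n,h}^4]^{1/2}\mathbb{E}[\varepsilon_{n,h}^4]^{1/2}$ and therefore needs the Rosenthal-type Lemma~\ref{lemma:var_snhenh}; your shortcut is a genuine simplification. The one caveat is the lower bound $\mathbb{E}[\hat D_{n,h}]\ge c>0$, which (like the paper's $\overline{T}_{n,h}\ge l^2$) implicitly requires the query to be an interior point of $\mathcal{X}$ — you flag this, and it is not a gap relative to the paper.
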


With the bandwidth $h=n^{-1/(s+d)}$, Theorem~\ref{proposition: upper bounds of terms} yields the following variance evaluation: 
\begin{align*}
\Var[ \hat{f}_{n,h}(x,x') ]
=
\begin{cases}
O\left( n^{-\frac{2s}
{s+d}}
\right) &
\text{ under Condition \ref{condition: design} (a), } \\
O\left( n^{-\frac{
\min\{
2s,2\beta+s
\}
}{s+d}} \right) &
\text{ under Condition \ref{condition: design} (b). }
\end{cases}
\end{align*}

This implies that the stochastic variance in the random design decays slower than that in the fixed design for $s>2\beta$. This also implies that the decay rates of the stochastic variances in both designs match in the other regimes. Hence, the smoothness misspecification produces a difference in variances with respect to the designs. 
See also Supplement~\ref{sec:numerical_studies} for numerical experiments demonstrating the dependence of the variance on the covariate design.

Theorem~\ref{proposition: upper bounds of terms} also yields the upper-bound of the risk. 

\begin{theorem}\label{theorem: upper bound}
Assume that Conditions \ref{condition: link regression function} and \ref{condition: kernel} are satisfied, $n^{-1/d} \le h \le 1$ and $1/(nh^d)^{\nu} \le \lambda_{n}\le h^{d}$ for some $\nu>0$.
Then, there exists a positive constant $C$ not depending on $n$ and $h$, for which the following holds:
\begin{align*}
&\sup_{f \in \mathcal{F}(\beta,L)}\mathbb{E}|f(x,x')-\hat{f}_{n,h}(x,x')|^2 
 \le
    \begin{cases}
     C\{h^{2\beta} + (nh^{d})^{-2}\}
     & \text{under Condition \ref{condition: design} (a)},\\
     C \{h^{2\beta}+
     h^{2\beta}(nh^{d})^{-1}+ (nh^{d})^{-2}\}
     & \text{under Condition \ref{condition: design} (b)}.
    \end{cases}
\end{align*}
\end{theorem}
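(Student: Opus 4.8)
The plan is to obtain Theorem~\ref{theorem: upper bound} as an immediate consequence of Theorem~\ref{proposition: upper bounds of terms} via two standard decompositions of the pointwise mean squared error. First I would split the risk at the query $(x,x')$ into the squared bias plus the variance,
\begin{align*}
\mathbb{E}|f(x,x')-\hat{f}_{n,h}(x,x')|^2
=\bigl|f(x,x')-\mathbb{E}[\hat{f}_{n,h}(x,x')]\bigr|^2+\Var[\hat{f}_{n,h}(x,x')],
\end{align*}
and take the supremum over $f\in\mathcal{F}(\beta,L)$, so that the first term is dominated by $\rho_1(n,h)$ by definition.

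Second, I would resolve the variance term by the law of total variance conditionally on the node covariates $X_1,\ldots,X_n$:
\begin{align*}
\Var[\hat{f}_{n,h}(x,x')]
=\mathbb{E}\bigl[\Var[\hat{f}_{n,h}(x,x')\mid X_1,\ldots,X_n]\bigr]
+\Var\bigl[\mathbb{E}[\hat{f}_{n,h}(x,x')\mid X_1,\ldots,X_n]\bigr].
\end{align*}
Upon taking the supremum over $f$, the first summand is exactly $\rho_2(n,h)$; for the second summand, since $f(x,x')$ is a fixed scalar at the given query, subtracting it leaves the variance unchanged, so that $\Var[\mathbb{E}[\hat{f}_{n,h}(x,x')\mid X_1,\ldots,X_n]]=\Var[f(x,x')-\mathbb{E}[\hat{f}_{n,h}(x,x')\mid X_1,\ldots,X_n]]$, whose supremum over $f$ is $\rho_3(n,h)$. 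Combining the two displays and using subadditivity of the supremum gives $\sup_{f\in\mathcal{F}(\beta,L)}\mathbb{E}|f(x,x')-\hat{f}_{n,h}(x,x')|^2\le \rho_1(n,h)+\rho_2(n,h)+\rho_3(n,h)$, which is the bound announced in the discussion preceding the statement.

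Finally I would substitute the estimates of Theorem~\ref{proposition: upper bounds of terms}. Under Condition~\ref{condition: design}~(a) this yields $\rho_1\le Ch^{2\beta}$, $\rho_2\le C(nh^d)^{-2}$ and $\rho_3=0$, hence the first branch; under Condition~\ref{condition: design}~(b) the additional bound $\rho_3\le Ch^{2\beta}(nh^d)^{-1}$ produces the second branch. Absorbing the finitely many constants into a single $C$ independent of $n$ and $h$ concludes the argument. I do not anticipate a genuine obstacle here: all the analytic effort (the kernel-smoother bias analysis, the conditional-variance bound, and the $U$-statistic-type control of the conditional bias that yields $\rho_3$) is already packaged inside Theorem~\ref{proposition: upper bounds of terms}. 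The only point needing care is the second decomposition — in particular, recognizing that the variance of the conditional bias, which is the term distinguishing the random design from the fixed design, is precisely $\rho_3$.
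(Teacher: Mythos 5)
Your proposal is correct and is exactly the argument the paper intends: the paper states Theorem~\ref{theorem: upper bound} as an immediate consequence of Theorem~\ref{proposition: upper bounds of terms} via the bias--variance decomposition and the law of total variance, with the risk bounded by $\rho_1+\rho_2+\rho_3$ as announced in the text preceding the definitions of these quantities. Your handling of the second decomposition (identifying $\Var[\mathbb{E}[\hat{f}_{n,h}(x,x')\mid X_1,\ldots,X_n]]$ with $\rho_3$ by subtracting the deterministic constant $f(x,x')$) and the final substitution of the bounds are both accurate.
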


Combined with the lower bound estimate of the risks, the above evaluation also yields the optimal bandwidth. If the bandwidth $h$ is set to $h=n^{-1/(\beta+d)}$ with a true smoothness $\beta$, the optimal rate of convergence is obtained in both designs.

\begin{theorem}\label{theorem: lower bound}
Assume that the conditional distribution $Q$ is a normal distribution. In either Condition \ref{condition: design} (a) or (b), the kernel smoother with the bandwidth $h= n^{-1/(\beta+d)}$ attains asymptotically minimax optimality
\begin{align*}
    \limsup_{n\to\infty} \bigg\{ 
    \sup_{f \in \mathcal{F}(\beta,L)}\mathbb{E}\left[|f(x,x')-\hat{f}_{n,h}(x,x')|^2\right] 
 \big{/} 
    \inf_{\hat{f}_n}\sup_{f\in \mathcal{F}(\beta , L)}\mathbb{E}\left[ |f(x,x')-\hat{f}_n(x,x')|^{2} \right] 
\bigg\}<\infty,
\end{align*}
and the asymptotically minimax risk is of order $n^{-2\beta/(\beta+d)}$ with respect to $n$.
\end{theorem}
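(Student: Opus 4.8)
\emph{Proof strategy.} The plan is to combine the upper bound already furnished by Theorem~\ref{theorem: upper bound} with a matching minimax lower bound. For the upper bound side I would simply substitute $h=n^{-1/(\beta+d)}$ into Theorem~\ref{theorem: upper bound}: then $h^{2\beta}=n^{-2\beta/(\beta+d)}$ and $(nh^{d})^{-1}=n^{-\beta/(\beta+d)}$, so $h^{2\beta}+h^{2\beta}(nh^{d})^{-1}+(nh^{d})^{-2}\le 3\,n^{-2\beta/(\beta+d)}$, giving $\sup_{f\in\mathcal{F}(\beta,L)}\mathbb{E}|f(x,x')-\hat f_{n,h}(x,x')|^{2}\le C'\,n^{-2\beta/(\beta+d)}$ in both designs (with $\lambda_n=n^{-1}$ and $\nu$ taken large enough that \eqref{eq:bandwidth_regularization_conditions} holds). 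It then remains to prove the matching lower bound $\inf_{\hat f_n}\sup_{f\in\mathcal{F}(\beta,L)}\mathbb{E}|f(x,x')-\hat f_n(x,x')|^{2}\ge c\,n^{-2\beta/(\beta+d)}$ in each design; dividing the two bounds yields the stated claim and identifies the minimax rate as $n^{-2\beta/(\beta+d)}$.

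For the lower bound I would run the classical two-point (Le Cam) argument with a \emph{doubly localized} perturbation. Fix a smooth $\phi\colon\mathbb{R}^{d}\to[0,1]$ supported in the unit ball with $\phi(0)>0$, set $h=n^{-1/(\beta+d)}$ and $\delta=c_{0}h^{\beta}$, and define the symmetric bump
\[
g(u,v):=\delta\bigl[\phi((u-x)/h)\phi((v-x')/h)+\phi((v-x)/h)\phi((u-x')/h)\bigr],
\]
which collapses to $2\delta\,\phi((u-x)/h)\phi((v-x)/h)$ when $x=x'$. For $n$ large and $c_{0}$ small (depending only on $L$ and $\phi$), both $f_{0}\equiv0$ and $f_{1}:=g$ lie in $\mathcal{F}(\beta,L)$, while $|f_{1}(x,x')-f_{0}(x,x')|\asymp\delta$. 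I would take $Q=N(f(X,X'),\sigma^{2})$ with a fixed $\sigma^{2}<\tau$; the Gaussian assumption is used precisely here, since it makes the per-link Kullback--Leibler divergence exactly $(\text{mean gap})^{2}/(2\sigma^{2})$. Conditioning on the covariates, the data laws satisfy $\mathrm{KL}(P_{f_{0}\mid X}\,\|\,P_{f_{1}\mid X})=\frac{1}{2\sigma^{2}}\sum_{(i_{1},i_{2})\in I_n}g(X_{i_{1}},X_{i_{2}})^{2}\le\frac{\delta^{2}}{2\sigma^{2}}\,N_{X}$, where $N_{X}$ is the number of links with $(X_{i_{1}},X_{i_{2}})\in\supp g$.

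The crux is to control $N_{X}$. Under the fixed design, the minimal-separation part of Condition~\ref{condition: design}(a) forces at most $O(nh^{d})$ of the $X_{i}$ into any radius-$h$ ball (as $h\gtrsim n^{-1/d}$), so $N_{X}\le C(nh^{d})^{2}$ deterministically and $\mathrm{KL}\le C\,\delta^{2}(nh^{d})^{2}=C\,c_{0}^{2}\,n^{2}h^{2\beta+2d}=C c_{0}^{2}$ by the choice of $h$. Under the random design, the chain rule for KL (the covariate marginal is common to $f_{0}$ and $f_{1}$) gives $\mathrm{KL}(\bar P_{f_{0}}\|\bar P_{f_{1}})=\frac{|I_n|}{2\sigma^{2}}\mathbb{E}[g(X_{1},X_{2})^{2}]\le\frac{|I_n|\delta^{2}}{2\sigma^{2}}\,\mathbb{P}((X_{1},X_{2})\in\supp g)\le C u^{2}\,n^{2}\delta^{2}h^{2d}=C c_{0}^{2}$, using $m\le u$. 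In both cases, choosing $c_{0}$ small makes the total variation between the two data distributions bounded away from $1$ (Pinsker), so Le Cam's two-point inequality yields $\inf_{\hat f_n}\max_{j\in\{0,1\}}\mathbb{E}_{f_{j}}|\hat f_n(x,x')-f_{j}(x,x')|^{2}\ge c\,\delta^{2}\asymp n^{-2\beta/(\beta+d)}$, as required.

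The hardest part will be handling the lower bound \emph{uniformly in the design}. On the fixed-design side one must carefully extract the occupancy bound $|\{i:X_{i}\in B_{h}(x)\}|=O(nh^{d})$ from the packing condition in Condition~\ref{condition: design}(a); this $h^{2d}$ gain over the trivial count $|I_n|\asymp n^{2}$ is what produces the correct exponent, since the crude bound would only give the useless rate $n^{-2}$. On the random-design side one must account for the covariates being observed and shared across links, which the chain rule for KL disposes of cleanly because the two hypotheses coincide on the law of $X$. The remaining bookkeeping---checking that $g$ is symmetric, lies in $\mathcal{F}(\beta,L)$ with $\delta\asymp h^{\beta}$, and that \eqref{eq:bandwidth_regularization_conditions} is met at $h=n^{-1/(\beta+d)}$---is routine.
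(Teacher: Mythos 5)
Your proposal is correct, but it reaches the lower bound by a genuinely different route than the paper. The paper does not use a two-point Le Cam argument: it takes a one-parameter subfamily $\tilde{\mathcal{F}}=\{\eta\,\phi(\cdot/h,\cdot/h):|\eta|\le Lh^{\beta}\}$ of $\mathcal{F}(\beta,L)$, reduces the problem to estimating the scalar $\eta$, and applies the van Trees inequality (a Bayesian Cram\'er--Rao bound, following Gill and Levit) to get $\inf_{\hat\eta}\int\mathbb{E}|\eta-\hat\eta|^{2}\pi(\eta)\,d\eta\ge\{\mathbb{E}\int\mathcal{I}_{n}(\eta)\pi(\eta)\,d\eta+\mathcal{I}_{\pi}\}^{-1}$, where the Fisher information of the full sample is $\mathcal{I}_{n}(\eta)=\mathcal{I}(\mu)\sum_{i_{1}<i_{2}}\phi(X_{i_{1}}/h,X_{i_{2}}/h)^{2}$ and the prior contributes $\mathcal{I}_{\pi}\asymp h^{-2\beta}$ after rescaling. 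The crux is identical in both arguments: one must show that only $O((nh^{d})^{2})$ links carry information, which the paper establishes exactly as you do --- a packing count under Condition~3(a) and the small-ball probability $\Pr(\|X-x\|\le h)\le C_{B}'h^{d}$ under Condition~3(b) --- yielding $\sum_{i_{1}<i_{2}}\phi(X_{i_{1}}/h,X_{i_{2}}/h)^{2}\le Cn^{2}h^{2d}$ and hence the rate $(n^{2}h^{2d}+h^{-2\beta})^{-1}\asymp n^{-2\beta/(\beta+d)}$ at $h=n^{-1/(\beta+d)}$. What each approach buys: the van Trees route avoids choosing a prior-free pair of hypotheses and transfers verbatim to any conditional family satisfying the Gill--Levit regularity conditions (the paper's remark about the Bernoulli case), since only the Fisher information $\mathcal{I}(\mu)$ enters; your KL/Pinsker computation exploits Gaussianity explicitly and would need the local KL-versus-squared-mean-gap comparison redone for other $Q$. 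Conversely, your argument is more elementary and self-contained. Two harmless slips to fix in a write-up: the conditional KL should sum over $i_{1}<i_{2}$ rather than over $I_{n}$ (the outcomes are symmetric, so summing over ordered pairs double counts by a factor of $2$), and you should record that the H\"older seminorm of $g$ in its first argument is $2c_{0}$ times that of $\phi$, so $c_{0}$ must be taken small relative to $L$ and $\|\phi\|_{\mathcal{F}(\beta,1)}$. Your upper-bound step (substituting $h=n^{-1/(\beta+d)}$ into Theorem~\ref{theorem: upper bound} and checking \eqref{eq:bandwidth_regularization_conditions} with $\lambda_{n}=n^{-1}$) matches the paper.
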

The proof is presented in 
Supplement~\ref{app:proof of lower}. 
The theorem also holds for cases where the conditional distribution satisfies the conditions in Section 2 of \citet{GillandLevit(1995)}; for example, the theorem holds for cases where $Q$ is a Bernoulli distribution and $\mathcal{F}(\beta,L)$ is replaced by $\mathcal{F}(\beta,L)\cap \{f:0<l<\inf f(x,x') \le \sup f(x,x')<u<1\}$ with $0<l<u<1$.

Note that, a similar but different result on the variance of the link regression under the random design has been reported recently~\citep{graham2021minimax}, independently to this work. 
\citet{graham2021minimax} employs the probabilistic model $Y_{ij}=f(X_i,X_j)+U_i+U_j+V_{ij}$ with i.i.d.~normal random variables $U_i,V_{ij} \sim N(0,1)$ while our setting corresponds to $Y_{ij}=f(X_i,X_j)+V_{ij}$ if $Q$ is a normal distribution. The existence of the node-dependent term $U_i$ yields their asymptotically optimal minimax risk $n^{-2\beta/(2\beta+d)}$ different from ours $n^{-2\beta/(\beta+d)}$. See Supplement~\ref{app:graham} for more detailed comparison to \citet{graham2021minimax}.

\section{Conclusion}
We have discussed nonparametric link regression, demonstrating that the asymptotic variance decay rate of nonparametric link regression estimates depends on the covariate design; namely, whether the design is random or fixed.

\section*{Acknowledgement}
We would like to thank Hidetoshi Shimodaira for helpful discussions
and thank two anonymous referees for carefully reviewing the paper and giving us constructive comments. 
A. Okuno is supported by JST CREST (JPMJCR21N3) and JSPS KAKENHI (21K17718). 
K. Yano is supported by JST CREST (JPMJCR1763), JSPS KAKENHI (19K20222, 21H05205, 21K12067), and MEXT (JPJ010217).

\clearpage

\appendixpageoff
\appendixtitleoff
\renewcommand{\appendixtocname}{Supplementary material}
\begin{appendices}

\noindent {\Large \textbf{Supplementary material:}}
\begin{center}
{\Large Dependence of variance on covariate design  in nonparametric link regression} \\
\vspace{0.5em}
Akifumi Okuno and Keisuke Yano
\end{center}

This supplementary material contains the comparison to \citet{graham2021minimax}, numerical experiments, and proofs of main results, 
in Supplement~\ref{app:graham}--\ref{app:proofs}.

\section{Comparison to \citet{graham2021minimax}}
\label{app:graham}

Our work and \citet{graham2021minimax} are different in the following points:

\begin{enumerate}
    \item \textbf{Probabilistic model.} \citet{graham2021minimax} employs the probabilistic model $Y_{ij}=f(X_i,X_j)+U_i+U_j+V_{ij}$ with $U_i,V_{ij} \sim N(0,1)$, while we consider $Y_{ij} \sim Q(f(X_i,X_j))$; our model reduces to $Y_{ij}=f(X_i,X_j)+V_{ij}$ by assuming that $Q$ is a normal distribution. 
    The assumption on the existence of additive node-dependent noises depends on target real-data and makes the behaviour of the estimates quite different.
    
    \item \textbf{Kernel.} \citet{graham2021minimax} employs a kernel $\tilde{K}(z-Z)$ for the concatenated vectors $z=(x,x')$ and $Z=(X,X')$ while we employ a product kernel $K(x-X)K(x'-X')$ to enjoy the symmetry. 
    
    \item \textbf{Kernel assumptions.} \citet{graham2021minimax} further imposes the Lipschitz property $|\tilde{K}(Z)-\tilde{K}(Z')| \le L\|Z-Z'\|$, differentiablity, and the polynomial decay of the derivative $\|\partial \tilde{K}(Z)/\partial Z\| \le L' \|Z\|^{-\nu}$ for some $\nu>1$, while we impose only the boundedness, and the compactness of the support, on the kernel $K$. 
    
    \item \textbf{Regularization.} \citet{graham2021minimax} does not introduce any regularization unlike ours. 
    The regularization is needed for random design analysis, so as to prevent the denominator of the kernel smoother from being $0$.  
\end{enumerate}

Due to the above differences, evaluation of the asymptotically optimal minimax risk in \citet{graham2021minimax} is different from  ours:
\[
    \text{Graham et al.~'s:} \: n^{-2\beta/(2\beta+d)},
    \quad 
    \text{Ours:} \: n^{-2\beta/(\beta+d)}. 
\]
The different convergence rate is due to the variance. 
With our decomposition
\[
    \mathbb{V}(\hat{f}_{n,h}(x,x'))
    \le 
    \underbrace{\sup_{f \in \mathcal{F}(\beta,L)}
    \mathbb{E}(\mathbb{V}(\hat{f}_{n,h}(x,x') \mid X))}_{=\rho_2}
    +
    \underbrace{
    \sup_{f \in \mathcal{F}(\beta,L)}
    \mathbb{V}(\mathbb{E}(\hat{f}_{n,h}(x,x') \mid X))}_{=\rho_3},
\]
the term $\rho_2$ is evaluated as $\rho_2=O((nh^d)^{-2})$ in our setting (as outcomes $\{Y_{ij}\}$ are $\binom{n}{2}=O(n^2)$ independent random variables). 
However, $\rho_2=O((nh^d)^{-1})$ in Graham's setting, as $Y_{ij},Y_{ik}$ are dependent (as they share $U_i$ therein) and thus the variance is $O((nh^d)^{-1})$ by following the same calculation as $U$-statistic. Thus the term $O((nh^d)^{-1})$ is dominant in \citet{graham2021minimax}, and it determines the overall evaluation of the risk. 
Note that the above evaluation is tight: both of this study and \citet{graham2021minimax} attain minimax optimality in each setting.

\section{Numerical studies}
\label{sec:numerical_studies}

Numerical experiments are employed to examine the kernel smoother~(\ref{eq: kernel smoother}). 
For the univariate case ($d=1$) with a true link regression function given by $f(x,x')=xx'$, node covariates $\{x_i \in [0,1] \,:\, i=1,2,\ldots,n\}$ and outcomes $\{y_{i_1 i_2} \in \{0,1\} \,:\, 1 \le i_1 \ne i_2 \le n\}$ are synthetically generated in the following way:

\begin{itemize}
\item In the fixed design, $x_i=(i-1)/(n-1) \in [0,1]$ for $i=1,2,\ldots,n$; and in the random design, i.i.d. node covariates $\{x_{i}\,:\,i=1,\ldots,n\}$ are generated from a uniform distribution over $[0,1]$. 
\item Outcomes $\{y_{i_1 i_2} \,:\, 1 \le i_{1} < i_{2} \le n\}$ are generated independently from the Bernoulli distribution with the mean $f(x_{i_1},x_{i_2}) \in [0,1]$ and $y_{i_1 i_2}:=y_{i_2 i_1}$ for $1 \le i_2 < i_1 \le n$.
\end{itemize}

Consider the kernel smoother~(\ref{eq: kernel smoother}) equipped with the boxcar kernel $K(x)=\mathbbm{1}_{\|x\|_2 \le 1}$ and bandwidth $h=h_n:=n^{-1/(s+d)}$, where $d=1$, and $s \in \{0.75,1,2,3\}$. 
A regularization parameter $\lambda_n=n^{-1}$ is employed. 
Note that the true link regression function $f(x,x')=xx'$ is contained in $\mathcal{F}(1,1)$.

The histograms of the values of $\hat{f}_{n,h}(x,x')$ are calculated at a fixed query $(x,x')=(0.5,0.5)$ in the random and fixed designs using $10^4$ synthetic datasets. 
Figure~\ref{table:histograms} summarizes the results. It is noticed from the table that variances depend on the covariate design and the difference in histograms becomes smaller as $s$ decreases. 

\begin{figure}[!ht]
    \centering
    \includegraphics[scale=0.22]{legend.pdf} \\
\scalebox{0.9}{
    \begin{tabular}{M{33mm}M{33mm}M{33mm}M{33mm}}
        $s=0.75$ & $s=1$ & $s=2$ & $s=3$ \\ 
        \includegraphics[scale=0.4]{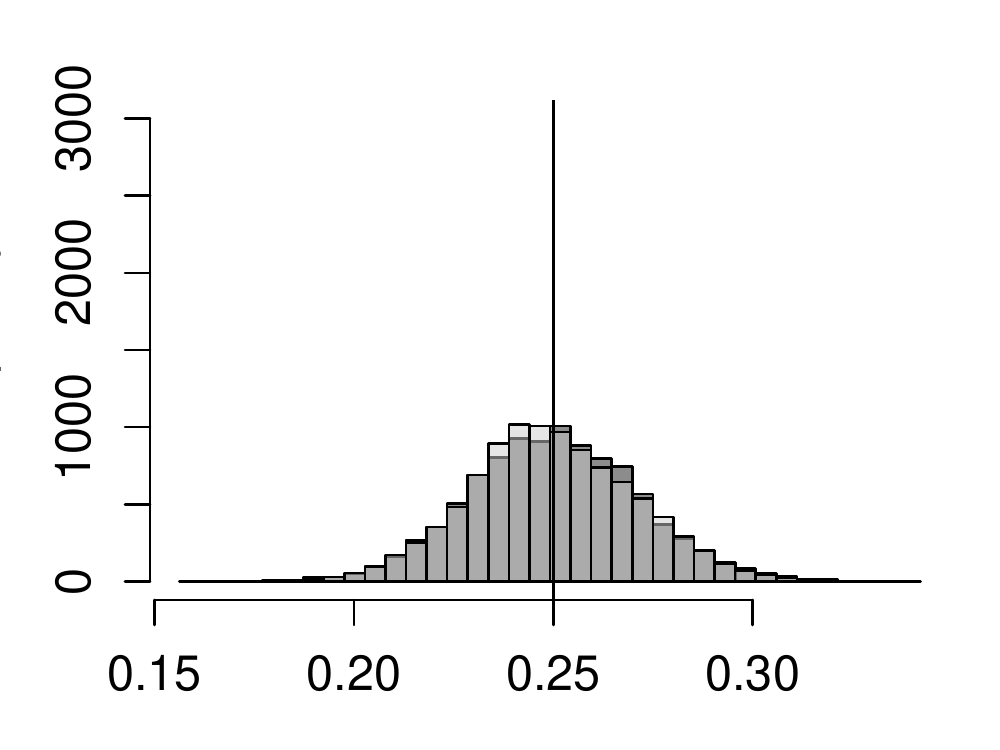} &
        \includegraphics[scale=0.4]{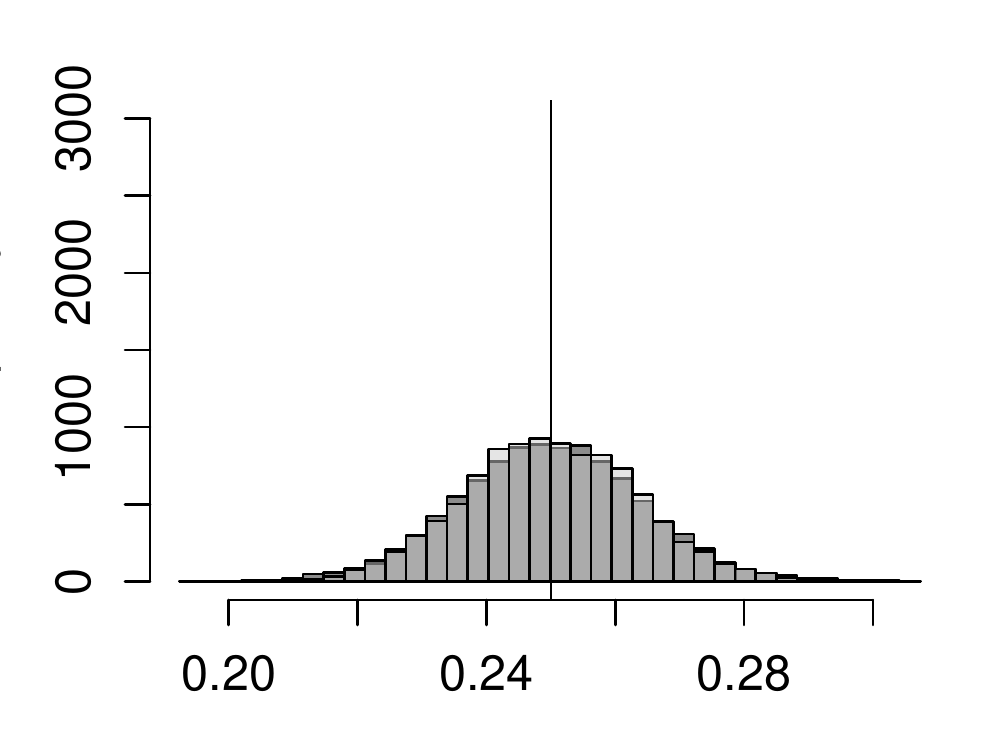} &
        \includegraphics[scale=0.4]{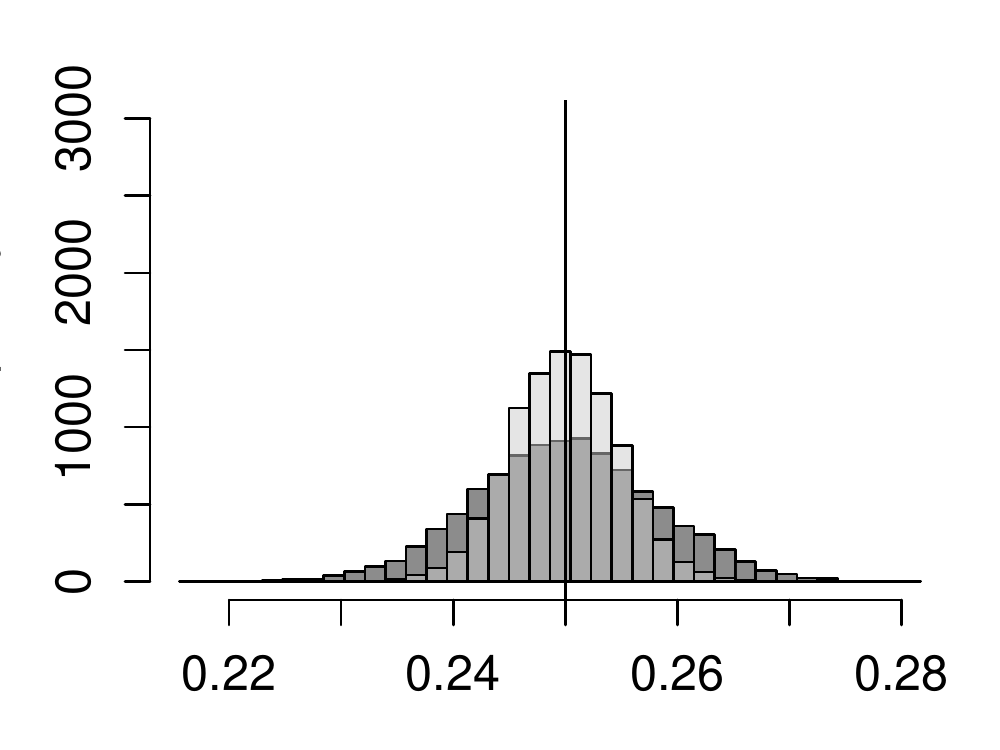} &
        \includegraphics[scale=0.4]{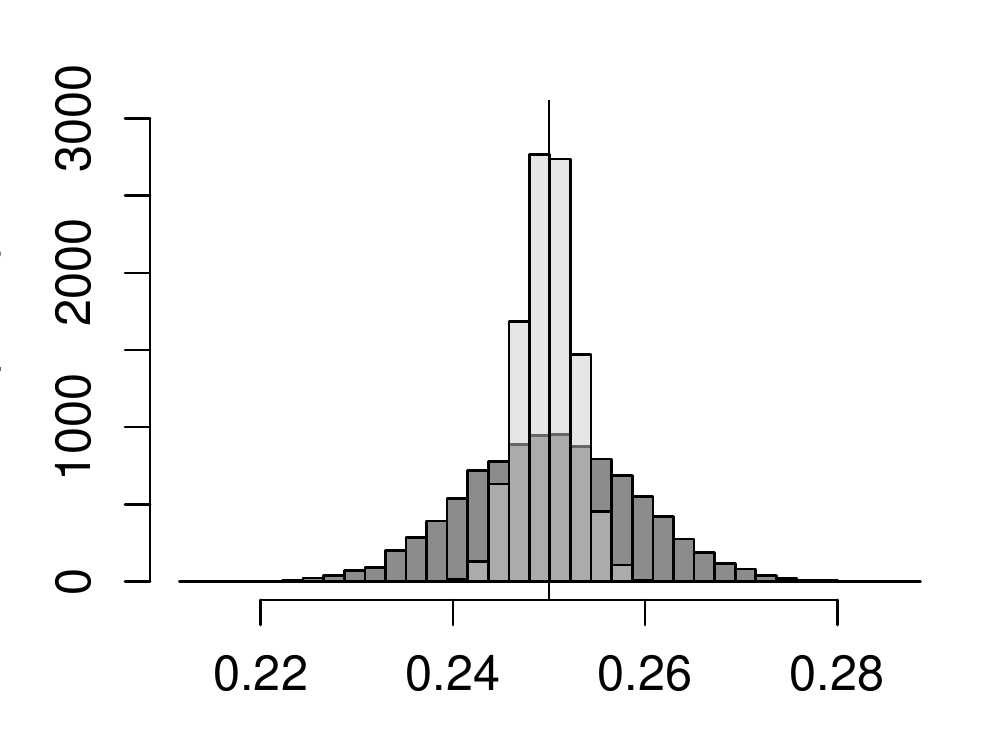} \\
    \end{tabular}
}
    \caption{Histograms of the kernel smoothers~(\ref{eq: kernel smoother}) at the query $(x,x')=(0.5,0.5)$. The bandwidth is set to $h=n^{-1/(s+d)}$ with respect to the number of nodes $n=500$. The black vertical line represents the true conditional expectation $f(0.5,0.5)=0.25$. 
    }
    \label{table:histograms}
\end{figure}

\begin{figure}[!ht]
    \centering
    \includegraphics[scale=0.22]{legend.pdf} \\
    \scalebox{0.9}{
    \begin{tabular}{M{33mm}M{33mm}M{33mm}M{33mm}}
        $s=0.75$ & $s=1$ & $s=2$ & $s=3$ \\ 
        \includegraphics[scale=0.33]{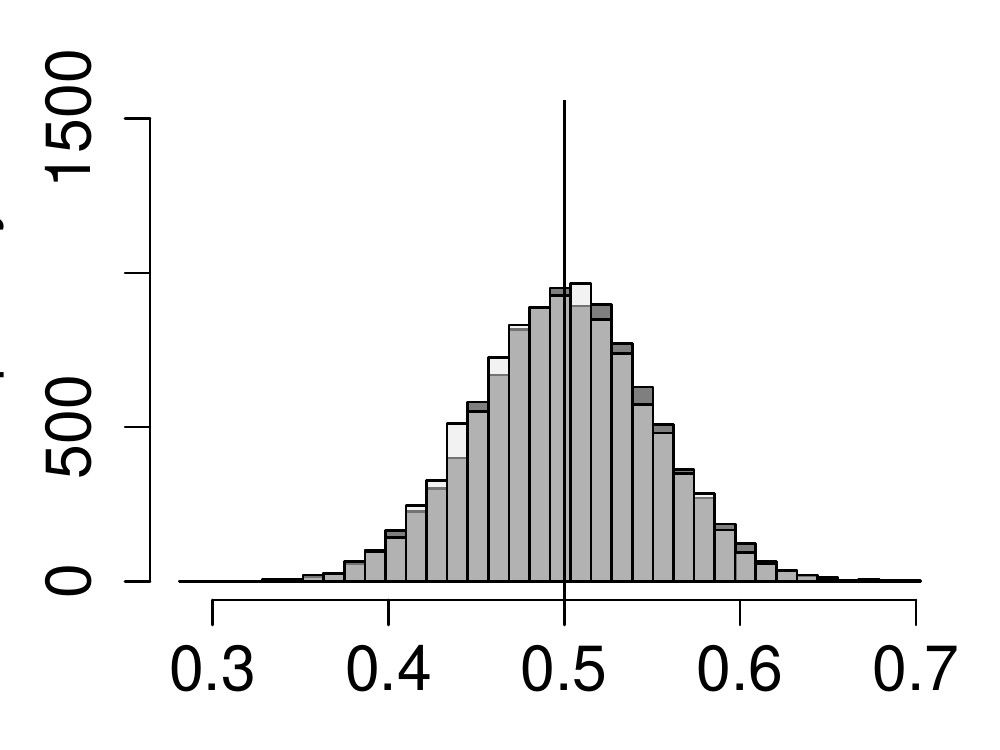} &
        \includegraphics[scale=0.33]{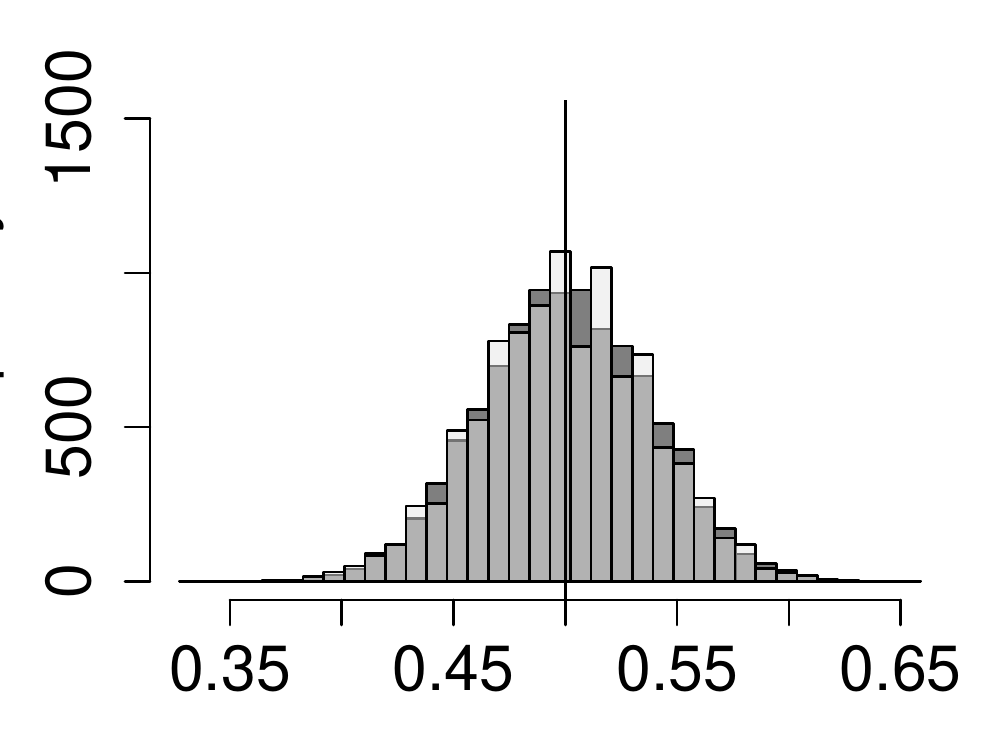} &
        \includegraphics[scale=0.33]{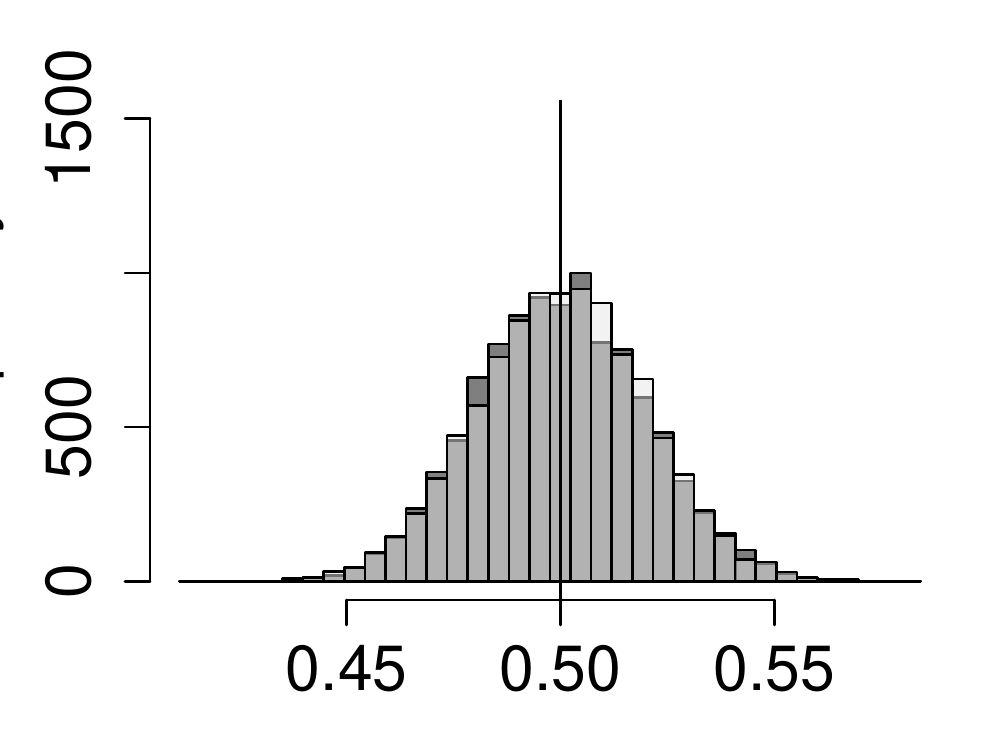} &
        \includegraphics[scale=0.33]{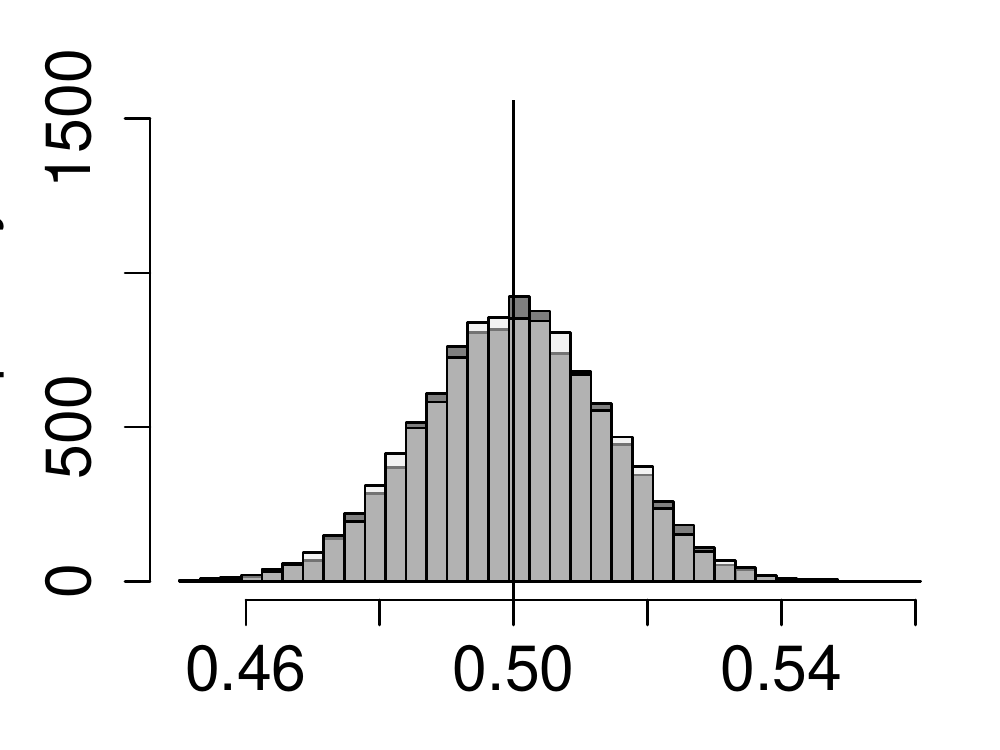} \\
    \end{tabular}
    }
    \caption{Histograms of the kernel smoother~(\ref{eq: conventional nonparametric regression}) at the query $x=0.5$. The bandwidth and the regularization parameter are $h=n^{-1/(s+d)}$, $\lambda_n=n^{-1/2}$, respectively, where $n=5000$ represents the number of covariates.}
    \label{table:histograms_conventional}
\end{figure}

Herein, we present similar experiments for nonparametric regression. 
Given covariates $\{X_i \in \mathcal{X}:i=1,\ldots,n\}$, an outcome $Y_i \in \mathcal{Y}$ independently follows a conditional distribution $Q(Y_i \mid X_i)$. For estimating the conditional mean $f(x):=\mathbb{E}[Y \mid X=x]$ at a query $x$, we define a kernel smoother
\begin{align}
    \hat{f}_{n,h}(x)
    :=
    \frac{
        n^{-1}\sum_{i=1}^{n} Y_i \{K_h(x-X_{i})+\lambda_n\}
    }{
        n^{-1}\sum_{i=1}^{n} \{K_h(x-X_{i})+\lambda_n\}
    }
    \label{eq: conventional nonparametric regression}
\end{align}
similarly to the nonparametric link regression~(\ref{eq: kernel smoother}).

Using the function $f(x)=x$ and a query $x=0.5$, the histograms of the estimator (\ref{eq: conventional nonparametric regression}) via $10^4$ times experiments are illustrated in the following Figure~\ref{table:histograms_conventional}.

\section{Proofs of main results}
\label{app:proofs}

Using supporting Lemmas (shown in Supplement~\ref{app:supporting_lemmas}), this section provides the proofs of Theorem~\ref{proposition: upper bounds of terms} (shown in Supplement~\ref{app:proof of proposition: upper bounds of terms}) and Theorem~\ref{theorem: lower bound} (shown in Supplement~\ref{app:proof of lower}). 
For proofs, we employ the following symbols:
\begin{align*}
W_{i_1,i_2}(x,x';X)
&:=
\frac{
    K_{h}(x-X_{i_{1}})K_{h}(x'-X_{i_{2}}) +\lambda_n
}{
    \sum_{(i_1,i_2) \in I_n}\{K_{h}(x-X_{i_{1}})K_{h}(x'-X_{i_{2}})+\lambda_n\}
}, \\
S_{n,h}&:=|I_n|^{-1}\sum_{(i_1,i_2) \in I_n}
\{ f(X_{i_1},X_{i_2}) - f(x,x') \}
\{ K_h(x-X_{i_1})K_h(x'-X_{i_2}) 
+\lambda_n \} , \\
T_{n,h}&:=|I_n|^{-1}\sum_{(i_1,i_2) \in I_n}\{K_h(x-X_{i_1})K_h(x'-X_{i_2})+\lambda_n\}, \\
\overline{T}_{n,h}&=\mathbb{E}(T_{n,h}), \quad \text{and }
\varepsilon_{n,h}:=\frac{1}{T_{n,h}}-\frac{1}{\overline{T}_{n,h}}.
\end{align*}
These satisfy
\begin{align}
\hat{f}_{n,h}(x,x')
&=
\sum_{(i_1,i_2) \in I_n}Y_{i_1i_2}W_{i_1,i_2}(x,x';X),
\label{eq: sum expression of f hat} \\
\sum_{(i_1,i_2) \in I_n}W_{i_1,i_2}(x,x';X)&=1, \text{ and } 
\label{eq: sum of W}  \\
\mathbb{E}(\hat{f}_{n,h}(x,x') \mid X) - f(x,x') &= \frac{S_{n,h}}{T_{n,h}}.
\label{eq: frac expression}
\end{align}

\subsection{Supporting lemmas}
\label{app:supporting_lemmas}

We begin with stating supporting lemmas used in the proofs of main results. 
Expression of variance of $U$-statistics (Lemma \ref{lemma: variance of u}); upper estimates of moments of the kernel (Lemma \ref{lemma: common bound}); the Rosenthal inequality for $U$-statistics (Lemma \ref{lemma: Rosenthal}); the small ball estimate (Lemma \ref{lemma: small ball}); 
a lower bound of the number of observed covariates in a ball (Lemma \ref{lemma: number of positive points}); 
upper bounds of moments of $T_{n,h}$ (Lemma \ref{lemma: moment of T}); 
upper bounds of moments of $\varepsilon_{n,h}$ (Lemma \ref{lemma: moment of epsilon}); An upper bound of the variance of $S_{n,h}$ (Lemma \ref{lemma:var_snh}); 
the fourth moment evaluation of $S_{n,h}$ (Lemma \ref{lemma:var_snhenh}).

For a bounded function $k:\mathcal{X}^{2}\to\R$ and an i.i.d.~sequence $\{X_{1},\ldots,X_{n}\}$, let 
\[
U_{n} := 
|I_n|^{-1}
\sum_{(i_{1},i_{2}) \in I_n} k(X_{i_1},X_{i_{2}})
=
\binom{n}{2}^{-1}\sum_{1 \le i_1 < i_2 \le n}\left\{
    \frac{k(X_{i_1},X_{i_2})+k(X_{i_2},X_{i_1})}{2}
\right\}.
\]

From the symmetric expression of $U_{n}$ (the rightmost  in the definition of $U_{n}$), we get the following expression on the variance of $U_{n}$.
\begin{lemma}[\citealp{lee1990u}]
\label{lemma: variance of u}
We have
\[\Var[U_n]
=
\frac{4(n-2)}{n(n-1)}\zeta_1
+
\frac{2}{n(n-1)}\zeta_2,
\]
where 
\begin{align*}
    \zeta_1:=\Var\left[\mathbb{E}\left[\frac{k(X,X')+k(X',X)}{2} \mid X\right]\right]\text{ and }
\zeta_2:=\Var\left[\frac{k(X,X')+k(X',X)}{2}\right].
\end{align*} 
In addition, we get 
\begin{align*}
    \zeta_1 &\le \frac{1}{2}\left[
    \Var[\mathbb{E}[k(X,X') \mid X]]
    +
    \Var[\mathbb{E}[k(X',X) \mid X]]
\right], \text{ and }\\
\zeta_2 &\le 
\Var[k(X,X')].
\end{align*}
\end{lemma}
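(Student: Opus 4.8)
The plan is to recognize $U_{n}$, via the symmetric rewriting already displayed in its definition, as a classical U-statistic of degree two with the \emph{symmetric} kernel $g(x,x'):=\{k(x,x')+k(x',x)\}/2$, and then to compute its variance directly through the covariance expansion.

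First I would write $U_{n}=\binom{n}{2}^{-1}\sum_{1\le i_1<i_2\le n} g(X_{i_1},X_{i_2})$, so that
\[
\Var[U_n]=\binom{n}{2}^{-2}\sum_{\{i_1,i_2\}}\sum_{\{j_1,j_2\}}\Cov\big(g(X_{i_1},X_{i_2}),\,g(X_{j_1},X_{j_2})\big),
\]
both sums ranging over unordered pairs of distinct indices. I would then split the terms according to $|\{i_1,i_2\}\cap\{j_1,j_2\}|\in\{0,1,2\}$. If the two pairs are disjoint the covariance vanishes by independence. If the pairs coincide the term equals $\Var[g(X_1,X_2)]=\zeta_2$, and there are $\binom{n}{2}$ such terms. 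If the pairs share exactly one index the term equals $\Cov\big(g(X_1,X_2),g(X_1,X_3)\big)$, which by conditioning on the common variable equals $\mathbb{E}\big[(\mathbb{E}[g(X_1,X_2)\mid X_1])^2\big]-(\mathbb{E}[g(X_1,X_2)])^2=\Var\big[\mathbb{E}[g(X,X')\mid X]\big]=\zeta_1$; a short count (choose the first pair, choose which of its two indices is the common one, choose the remaining index among the other $n-2$) gives $\binom{n}{2}\cdot 2(n-2)$ such terms. Substituting and using $\binom{n}{2}^{-1}=2/\{n(n-1)\}$ yields
\[
\Var[U_n]=\binom{n}{2}^{-1}\big\{2(n-2)\zeta_1+\zeta_2\big\}=\frac{4(n-2)}{n(n-1)}\,\zeta_1+\frac{2}{n(n-1)}\,\zeta_2,
\]
which is the asserted identity.

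For the two inequalities I would use only the elementary bound $\Var\big[\tfrac{1}{2}(A+B)\big]\le\tfrac12(\Var[A]+\Var[B])$, which follows from $\Cov(A,B)\le\sqrt{\Var[A]\Var[B]}\le\tfrac12(\Var[A]+\Var[B])$, together with the observation that, since $X,X'$ are i.i.d., the pair $(X,X')$ and the swapped pair $(X',X)$ have the same law. Taking $A=\mathbb{E}[k(X,X')\mid X]$ and $B=\mathbb{E}[k(X',X)\mid X]$ and noting $\mathbb{E}[g(X,X')\mid X]=\tfrac12(A+B)$ gives $\zeta_1\le\tfrac12\{\Var[\mathbb{E}[k(X,X')\mid X]]+\Var[\mathbb{E}[k(X',X)\mid X]]\}$; taking $A=k(X,X')$, $B=k(X',X)$ and using $\Var[k(X',X)]=\Var[k(X,X')]$ (distributional symmetry) gives $\zeta_2\le\Var[k(X,X')]$.

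The only mildly delicate points are purely combinatorial — the tally of index-pair overlaps and the verification that the one-overlap covariance is exactly $\zeta_1$ — so I do not anticipate a genuine obstacle: the computation is the standard Hoeffding/ANOVA variance formula for degree-two U-statistics specialized to $m=2$.
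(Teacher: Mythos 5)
Your derivation is correct: the covariance expansion over unordered index pairs, the overlap count $\binom{n}{2}\cdot 2(n-2)$, the identification of the one-overlap covariance with $\zeta_1=\Var[\mathbb{E}[g(X,X')\mid X]]$, and the two inequalities via $\Var[\tfrac12(A+B)]\le\tfrac12(\Var[A]+\Var[B])$ together with the exchangeability of $(X,X')$ all check out. The paper itself gives no proof of this lemma — it simply cites \citet{lee1990u} — and your argument is precisely the standard Hoeffding/ANOVA variance formula for a degree-two $U$-statistic that the cited reference supplies, so this is essentially the same approach.
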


We need an upper bound on the expectation of $K_{h}^{\alpha}(x-X)\norm{x-X}^{\beta}$
for $\alpha,\beta \ge 0$.
Let
\[
    \kappa_{\alpha,\beta}:=
    \|m\|_{\infty}
    \left\{
    \int_{\mathbb{R}^d} K(z)^{\alpha} \diff z 
    \right\}
    \left\{
    \sup_{z \,\in\, \text{supp}\, K} \norm{z}^{\beta}
    \right\} \quad (\text{with }0^0=0 \text{ for }\alpha=0),
\]
which is bounded since $K$ is compactly supported. 
We obtain the following Lemma~\ref{lemma: common bound}.
\begin{lemma}\label{lemma: common bound}
We have
$\mathbb{E}[K_h(x-X)^{\alpha} \norm{x-X}^{\beta} ]\le \kappa_{\alpha,\beta} h^{(1-\alpha)d+\beta}$ for any $\alpha, \beta \ge 0$. 
\end{lemma}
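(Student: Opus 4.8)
The plan is to evaluate the expectation directly as an integral against the marginal density $m$ of $X$, enlarge the domain of integration from $\mathcal{X}$ to all of $\mathbb{R}^d$ using nonnegativity of the integrand, and then reduce to a scale-free integral by the change of variables $z=(x-y)/h$. Under Condition~\ref{condition: design}(b) the quantity $\|m\|_{\infty}$ (which is at most $u<\infty$) and $\kappa_{\alpha,\beta}$ are finite because $K$ is compactly supported.

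Concretely, I would first write $\mathbb{E}[K_h(x-X)^{\alpha}\|x-X\|^{\beta}] = \int_{\mathcal{X}} K_h(x-y)^{\alpha}\|x-y\|^{\beta} m(y)\,\diff y$, bound $m(y)\le \|m\|_{\infty}$, and replace $\int_{\mathcal{X}}$ by $\int_{\mathbb{R}^d}$. Substituting $z=(x-y)/h$, so that $\diff y = h^{d}\,\diff z$, $K_h(x-y)=h^{-d}K(z)$ and $\|x-y\|=h\|z\|$, then gives
\[
\mathbb{E}\bigl[K_h(x-X)^{\alpha}\|x-X\|^{\beta}\bigr] \le \|m\|_{\infty}\, h^{(1-\alpha)d+\beta}\int_{\mathbb{R}^d} K(z)^{\alpha}\|z\|^{\beta}\,\diff z .
\]
Since $K$ is compactly supported by Condition~\ref{condition: kernel}(a), the integrand vanishes outside $\supp K$, so $\|z\|^{\beta}$ may be bounded by $\sup_{z\in\supp K}\|z\|^{\beta}$ and pulled out of the integral, leaving $\int_{\mathbb{R}^d}K(z)^{\alpha}\,\diff z$; collecting the three factors reproduces exactly the constant $\kappa_{\alpha,\beta}$ and completes the argument.

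The only point requiring a little care is the degenerate exponent $\alpha=0$: there $K_h(x-X)^0$ is read, via the convention $0^0=0$, as the indicator of $\{K((x-X)/h)>0\}$, so $\int_{\mathbb{R}^d}K(z)^0\,\diff z$ equals the Lebesgue measure of $\supp K$, which is finite precisely because $K$ has compact support, and the exponent $(1-\alpha)d+\beta$ correctly specializes to $d+\beta$. I do not expect a genuine obstacle here: the lemma follows from a single change of variables once the domain is enlarged to $\mathbb{R}^d$, and compactness of $\supp K$ is what guarantees that every quantity appearing is finite.
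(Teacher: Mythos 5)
Your proof is correct and follows essentially the same route as the paper: writing the expectation as an integral against $m$, performing the change of variables $Z=(x-X)/h$, and bounding $m$ by $\|m\|_{\infty}$ and $\|Z\|^{\beta}$ by its supremum over $\supp K$ to recover exactly $\kappa_{\alpha,\beta}h^{(1-\alpha)d+\beta}$. Your extra remark on the degenerate case $\alpha=0$ (where the convention $0^0=0$ makes $\int K(z)^0\,\diff z$ the finite Lebesgue measure of $\{K>0\}$) is a point the paper leaves implicit, but there is no substantive difference.
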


\begin{proof}
Letting $Z:=(x-X)/h$, we have 
\begin{align*}
    \mathbb{E}[K_h(x-X)^{\alpha}\norm{x-X}^{\beta}] 
%%======================
    &=
    \int_{\mathcal{X}} K_h(x-X)^{\alpha}\norm{x-X}^{\beta}m(X) \diff X \\
%%======================
    &=
    \int \{h^{-d}K(Z)\}^{\alpha}\norm{hZ}^{\beta} m(x+hZ) h^d\diff Z \\
%%======================
    &\le 
    \|m\|_{\infty}
    \left\{\int K(Z)^{\alpha} \diff Z\right\}
    \left\{\sup_{Z \,\in\, \text{supp}\,K}\norm{Z}^{\beta}\right\}
    h^{(1-\alpha) d+\beta} \\
%%======================
    &=
    \kappa_{\alpha,\beta} h^{(1-\alpha) d+\beta}. 
\end{align*}
\end{proof}

We use the following Rosenthal type estimate of the fourth moment of $U$-statistics.
\begin{lemma}\label{lemma: Rosenthal}
There exists an absolute constant $C_{R}>0$ for which we have
\begin{align*}
    &\mathbb{E}\left[\left(
    \sum_{(i_{1},i_{2})\in I_{n}}k(X_{i_{1}},X_{i_{2}})\right)^{4}\right]
    \\
    &\le 
    C_{R}
    \Bigg\{
    \Bigg(\sum_{(i_{1},i_{2})\in I_{n}}
    \mathbb{E}[k(X^{(1)}_{i_{1}},X^{(2)}_{i_{2}})]
    \Bigg)^{4}
    +
    \sum_{1 \le i_{1} \le n}\mathbb{E}\left[\Bigg( \sum_{ 1 \le i_{2} \le n} \mathbb{E}[ k(X_{i_{1}}^{(1)},X_{i_{2}}^{(2)} ) \mid X_{i_{1}}^{(1)}] \Bigg)^{4}\right]
    \\
    &\quad\quad\quad +
    \sum_{1 \le i_{2} \le n}\mathbb{E}\left[\Bigg( \sum_{1 \le i_{1} \le n} \mathbb{E}[ k(X_{i_{1}}^{(1)},X_{i_{2}}^{(2)} ) \mid X_{i_{2}}^{(2)}] \Bigg)^{4}\right]
    +
    \sum_{(i_{1},i_{2})\in I_{n}}
    \mathbb{E}[\{k(X^{(1)}_{i_{1}},X^{(2)}_{i_{2}})\}^{4}]
    \Bigg\}.
\end{align*}
\end{lemma}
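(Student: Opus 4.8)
\medskip
\noindent\textit{Proof plan.}~The inequality is the fourth-order analogue of the second-moment identity in Lemma~\ref{lemma: variance of u}, so the plan is to prove it by the same mechanism carried one order higher: \emph{decouple, Hoeffding-decompose, and expand the fourth power}. First, by a standard decoupling inequality for $U$-statistics applied to the convex function $t\mapsto t^{4}$, one has $\mathbb{E}[(\sum_{(i_1,i_2)\in I_n}k(X_{i_1},X_{i_2}))^{4}]\le C\,\mathbb{E}[(\sum_{(i_1,i_2)\in I_n}k(X_{i_1}^{(1)},X_{i_2}^{(2)}))^{4}]$ with $\{X_i^{(1)}\}$, $\{X_i^{(2)}\}$ two independent copies of the sample; this is exactly what introduces the decoupled notation on the right of the lemma, and it reduces the problem to the decoupled double sum $D:=\sum_{(i_1,i_2)\in I_n}k(X_{i_1}^{(1)},X_{i_2}^{(2)})$. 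Next I would use the Hoeffding decomposition $k=\mu+g_{1}+g_{2}+\psi$, where $\mu=\mathbb{E}[k(X^{(1)},X^{(2)})]$, $g_{1}(x)=\mathbb{E}[k(x,X^{(2)})]-\mu$, $g_{2}(y)=\mathbb{E}[k(X^{(1)},y)]-\mu$, and $\psi$ is the completely degenerate remainder with $\mathbb{E}[\psi(x,X^{(2)})]=\mathbb{E}[\psi(X^{(1)},y)]=0$. Then $D=n(n-1)\mu+(n-1)G_{1}+(n-1)G_{2}+\Psi$ with $G_{1}=\sum_{i}g_{1}(X_{i}^{(1)})$, $G_{2}=\sum_{j}g_{2}(X_{j}^{(2)})$, $\Psi=\sum_{(i_1,i_2)\in I_n}\psi(X_{i_1}^{(1)},X_{i_2}^{(2)})$, and $\mathbb{E}[D^{4}]\le 64\{(n(n-1)\mu)^{4}+(n-1)^{4}\mathbb{E}[G_{1}^{4}]+(n-1)^{4}\mathbb{E}[G_{2}^{4}]+\mathbb{E}[\Psi^{4}]\}$ by convexity.

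The first three pieces are elementary. The constant piece is literally $(\sum_{(i_1,i_2)\in I_n}\mathbb{E}[k(X_{i_1}^{(1)},X_{i_2}^{(2)})])^{4}$, the first term on the right. For $G_{1}$ (and symmetrically $G_{2}$): it is a sum of i.i.d.\ mean-zero variables, so a direct expansion gives $\mathbb{E}[G_{1}^{4}]=n\,\mathbb{E}[g_{1}^{4}]+3n(n-1)\,\mathbb{E}[g_{1}^{2}]^{2}$. Since $\sum_{i_2=1}^{n}\mathbb{E}[k(X_{i_1}^{(1)},X_{i_2}^{(2)})\mid X_{i_1}^{(1)}]=n(\mu+g_{1}(X_{i_1}^{(1)}))$, the inequality $|a|^{4}\le 8|a+b|^{4}+8|b|^{4}$ peels off the $\mu$-part and shows that $(n-1)^{4}\mathbb{E}[G_1^4]$ is controlled by the second term on the right together with the first; the symmetric argument uses the third term for $G_{2}$. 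All of this costs only absolute constants.

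The real work is the degenerate piece $\mathbb{E}[\Psi^{4}]$. Here I would condition on the first sample: given it, $\Psi=\sum_{i_2}\Theta_{i_2}$ with $\Theta_{i_2}=\sum_{i_1\ne i_2}\psi(X_{i_1}^{(1)},X_{i_2}^{(2)})$ conditionally i.i.d.\ over $i_2$ and, by degeneracy, conditionally mean zero, so the elementary fourth-moment bound yields $\mathbb{E}[\Psi^{4}\mid\text{sample }1]\le C\{n^{2}(\mathbb{E}[\Theta_{1}^{2}\mid\text{sample }1])^{2}+n\,\mathbb{E}[\Theta_{1}^{4}\mid\text{sample }1]\}$. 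Taking expectations, $\mathbb{E}[\Theta_{1}^{2}\mid\text{sample }1]$ is itself a degenerate $U$-statistic in the first sample plus a diagonal sum, and $\mathbb{E}[\Theta_{1}^{4}\mid\text{sample }1]$ decomposes similarly; iterating the same conditioning-and-expansion device routes the diagonal contributions and the ``fully collapsed'' index patterns into $\sum_{(i_1,i_2)\in I_n}\mathbb{E}[\{k(X_{i_1}^{(1)},X_{i_2}^{(2)})\}^{4}]$ and into the symmetric Hájek term on the right. The main obstacle I anticipate is the irreducible variance-squared contribution: since $\mathbb{E}[\Psi^{2}]=n(n-1)\mathbb{E}[\psi^{2}]$, Jensen forces $\mathbb{E}[\Psi^{4}]\ge n^{2}(n-1)^{2}\mathbb{E}[\psi^{2}]^{2}$, which is not visible among the four displayed quantities for a generic bounded $k$. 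To fit it in I would use $\mathbb{E}[\psi^{2}]\le\mathbb{E}[k^{2}]$ together with the standing condition $1\le nh^{d}$ from~(\ref{eq:bandwidth_regularization_conditions}), which lets $n^{2}(n-1)^{2}\mathbb{E}[\psi^{2}]^{2}$ be absorbed into $(\sum_{(i_1,i_2)\in I_n}\mathbb{E}[k])^{4}$ (equivalently, into the leading bias scale in the application). Matching powers of $n$ across the degenerate part in this way is where the bulk of the bookkeeping lies; boundedness of $k$ is needed only to make the fourth moments finite and to bound $g_{1},g_{2},\psi$ by multiples of $\|k\|_{\infty}$. Collecting the absolute constants produced at each step yields the claimed $C_{R}$.
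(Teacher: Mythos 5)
Your route is genuinely different from the paper's: the paper's entire proof is a two-line citation (decouple via de~la Pena--Montgomery-Smith, then apply the Rosenthal-type inequality (2.2'') of Gin\'e--Lata{\l}a--Zinn conditionally in two stages to the decoupled double sum), whereas you decouple and then run a Hoeffding decomposition $k=\mu+g_1+g_2+\psi$ with elementary fourth-moment expansions for each piece. The constant and H\'ajek pieces are handled correctly and match the first three terms on the right-hand side.

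The gap is exactly where you flag it, but your proposed patch does not close it. You correctly observe that $\mathbb{E}[\Psi^4]\ge n^2(n-1)^2(\mathbb{E}[\psi^2])^2$ and that this quantity is not controlled by the four displayed terms ``for a generic bounded $k$'' --- indeed, taking $k(x,y)=g(x)g(y)$ with $\mathbb{E}[g]=0$ makes every term on the right of order $n^2$ while the left side is of order $n^4$, so the lemma as literally stated is \emph{false} for signed kernels. The missing hypothesis is that $k\ge 0$, which holds in every application in the paper (the kernel fed into this lemma in Lemma~\ref{lemma:var_snhenh} is a product of non-negative factors) and is precisely what the Gin\'e--Lata{\l}a--Zinn inequality for non-negative summands requires. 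Your fix --- absorbing the term using $\mathbb{E}[\psi^2]\le\mathbb{E}[k^2]$ together with the bandwidth condition $1\le nh^d$ --- is not legitimate here: the lemma asserts an \emph{absolute} constant $C_R$ for an arbitrary bounded $k$, with no reference to $n$, $h$, or the structure of the kernel, so an application-specific absorption ``into the leading bias scale'' proves a different (weaker, context-dependent) statement. The correct absorption, once $k\ge 0$ is assumed, uses only absolute constants: by H\"older interpolation $\mathbb{E}[k^2]\le(\mathbb{E}[k])^{2/3}(\mathbb{E}[k^4])^{1/3}$, hence
\begin{align*}
n^4(\mathbb{E}[k^2])^2\le\bigl(n^8(\mathbb{E}[k])^4\bigr)^{1/3}\bigl(n^2\,\mathbb{E}[k^4]\bigr)^{2/3}\le\tfrac13\,n^8(\mathbb{E}[k])^4+\tfrac23\,n^2\,\mathbb{E}[k^4],
\end{align*}
which routes the variance-squared contribution into the first and fourth displayed terms. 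With that hypothesis added and that step substituted, your Hoeffding route goes through; without it, the argument (and the lemma) fails.
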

\begin{proof}
Combining the decoupling inequality in \cite{delaPenaandMontromery-Smith}
and the Rosenthal inequality (2.2'') in  \cite{GineLatalaZinn(2000)}
gives the desired inequality.
\end{proof}

We also use the following lower estimate of a small ball probability.

 \begin{lemma}
 \label{lemma: small ball}
 Let $X$ be a random vector from a density $m$ satisfying Condition \ref{condition: design} (b). 
 For any $x_{0}\in\mathcal{X}$ and any $h>0$, we have
 \begin{align*}
 \Pr( \|X-x_{0}\|\le h ) &\ge
 \frac{ l r^{d}\pi^{d/2}}{\Gamma(d/2+1)}
 h^{d} =: C_{B}h^{d}, \\
 \Pr( \|X-x_{0}\|\le h ) &\le
 \frac{ u r^{d}\pi^{d/2}}{\Gamma(d/2+1)}
 h^{d} =: C'_{B}h^{d},
 \end{align*}
where $\Gamma(x)$ is the Gamma function.
\end{lemma}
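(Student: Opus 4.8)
The plan is to reduce the estimate to an elementary volume computation, using only that $X$ has a density $m$ on $\mathcal{X}$ with $l\le m\le u$ (Condition \ref{condition: design} (b)) together with the (uniform) regularity of the compact set $\mathcal{X}$. Writing $B(x_0,\rho):=\{x\in\R^d:\|x-x_0\|\le\rho\}$ and recalling that its Lebesgue measure $\mathrm{Leb}(B(x_0,\rho))$ equals $\pi^{d/2}/\Gamma(d/2+1)\cdot\rho^d$, the starting identity is
\[
\Pr(\|X-x_0\|\le h)=\int_{\mathcal{X}\cap B(x_0,h)} m(x)\,\diff x .
\]
Both inequalities then come from sandwiching $m$ between $l$ and $u$ on $\mathcal{X}$ and estimating the Lebesgue measure of $\mathcal{X}\cap B(x_0,h)$.

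For the upper bound I would simply discard the intersection with $\mathcal{X}$ and use $m\le u$: since $\mathcal{X}\cap B(x_0,h)\subseteq B(x_0,h)$, monotonicity of Lebesgue measure gives $\Pr(\|X-x_0\|\le h)\le u\,\pi^{d/2}/\Gamma(d/2+1)\cdot h^d$, which is of the claimed form (after absorbing the dimensional constant into $C'_B$). For the lower bound I would use $m\ge l$, so that $\Pr(\|X-x_0\|\le h)\ge l\cdot\mathrm{Leb}(\mathcal{X}\cap B(x_0,h))$, and then bound $\mathrm{Leb}(\mathcal{X}\cap B(x_0,h))$ below by the volume $\pi^{d/2}/\Gamma(d/2+1)\cdot(rh)^d$ of a Euclidean ball of radius $rh$ contained in $\mathcal{X}\cap B(x_0,h)$; this is exactly the point at which the parameter $r$ appears. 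Granting the existence of such an interior ball, one obtains $\Pr(\|X-x_0\|\le h)\ge l\,\pi^{d/2}/\Gamma(d/2+1)\cdot r^d h^d=C_B h^d$.

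The main (and essentially only) obstacle is producing that interior ball \emph{uniformly in} $x_0\in\mathcal{X}$ for the relevant range of $h$ (recall the bandwidth is constrained to $h\le 1$): this is an interior-thickness, or uniform interior cone, property of the compact set $\mathcal{X}$, whose constant is recorded as $r$. When $x_0$ is taken to be a fixed interior point and $h$ is small, the ball $B(x_0,rh)$ itself lies in $\mathcal{X}\cap B(x_0,h)$ and the claim is immediate; this is the case actually used downstream, since Lemmas \ref{lemma: number of positive points} and \ref{lemma: moment of T} invoke the estimate only at the fixed query point $(x,x')$ and at a scale proportional to $h$. Everything else in the argument is the trivial sandwiching of the density between its uniform bounds.
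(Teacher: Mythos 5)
The paper declares this proof ``easy and omitted,'' so there is no written proof to compare against; your volume-sandwiching argument is exactly the standard computation the authors have in mind, and the upper bound part is immediate. Two caveats are worth recording. First, your reading of $r$ as an interior-thickness constant of $\mathcal{X}$ does not match the paper: there $r$ is a kernel parameter (Condition \ref{condition: kernel} (b); also $r=\sup\{\|z\|:K(z)>0\}$ in Lemma \ref{lemma: number of positive points}), and the factor $r^{d}$ in both $C_{B}$ and $C'_{B}$ is most naturally explained by the lemma actually being invoked at radius $rh$ rather than $h$ --- see the definition $v(x)=\Pr(\|X-x\|\le rh)$ in the bound on $\rho_{2}$ under Condition \ref{condition: design} (b), where the conclusion $C_{B}h^{d}\le v(x)\le C'_{B}h^{d}$ is precisely $l\cdot\mathrm{Leb}(B(x,rh))\le v(x)\le u\cdot\mathrm{Leb}(B(x,rh))$ with $B(x,\rho)$ the Euclidean ball; as literally stated with radius $h$ on the left-hand side, the upper bound would even be vacuously weaker than the trivial one only when $r\ge 1$. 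Second, you are right that the lower bound is not purely a density estimate: it needs $\mathrm{Leb}(\mathcal{X}\cap B(x_{0},\rho))\ge c\,\rho^{d}$ uniformly over $x_{0}\in\mathcal{X}$, which is a regularity property of the support that Condition \ref{condition: design} (b) does not state explicitly and which can fail for compact sets with cusps. Your observation that the estimate is only ever needed at the fixed interior query point, where the full ball eventually lies inside $\mathcal{X}$, is the correct way to discharge this; just note that the constant doing that work is a property of $\mathcal{X}$, not the kernel radius $r$.
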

The proof is easy and omitted.

In the following, the number of observed covariates considered in the nonparametric link regression is evaluated. 

\begin{lemma}
\label{lemma: number of positive points}
Under the condition~\ref{condition: design} (a) with $n^{-1/d} \le h \le 1$, 
there exists $C_K>0$ independent of $n,h$ such that
\[
    \frac{1}{|I_n|} \sum_{(i_1,i_2) \in I_n} K_h(x-X_{i_1})K_h(x'-X_{i_2}) \ge C_K.
\]
\end{lemma}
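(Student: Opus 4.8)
The plan is to combine the lower bound on the kernel near the origin from Condition~\ref{condition: kernel}(b) with the near-equispacedness of the design in Condition~\ref{condition: design}(a), using the latter to guarantee that a number of covariates of order $nh^{d}$ lands in each of the balls $B(x,rh)$ and $B(x',rh)$, where $r>0$ is the radius appearing in Condition~\ref{condition: kernel}(b).

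First I would reduce the sum to a counting problem. Put $A:=\{i:\|X_{i}-x\|\le rh\}$ and $A':=\{i:\|X_{i}-x'\|\le rh\}$. For $i\in A$ we have $\|(x-X_{i})/h\|\le r$, so $K_{h}(x-X_{i})=h^{-d}K((x-X_{i})/h)\ge h^{-d}\underk K_{\mathrm{max}}$ by Condition~\ref{condition: kernel}(b), and similarly for $i\in A'$; recall $K_{\mathrm{max}}=\sup_{z}K(z)$. Since $K\ge 0$, dropping all other terms and keeping only the pairs with $i_{1}\in A$, $i_{2}\in A'$, and noticing that the only ordered pairs with $i_{1}\in A$, $i_{2}\in A'$ that are excluded from $I_{n}$ are the diagonal ones $i_{1}=i_{2}\in A\cap A'$ (at most $\min(|A|,|A'|)$ of them), we obtain
\begin{align*}
\sum_{(i_{1},i_{2})\in I_{n}}K_{h}(x-X_{i_{1}})K_{h}(x'-X_{i_{2}})
\ge (\underk K_{\mathrm{max}})^{2}h^{-2d}\bigl(|A|\,|A'|-\min(|A|,|A'|)\bigr).
\end{align*}

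The crux is then to show $|A|\ge c_{1}nh^{d}$ and $|A'|\ge c_{1}nh^{d}$ for a constant $c_{1}>0$ depending only on $d$, $r$, $C_{X}$ and the geometry of $\mathcal{X}$. For this I would use that, by Condition~\ref{condition: design}(a), the covariates form a mesh of size of order $n^{-1/d}$, so a ball of radius $rh\ge rn^{-1/d}$ around $x$ contains a family of order $(rh\,n^{1/d})^{d}=r^{d}nh^{d}$ pairwise disjoint sub-balls of radius comparable to the mesh, each of which must contain at least one $X_{i}$; this gives the lower bound, while the separation bound $\min_{i\ne j}\|X_{i}-X_{j}\|\ge c_{X}n^{-1/d}$ gives the matching upper bound $|A|=O(nh^{d})$ by a volume count. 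The regime in which $h$ is of order $n^{-1/d}$, where $nh^{d}$ is merely bounded, is handled by a direct appeal to near-equispacedness. This is the only substantive ingredient, and it is exactly where the fixed-design hypothesis is used.

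Finally I would assemble the pieces. Since $n^{-1/d}\le h$ forces $nh^{d}\ge 1$, we have $|A|\,|A'|\ge c_{1}^{2}n^{2}h^{2d}$ and $\min(|A|,|A'|)\le\sqrt{|A|\,|A'|}$, so $|A|\,|A'|-\min(|A|,|A'|)\ge\tfrac12|A|\,|A'|$ once the constants are such that $\sqrt{|A|\,|A'|}\ge 2$ (which, if needed, is absorbed into the direct argument of the previous paragraph). Dividing by $|I_{n}|=n(n-1)$ then gives
\begin{align*}
\frac{1}{|I_{n}|}\sum_{(i_{1},i_{2})\in I_{n}}K_{h}(x-X_{i_{1}})K_{h}(x'-X_{i_{2}})
\ge \frac{(\underk K_{\mathrm{max}})^{2}h^{-2d}\cdot\tfrac12 c_{1}^{2}n^{2}h^{2d}}{n(n-1)}
\ge \tfrac12 c_{1}^{2}(\underk K_{\mathrm{max}})^{2}=:C_{K},
\end{align*}
which depends on none of $n$, $h$, the query, or the design beyond the fixed constants, as required. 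The main obstacle is the covariate count $|A|\ge c_{1}nh^{d}$ from Condition~\ref{condition: design}(a); everything else is bookkeeping.
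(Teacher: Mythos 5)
Your proposal follows essentially the same route as the paper's proof: restrict the double sum to covariates lying in the balls of radius $rh$ around $x$ and $x'$, lower-bound the kernel there by $\underk K_{\mathrm{max}}h^{-d}$ via Condition~\ref{condition: kernel}(b), and use the near-equispacedness of Condition~\ref{condition: design}(a) to show each ball contains of order $nh^{d}$ covariates. Your bookkeeping of the excluded diagonal pairs is in fact slightly more explicit than the paper's, but the substance is identical, so there is nothing further to add.
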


\begin{proof}
Let $r:=\sup\{\|x\|_2 \mid K(x)>0\}$, and let 
\[
    \mathcal{P}_{n,h}(x):=\left\{i \in [n] \mid \left\| \frac{x-X_i}{h} \right\| \le r\right\};
\]
together with the inequality $\max_i \min_j \|X_i-X_j\| \le C_X/n^{1/d}$, its cardinality is lower-bounded by
\begin{align*}
    |\mathcal{P}_{n,h}(x)|
    &\ge 
    \Bigg|
    \left\{
        t=(t_1,t_2,\ldots,t_d) \in \mathbb{Z}^d \setminus \{\mathbf{0}\}
        \, \mid \,
        \left\|\frac{(C_X/n^{1/d})t}{h}\right\| \le r
    \right\}
    \Bigg| \\
%%============================
    &\ge 
    \Bigg|
    \left\{
        t=(t_1,t_2,\ldots,t_d) \in \mathbb{Z}^d \setminus \{\mathbf{0}\}
        \, \mid \,
        \left\|t\right\| \le \frac{r}{C_X}n^{1/d}h
    \right\}
    \Bigg| \\
%%===========================
    &\ge 
    C'(hn^{1/d})^d \ge C'nh^d
\end{align*}
for some $C'>0$. Therefore, we get
\begin{align*}
    |I_n|^{-1}\sum_{(i_1,i_2) \in I_n} &K_h(x-X_{i_1}) K_h(x'-X_{i_2}) \\
    &= 
    |I_n|^{-1}\sum_{(i_1,i_2) \in I_n \cap \{P_{n,h}(x) \times P_{n,h}(x')\}} 
    K_h(x-X_{i_1}) K_h(x'-X_{i_2}) \\
%%=====================================================
    &= 
    |I_n|^{-1}\sum_{(i_1,i_2) \in I_n \cap \{P_{n,h}(x) \times P_{n,h}(x')\}} 
    h^{-d}K\left( \frac{x-X_{i_1}}{h} \right) h^{-d}K\left(\frac{x'-X_{i_2}}{h} \right) \\
%%====================================================
    &=
    \frac{|I_n \cap \{P_{n,h}(x) \times P_{n,h}(x')\}|}{|I_n|} 
    h^{-2d} (\underline{k} K_{\max})^2 \\
%%=====================================================
    &\ge 
    \frac{\min\left\{\binom{|\mathcal{P}_{n,h}(x)|}{2},\binom{|\mathcal{P}_{n,h}(x')|}{2}\right\}}{\binom{n}{2}} h^{-2d} (\underline{k}K_{\max})^2 \\
    &\ge 
    C'' \frac{(C'nh^d)^2}{n^2} h^{-2d} (\underline{k}K_{\max})^2 \quad\text{for some }\,C''\\
%%=======================================================
    &= C'' (C' \underline{k}K_{\max})^2
    =:
    C_K,
\end{align*}
which proves the assertion. 
\end{proof}

The $q$-th moment of $T_{n,h}$ is evaluated as follows. 

\begin{lemma}
\label{lemma: moment of T}
Let $q \in \mathbb{N}$. Provided that $h \le 1$, 
there exist $C_{T,1},C_{T,2} \ge 0$ independent of $n$ and $h$ such that 
\[
    \mathbb{E}(|T_{n,h}-\overline{T}_{n,h}|^q) \le C_{T,1} /(nh^d)^q
    +
    C_{T,2} / (nh^d)^{2q-2}.
\]
\end{lemma}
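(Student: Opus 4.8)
The plan is to treat $T_{n,h}-\overline T_{n,h}$ as a centered $U$-statistic of degree two. Writing $k(X_{i_1},X_{i_2}):=K_h(x-X_{i_1})K_h(x'-X_{i_2})+\lambda_n$, we have $T_{n,h}=|I_n|^{-1}\sum_{(i_1,i_2)\in I_n}k(X_{i_1},X_{i_2})$, so that $T_{n,h}-\overline T_{n,h}$ is a $U$-statistic with kernel $\bar k:=\tfrac12\{k(X_{i_1},X_{i_2})+k(X_{i_2},X_{i_1})\}$ centered at its mean. Since $\lambda_n$ contributes a nonrandom constant, it drops out of all central moments, and we may as well replace $k$ by $K_h(x-X_{i_1})K_h(x'-X_{i_2})$ throughout the moment bound. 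The first step is therefore to reduce to bounding $\mathbb{E}(|U_n-\mathbb{E}U_n|^q)$ for $U_n=|I_n|^{-1}\sum_{(i_1,i_2)\in I_n}K_h(x-X_{i_1})K_h(x'-X_{i_2})$.

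Next I would dispose of the even/odd split in a standard way: for even $q$ we bound directly; for odd $q$ we use $\mathbb{E}(|U_n-\mathbb{E}U_n|^q)\le (\mathbb{E}(U_n-\mathbb{E}U_n)^{q-1})^{1/2}(\mathbb{E}(U_n-\mathbb{E}U_n)^{q+1})^{1/2}$ by Cauchy--Schwarz, reducing everything to even integer moments, and then it suffices to treat general even $q$ by iterating the Rosenthal-type bound of Lemma~\ref{lemma: Rosenthal} (stated there for $q=4$, and the analogue for general even $q$ follows from the same decoupling-plus-Rosenthal argument; for the present Lemma only $q\le 4$ is actually needed in the downstream proofs, so one could also just invoke Lemmas~\ref{lemma: variance of u} and \ref{lemma: Rosenthal} for $q=2$ and $q=4$ and interpolate). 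For the second moment, Lemma~\ref{lemma: variance of u} gives $\Var[U_n]\le \tfrac{4}{n}\zeta_1+\tfrac{2}{n(n-1)}\zeta_2$ with $\zeta_1\le \Var[\mathbb{E}[K_h(x-X)K_h(x'-X')\mid X]]$ and $\zeta_2\le\Var[K_h(x-X)K_h(x'-X')]$. By independence of $X$ and $X'$, $\mathbb{E}[K_h(x-X)K_h(x'-X')\mid X]=K_h(x-X)\,\mathbb{E}[K_h(x'-X')]$, so Lemma~\ref{lemma: common bound} with $(\alpha,\beta)=(2,0)$ and $(1,0)$ gives $\zeta_1\le \mathbb{E}[K_h(x-X)^2](\mathbb{E}K_h(x'-X'))^2\le \kappa_{2,0}\kappa_{1,0}^2\,h^{-d}$, while $\zeta_2\le \mathbb{E}[K_h(x-X)^2K_h(x'-X')^2]\le \kappa_{2,0}^2\,h^{-2d}$. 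Hence $\Var[U_n]\le C_{T,1}/(nh^d)\cdot(nh^d)^{-1}\cdot n h^d$— more precisely $\tfrac4n\zeta_1\lesssim h^{-d}/n=(nh^d)^{-1}h^{-2d}\cdot h^{2d}$, which one organizes so that the two terms $\zeta_1$ and $\zeta_2$ produce exactly $(nh^d)^{-2}$ and $(nh^d)^{-2\cdot 2+2}=(nh^d)^{-2}$-type contributions for $q=2$, i.e. $\Var[U_n]\le C_{T,1}(nh^d)^{-2}+C_{T,2}(nh^d)^{-2}$, matching the claim since $2q-2=q=2$.

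For $q=4$, I would apply Lemma~\ref{lemma: Rosenthal} to $\sum_{(i_1,i_2)\in I_n}(K_h(x-X_{i_1})K_h(x'-X_{i_2})-\mathbb{E}[\cdots])$ (so the leading ``$(\sum\mathbb{E})^4$'' term vanishes after centering), and estimate the three remaining blocks: the degenerate diagonal block $\sum_{(i_1,i_2)\in I_n}\mathbb{E}[k^4]\lesssim n^2\cdot h^{-6d}=n^2h^{-8d}\cdot h^{2d}$, which after dividing by $|I_n|^4\asymp n^8$ gives $\lesssim n^{-6}h^{-6d}=(nh^d)^{-6}\asymp(nh^d)^{-2(4)+2}$; and the two Hoeffding-projection blocks $\sum_{i_1}\mathbb{E}[(\sum_{i_2}\mathbb{E}[k\mid X_{i_1}])^4]$, which by $\mathbb{E}[k\mid X_{i_1}]=K_h(x-X_{i_1})\mathbb{E}K_h(x'-X')\lesssim K_h(x-X_{i_1})$ and independence reduce to $n\cdot\mathbb{E}[(nK_h(x-X))^4]\lesssim n^5\mathbb{E}[K_h(x-X)^4]\lesssim n^5 h^{-3d}$, and dividing by $n^8$ gives $\lesssim n^{-3}h^{-3d}=(nh^d)^{-3}\le(nh^d)^{-4}+\text{(lower-order)}$; combining, after collecting powers, yields a bound of the form $C_{T,1}(nh^d)^{-4}+C_{T,2}(nh^d)^{-6}$, i.e. the claim with $q=4$, $2q-2=6$. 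All kernel moments here are finite and controlled by the constants $\kappa_{\alpha,0}$ via Lemma~\ref{lemma: common bound}, using only $h\le 1$ (so that e.g. $h^{2\beta}\le 1$ and positive powers of $h$ may be dropped).

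The main obstacle I anticipate is purely bookkeeping: making sure the two error orders $(nh^d)^{-q}$ and $(nh^d)^{-(2q-2)}$ emerge \emph{exactly} (not, say, $(nh^d)^{-q}$ and $(nh^d)^{-(2q-1)}$) when the Hoeffding-projection and diagonal terms of the Rosenthal bound are normalized by $|I_n|^{-q}\asymp n^{-2q}$ and the kernel moments $\mathbb{E}[K_h^{j}(x-X)]\asymp h^{-(j-1)d}$ are substituted; this is a careful exponent count rather than a conceptual difficulty. A minor secondary point is the reduction from general $q$ (or odd $q$) to even $q$ and the statement of the general-order Rosenthal inequality, which can be handled either by Cauchy--Schwarz interpolation as above or by noting that only $q\in\{1,2,3,4\}$ is ever used downstream.
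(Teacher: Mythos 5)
Your overall strategy is the same as the paper's in spirit (write $T_{n,h}-\overline{T}_{n,h}$ as a centered degree-two $U$-statistic, bound kernel moments via Lemma~\ref{lemma: common bound}, and apply a Rosenthal-type inequality), but there are two genuine gaps. First, the reduction to $q\le 4$ is not available: Lemma~\ref{lemma: moment of epsilon} invokes Lemma~\ref{lemma: moment of T} for moments of order $q+\nu$ with $\nu=\nu(q)$ chosen large enough that $\lambda_n^{q}(nh^d)^{\nu}\ge 1$, and since $\lambda_n$ is only assumed to satisfy $\lambda_n\ge (nh^d)^{-\nu_0}$ for some unspecified $\nu_0>0$, arbitrarily high moments are needed. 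The paper handles general $q$ in one stroke by citing Lemma~2.1 of Fu (2011), a moment inequality for $U$-statistics of the form $C_1 n^{-q}\,\mathbb{E}(|k|^2)^{q/2}+C_2\,n^{2-2q}\,\mathbb{E}(|k|^q)$, rather than by piecing together $q=2$ and $q=4$ and interpolating; your "iterate the Rosenthal bound for general even $q$" is left entirely unexecuted.

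Second, and more seriously, the step you dismiss as "purely bookkeeping" is exactly where your argument breaks. The Hájek projection of the product kernel is $\mathbb{E}[K_h(x-X)K_h(x'-X')\mid X]=K_h(x-X)\,\mathbb{E}[K_h(x'-X')]$, whose variance is of order $h^{-d}$ (not $O(1)$), so your own computation gives $\tfrac{4}{n}\zeta_1\asymp (nh^d)^{-1}$ for $q=2$ and $(nh^d)^{-3}$ for the projection blocks at $q=4$. In the operative regime $nh^d\ge 1$ (guaranteed by $h\ge n^{-1/d}$ wherever the lemma is used), neither quantity is dominated by the claimed $(nh^d)^{-q}+(nh^d)^{-(2q-2)}$: the inequalities you assert, namely that $(nh^d)^{-1}$ "produces a $(nh^d)^{-2}$-type contribution" and that $(nh^d)^{-3}\le (nh^d)^{-4}+(\text{lower order})$, go in the wrong direction there. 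This is not a matter of careful exponent counting: the non-degenerate linear part of the $U$-statistic genuinely contributes terms of order $(nh^d)^{-q/2}$, which cannot be absorbed into the stated bound. The paper's proof sidesteps this only because the cited inequality of Fu (2011) is of the degenerate-$U$-statistic form (both of its terms carry the $n^{-q}$ and $n^{2-2q}$ normalizations, with no $n^{-q/2}$ linear-part term); if you want to reproduce the lemma as stated, you must either invoke that inequality as the paper does or explicitly argue why the Hájek projection contribution is absent, and your Rosenthal-based decomposition, taken at face value, shows that it is not.
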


\begin{proof}
With $\mathbb{E}(K_h(x-X))=\kappa_{1,0}=O(1)$ and 
$k(X,X'):=K_h(x-X)K_h(x'-X')-\kappa_{1,0}^2$ satisfying
\[
    T_{n,h}-\overline{T}_{n,h} = |I_n|^{-1}\sum_{(i_1,i_2) \in I_n} k(X_{i_1},X_{i_2})
    \quad \text{and} \quad 
    \mathbb{E}(k(X,X'))=0,
\]
we have 
\begin{align*}
    \mathbb{E}(|k(X,X')|^q)
    &=
    \mathbb{E}\left(
        \sum_{j=0}^q (-1)^j \binom{q}{j} K_h(x-X)^j K_h(x'-X')^j (\kappa_{1,0}^2)^{q-j}
    \right) \\
    &\le 
    \sum_{j=0}^q
    \binom{q}{j}
    \kappa_{1,0}^{2(q-j)}
    \mathbb{E}(K_h(x-X)^j)
    \mathbb{E}(K_h(x'-X')^j) \\
    &\le 
    \sum_{j=0}^q
    \binom{q}{j}
    \kappa_{1,0}^{2(q-j)}
    \kappa_{j,0} h^{-d(j-1)} \kappa_{j,0} h^{-d(j-1)} \\
    &= 
    \left\{
        1
        + 
        \sum_{j=0}^{q-1} \binom{q}{j} \kappa_{1,0}^{2(q-j)}
        \kappa_{j,0}^2 h^{2d(q-j)}
    \right\}
    h^{-2d(q-1)} \\
&\le 
    C_{T}' h^{-2d(q-1)},
\end{align*}
with $C_{T}':=1+(q-1)\max_j \{\binom{q}{j} \kappa_{1,0}^{2(q-j)} \kappa_{j,0}^2\}$. 
Applying 
Lemma~2.1 of \citet{fu2011exact}  to $k(X,X')$ yields
\begin{align*}
    \mathbb{E}(|T_{n,h}-\overline{T}_{n,h}|^q)
    &\le 
    C_{T,1}' n^{-q} \mathbb{E}(|k(X,X')|^2)^{q/2} 
    +
    C_{T,2}' n^{2-2q} \mathbb{E}(|k(X,X')|^q) \\
    &\le 
    C_{T,1} n^{-q} (h^{-2d(2-1)})^{q/2}
    +
    C_{T,2} n^{2-2q} h^{-2d(q-1)} \\
    &=
    C_{T,1} /(nh^d)^q
    +
    C_{T,2} / (nh^d)^{2q-2}.
\end{align*}
\end{proof}

The $q$-th moment of $\varepsilon_{n,h}$ is also evaluated as follows. 

\begin{lemma}
\label{lemma: moment of epsilon}
Let $q \in \mathbb{N}$. 
Provided that $h \le 1$ and $1/\{\lambda_n(nh^d)^{\nu}\} \le 1$ for some $\nu > 0$, there exist $C_{\varepsilon,1},C_{\varepsilon,2}>0$ independent of $n$ and $h$ such that 
\[
    \mathbb{E}(|\varepsilon_{n,h}|^q) \le C_{\varepsilon,1} / (nh^d)^q + C_{\varepsilon,2}/(nh^d)^{2q-2}.
\]
\end{lemma}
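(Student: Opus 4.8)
The plan is to start from the identity
\[
\varepsilon_{n,h}=\frac{1}{T_{n,h}}-\frac{1}{\overline{T}_{n,h}}=\frac{\overline{T}_{n,h}-T_{n,h}}{T_{n,h}\,\overline{T}_{n,h}}
\]
and control the denominator by splitting the expectation according to whether $T_{n,h}$ stays close to its mean. We may assume $nh^d\ge 1$: otherwise the hypothesis $\lambda_n(nh^d)^{\nu}\ge 1$ forces $\lambda_n\ge 1$, so $T_{n,h}\ge\lambda_n\ge 1$ and $\overline{T}_{n,h}\ge\lambda_n\ge 1$ give $|\varepsilon_{n,h}|\le 1$ and the claim is trivial. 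Two further facts are used. First, $\overline{T}_{n,h}$ is bounded below by a positive constant $c_T$ independent of $n,h$: since $X_{i_1},X_{i_2}$ are independent for $(i_1,i_2)\in I_n$, one has $\overline{T}_{n,h}=\mathbb{E}[K_h(x-X)]\,\mathbb{E}[K_h(x'-X)]+\lambda_n$, and each of the two expectations is at least a positive constant by combining Condition~\ref{condition: kernel}~(b) (which gives $K_h(x-X)\ge\underk K_{\max}h^{-d}$ on $\{\|x-X\|\le rh\}$) with the small-ball estimate of Lemma~\ref{lemma: small ball}. Second, $T_{n,h}\ge\lambda_n$ holds deterministically, since the kernel terms in its definition are nonnegative.

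Next I split $\mathbb{E}[|\varepsilon_{n,h}|^{q}]$ over the event $A:=\{T_{n,h}\ge\overline{T}_{n,h}/2\}$ and its complement. On $A$ we have $|\varepsilon_{n,h}|\le 2|T_{n,h}-\overline{T}_{n,h}|/\overline{T}_{n,h}^{2}\le (2/c_T^{2})\,|T_{n,h}-\overline{T}_{n,h}|$, so Lemma~\ref{lemma: moment of T} gives directly
\[
\mathbb{E}\!\left[|\varepsilon_{n,h}|^{q}\mathbbm{1}_{A}\right]
\le (2/c_T^{2})^{q}\,\mathbb{E}\!\left[|T_{n,h}-\overline{T}_{n,h}|^{q}\right]
\le (2/c_T^{2})^{q}\!\left(\frac{C_{T,1}}{(nh^d)^{q}}+\frac{C_{T,2}}{(nh^d)^{2q-2}}\right),
\]
which already has the required shape. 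On $A^{c}$ I use the crude bound $|\varepsilon_{n,h}|\le 1/T_{n,h}+1/\overline{T}_{n,h}\le 1/\lambda_n+1/c_T$, together with $A^{c}\subseteq\{|T_{n,h}-\overline{T}_{n,h}|\ge c_T/2\}$; applying Markov's inequality at an even integer exponent $p$ and then Lemma~\ref{lemma: moment of T} (using $nh^d\ge1$, so that $(nh^d)^{-(2p-2)}\le(nh^d)^{-p}$ for $p\ge2$) yields $\Pr(A^{c})\le C(p)\,(nh^d)^{-p}$. Hence
\[
\mathbb{E}\!\left[|\varepsilon_{n,h}|^{q}\mathbbm{1}_{A^{c}}\right]
\le (1/\lambda_n+1/c_T)^{q}\,C(p)\,(nh^d)^{-p}
\le C(q,p)\,(nh^d)^{\nu q-p},
\]
where the last step uses $1/\lambda_n\le(nh^d)^{\nu}$ and $nh^d\ge1$. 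Taking $p$ to be any integer with $p\ge q(\nu+1)$ (and $p\ge2$) bounds the right-hand side by $C\,(nh^d)^{-q}$; adding the two contributions yields the assertion with suitable $C_{\varepsilon,1},C_{\varepsilon,2}$.

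The one genuinely delicate point is the estimate on $A^{c}$: there $T_{n,h}$ can be as small as $\lambda_n$, so the pointwise bound on $|\varepsilon_{n,h}|^{q}$ is only of order $(nh^d)^{\nu q}$, which is useless by itself. What rescues the argument is that $A^{c}$ represents a large deviation of $T_{n,h}$ away from its constant-order mean, whose probability Lemma~\ref{lemma: moment of T} makes smaller than $(nh^d)^{-p}$ for \emph{every} integer $p$; choosing $p$ large relative to $\nu q$ absorbs the blow-up. The remaining steps — the algebraic decomposition of $\varepsilon_{n,h}$, the constant lower bound on $\overline{T}_{n,h}$, and the bookkeeping of constants (which may depend on $q$ and $\nu$ but not on $n,h$) — are routine.
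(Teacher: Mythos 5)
Your proof is correct and follows essentially the same route as the paper's: the same split over the event that $T_{n,h}$ stays within a constant factor of $\overline{T}_{n,h}$, with the $\lambda_n^{-q}$ blow-up on the complementary event (where only $T_{n,h}\ge\lambda_n$ is available) absorbed by invoking Lemma~\ref{lemma: moment of T} at a sufficiently high exponent, exactly as in the paper. The only cosmetic difference is that you bound the bad-event contribution by a pointwise bound times a Markov tail estimate, whereas the paper keeps $|T_{n,h}-\overline{T}_{n,h}|^{q}$ inside the expectation and inserts the factor $|T_{n,h}-\overline{T}_{n,h}|^{\nu}/|T_{n,h}-\overline{T}_{n,h}|^{\nu}$; both devices yield the stated bound.
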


\begin{proof}

$\overline{T}_{n,h} \ge l^2 + \lambda_n$ indicate that
\begin{align}
    \mathbb{E}(|\varepsilon_{n,h}|^q)
    &=
    \mathbb{E}\bigg[
        \frac{
            |T_{n,h}-\overline{T}_{n,h}|^q
        }{
            \overline{T}_{n,h}^q T_{n,h}^q
        }
    \bigg] \nonumber \\
    &=
    \mathbb{E}\bigg[
        \frac{|
            T_{n,h}-\overline{T}_{n,h}
        |^q}{\overline{T}_{n,h}^q T_{n,h}^q}
        \mathbbm{1}\left(
            |T_{n,h}-\overline{T}_{n,h}| \le \frac{\overline{T}_{n,h}}{2}
        \right)
    \bigg] \nonumber \\
    &\hspace{6em}+
    \mathbb{E}\bigg[
        \frac{|
            T_{n,h}-\overline{T}_{n,h}
        |^q}{\overline{T}_{n,h}^q T_{n,h}^q}
        \mathbbm{1}\left(
            |T_{n,h}-\overline{T}_{n,h}| > \frac{\overline{T}_{n,h}}{2}
        \right)
    \bigg] \nonumber \\
&\le 
    \frac{1}{(l^2)^q(l^2/2)^q} \mathbb{E}\bigg[
        |
            T_{n,h}-\overline{T}_{n,h}
        |^q
    \bigg]
    +
    \frac{1}{(l^2)^q \lambda_n^q}
    \mathbb{E}\bigg[
        |
            T_{n,h}-\overline{T}_{n,h}
        |^q
        \mathbbm{1}\left(
            |T_{n,h}-\overline{T}_{n,h}| > \frac{\overline{T}_{n,h}}{2}
        \right)
    \bigg]. \label{eq:epsilon_eq1}
\end{align}
Further, for any $\nu>0$, we have
\begin{align}
\eqref{eq:epsilon_eq1} &\le 
    \frac{2^q}{l^{4q}} \mathbb{E}\bigg[
        |
            T_{n,h}-\overline{T}_{n,h}
        |^q
    \bigg]
    +
    \frac{1}{l^{2q} \lambda_n^q}
    \mathbb{E}\bigg[
        \frac{
            |T_{n,h}-\overline{T}_{n,h}|^{\nu}
        }{
            |T_{n,h}-\overline{T}_{n,h}|^{\nu}
        }
        |T_{n,h}-\overline{T}_{n,h}|^{q}
        \mathbbm{1}\left(
            |T_{n,h}-\overline{T}_{n,h}| > \frac{\overline{T}_{n,h}}{2}
        \right)
    \bigg] \nonumber \\
&\le 
    \frac{2^q}{l^{4q}} \mathbb{E}\bigg[
        \left\{
            T_{n,h}-\overline{T}_{n,h}
        \right\}^q
    \bigg]
    +
    \frac{2^{\nu}}{l^{2q+2\nu} \lambda_n^q}
    \mathbb{E}\bigg[
        (T_{n,h}-\overline{T}_{n,h})^{q+\nu}
    \bigg]. \label{eq:epsilon_eq2}
\end{align}
Therefore, by specifying sufficiently large $\nu=\nu(q)$ satisfying $\lambda_n^q (nh^d)^{\nu} \ge 1$, 
Lemma~\ref{lemma: moment of T} ensures the existence of 
$C_{\varepsilon,1}',C_{\varepsilon,2}',C_{\varepsilon,3}',C_{\varepsilon,4}'>0$ such that 
\begin{align*}
\eqref{eq:epsilon_eq2}
&\le 
    C_{\varepsilon,1}' / (nh^d)^q + C_{\varepsilon,2}'/(nh^d)^{2q-2}
    +
    C_{\varepsilon,3}' / \{\lambda_n^q (nh^d)^{q+\nu}\} + C_{\varepsilon,4}'/\{\lambda_n^q (nh^d)^{2q+2\nu-2}\} \\
&\le 
    C_{\varepsilon,1} / (nh^d)^q + C_{\varepsilon,2}/(nh^d)^{2q-2},
\end{align*}
which proves the assertion. 

\end{proof}

The variance of $S_{n,h}$ is evaluated as follows. 

\begin{lemma}\label{lemma:var_snh}
Provided that $n^{-1/d} \le h \le 1$ and $\lambda_{n} \le h^{d}$, 
there exists $C_{V,S}>0$ independent of $n$ and $h$ such that 
$\Var[S_{n,h}]
\le C_{V,S}h^{2\beta}(nh^{d})^{-1}$.
\end{lemma}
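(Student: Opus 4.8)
The plan is to recognize $S_{n,h}$ as a $U$-statistic and invoke Lemma~\ref{lemma: variance of u}. Set $k(u,v):=\{f(u,v)-f(x,x')\}\{K_h(x-u)K_h(x'-v)+\lambda_n\}$, so that $S_{n,h}=|I_n|^{-1}\sum_{(i_1,i_2)\in I_n}k(X_{i_1},X_{i_2})=U_n$ in the notation preceding Lemma~\ref{lemma: variance of u}, with $k$ bounded (for fixed $n,h$) but not symmetric. Lemma~\ref{lemma: variance of u} then yields $\Var[S_{n,h}]=\tfrac{4(n-2)}{n(n-1)}\zeta_1+\tfrac{2}{n(n-1)}\zeta_2$, together with $\zeta_1\le\tfrac12\{\Var[\mathbb{E}(k(X,X')\mid X)]+\Var[\mathbb{E}(k(X',X)\mid X)]\}$ and $\zeta_2\le\Var[k(X,X')]$. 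Since $n^{-1/d}\le h$ forces $nh^d\ge1$, it suffices to establish the two bounds $\zeta_1=O(h^{2\beta-d})$ and $\zeta_2=O(h^{2\beta-2d})$; substituting them gives $\Var[S_{n,h}]=O(n^{-1}h^{2\beta-d})+O(n^{-2}h^{2\beta-2d})=O(h^{2\beta}(nh^d)^{-1})$, as desired.

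The analytic input used throughout is the increment estimate $|f(u,v)-f(x,x')|\le L\|u-x\|^{\beta}+L\|v-x'\|^{\beta}$, which comes from a triangle inequality through $(x,v)$, the H\"older property of the first argument in Condition~\ref{condition: link regression function}(a), and the symmetry of $f$ (which transfers the H\"older control to the second argument). To bound $\zeta_2$, I would use $\Var[k(X,X')]\le\mathbb{E}[k(X,X')^2]$, apply $(a+b)^2\le2a^2+2b^2$ both to the increment and to $K_h(x-X)K_h(x'-X')+\lambda_n$, use independence of $X$ and $X'$ to factor the resulting expectations, and apply Lemma~\ref{lemma: common bound} (with $\alpha=2$ and norm-exponents $2\beta$ and $0$) to obtain the leading $O(h^{-2d+2\beta})$ contribution; the residual term is $O(\lambda_n^2)=O(h^{2d})$, which is $O(h^{2\beta-2d})$ because $h\le1$ and $\beta\le1\le d$. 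Hence $\zeta_2=O(h^{2\beta-2d})$.

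The substantive part is $\zeta_1$, the non-degenerate $U$-statistic term highlighted in the introduction. For $\mathbb{E}(k(X,X')\mid X)$, integrate out $X'$: the factor $K_h(x-X)$ comes outside, and the increment estimate together with Lemma~\ref{lemma: common bound} bounds $|\mathbb{E}_{X'}[\{f(X,X')-f(x,x')\}K_h(x'-X')]|$ by $L\kappa_{1,0}\|X-x\|^{\beta}+L\kappa_{1,\beta}h^{\beta}$; squaring, taking $\mathbb{E}_X$, and invoking Lemma~\ref{lemma: common bound} once more against $K_h(x-X)^2$ gives $O(h^{2\beta-d})$, the $\lambda_n$ contribution being $O(\lambda_n^2)=O(h^{2\beta-d})$. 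The term $\mathbb{E}(k(X',X)\mid X)$ is treated the same way, but with the two kernels crossed: integrating out $X'$ leaves $K_h(x'-X)$ outside, and the surviving $X$-dependent factor in the increment bound is $\|X-x'\|^{\beta}$, whose center is precisely $x'$, so Lemma~\ref{lemma: common bound} applies with the same exponents and again produces $O(h^{2\beta-d})$. Checking this last point — that in the crossed term the leftover kernel $K_h(x'-X)$ and the leftover distance $\|X-x'\|^{\beta}$ are centered at the same point $x'$, a consequence of the symmetry structure of the model, so that the two gains of $h^{\beta}$ genuinely multiply rather than collapse into one — is the step I expect to demand the most care; the rest is routine manipulation with Lemma~\ref{lemma: common bound}, the bound $\lambda_n\le h^d$, and $\beta\le1\le d$. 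Combining the two pieces gives $\zeta_1=O(h^{2\beta-d})$, which completes the plan.
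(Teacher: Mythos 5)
Your proposal is correct and follows essentially the same route as the paper's proof: apply the $U$-statistic variance decomposition of Lemma~\ref{lemma: variance of u} to $k(u,v)=\{f(u,v)-f(x,x')\}\{K_h(x-u)K_h(x'-v)+\lambda_n\}$, then use the H\"older increment bound and Lemma~\ref{lemma: common bound} to show $\zeta_1=O(h^{2\beta-d})$ and $\zeta_2=O(h^{2\beta-2d})$, absorbing the $\lambda_n$ terms via $\lambda_n\le h^d$ and concluding with $nh^d\ge 1$. The paper carries out the same estimates more explicitly (a four-term decomposition $\tau_1,\ldots,\tau_4$ of $k^{(1)}$ with all pairwise products tabulated), and your observation about the crossed term $\mathbb{E}[k(X',X)\mid X]$ being centered at $x'$ is exactly why the paper's ``similarly'' for $k^{(2)}$ is justified.
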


\begin{proof}

To evaluate the variance of $S_{n,h}$, we apply Lemma~\ref{lemma: variance of u} to 
\begin{align*} 
k(X,X')
&=
\{ f(X,X') - f(x,x') \}\{ K_h(x-X)K_h(x'-X') + \lambda_n\} 
\end{align*}
and bound $\zeta_1,\zeta_2$ therein as follows.

\textit{Step A: Bounding $\zeta_1$}. Letting $k^{(1)}(X):=\mathbb{E}[k(X,X') \mid X]$ and
$k^{(2)}(X):=\mathbb{E}[k(X',X) \mid X]$, we have
\begin{align*}
    \zeta_1 
    \le 
    \frac{1}{2}\left( 
        \Var[k^{(1)}(X)]
        +
        \Var[k^{(2)}(X)]
    \right)
    \le 
    \frac{1}{2}\left(
        \mathbb{E} [ \{k^{(1)}(X)\}^2 ]
        +
        \mathbb{E} [ \{k^{(2)}(X)\}^2 ]
    \right).
\end{align*}
By the inequality 
\begin{align*}
|f(X,X')-f(x,x')| &\le |f(X,X')-f(x,X')|+|f(x,X')-f(x,x')| \\
&\le L\norm{x-X}^{\beta}+L\norm{x'-X'}^{\beta},
\end{align*}
we get the following bound on $k^{(1)}(X)$:
\begin{align*}
    |k^{(1)}(X)|
    &\le
    \mathbb{E}\left[ |f(X,X')-f(x,x')|
    \{K_h(x-X)K_h(x'-X') + \lambda_n\} \mid X \right] \\
%%==========================
    &\le 
    L\mathbb{E}\left[ K_h(x-X)K_h(x'-X')\{\norm{x-X}^{\beta}+\norm{x'-X'}^{\beta}\} \mid X\right] \\
    &\hspace{1em}+
    L\lambda_n\mathbb{E}\left[\{\norm{x-X}^{\beta}+\norm{x'-X'}^{\beta}\} \mid X \right].
\end{align*}
By Lemma \ref{lemma: common bound}, 
this is further bounded from above as follows:
\begin{align*}
    |k^{(1)}(X)|
    &\le 
    L \{ \tau_1 + \tau_2 + \tau_3 + \tau_4\},
\end{align*}
where 
\begin{align*}
    \tau_1&:=\kappa_{1,0} K_h(x-X)\norm{x-X}^{\beta}, 
    &\tau_2&:=\kappa_{1,\beta} K_h(x-X) h^{\beta},\\
    \tau_3&:=\lambda_{n} \norm{x-X}^{\beta},%\lambda|I_n|^{-1} |x-X|^{\beta},
    &\tau_4&:=\lambda_{n} \kappa_{0,\beta}h^{\beta}.%\lambda|I_n|^{-1} \kappa_{0,\beta}h^{\beta}.
\end{align*}
Then we get
\[ 
\mathbb{E} [\{k^{(1)}(x,X)\}^2] \le L^2 \sum_{j_1=1}^{4}\sum_{j_2=1}^{4} \mathbb{E}[\tau_{j_1} \tau_{j_2}].
\]
Here 
by Lemma~\ref{lemma: common bound},
we bound
the diagonal components $\{\mathbb{E}[\tau_i^2] : i=1,\ldots,4\}$ as
\begin{align*}
    \mathbb{E}[\tau_1^2]
    &= \kappa_{1,0}^2 \mathbb{E}[K_h(x-X)^{2}\norm{x-X}^{2\beta}]
    &\le&
    \kappa_{1,0}^2\kappa_{2,2\beta}\,\, ( h^{-d+2\beta})
    &=&O(h^{-d+2\beta}), \\
%%======================================================
    \mathbb{E}[\tau_2^2]
    &= \kappa_{1,\beta}^2 \mathbb{E}[K_h(x-X)^{2}]\,\, h^{2\beta}
    &\le&
    \kappa_{1,\beta}^2 \kappa_{2,0}\,\, (h^{-d+2\beta})
    &=&O(h^{-d+2\beta}), \\
%%=======================================================
    \mathbb{E}[\tau_3^2]
    &= \lambda_{n}^{2} \,\, \mathbb{E}[K_h(x-X)^{0}\norm{x-X}^{2\beta}]
    &\le&
     \kappa_{0,2\beta} \,\, (\lambda_n^2 h^{d+2\beta})
    &=&O(\lambda_n^2 h^{d+2\beta}), \\
%%=======================================================
    \mathbb{E}[\tau_4^2]
    &= \lambda_{n}^{2} \,\, \kappa_{0,\beta}^2 \,\, h^{2\beta}
    &\le&
    \kappa_{0,\beta}^2 \,\,  (\lambda_n^2 h^{d+2\beta})
    &=&O(\lambda_n^2 h^{d+2\beta}).
\end{align*}
Similarly,
we bound the off-diagonal components $\{\mathbb{E}[\tau_i \tau_j] : 1\le i<j\le 4\}$ as
\begin{align*}
%%=======================================================
    \mathbb{E}[\tau_1\tau_2]
    &=
    \kappa_{1,0}\kappa_{1,\beta} \mathbb{E}[K_h(x-X)^{2}\norm{x-X}^{\beta}]\,\, h^{\beta}
    &\le&\kappa_{1,0} \kappa_{1,\beta}\kappa_{2,\beta} \,\, (h^{-d+2\beta})
    &=&O(h^{-d+2\beta}), \\
%%========================================================
    \mathbb{E}[\tau_1 \tau_3]
    &=
    \lambda_{n} \,\, \kappa_{1,0} \mathbb{E}[K_h(x-X)\norm{x-X}^{2\beta}]
    &\le&
     \kappa_{1,0}\kappa_{1,2\beta} \,\, (\lambda_n h^{2\beta})
    &=&O(\lambda_n h^{2\beta}), \\
%%========================================================
    \mathbb{E}[\tau_1 \tau_4]
    &=
     \lambda_{n}\,\, \kappa_{1,0}\kappa_{0,\beta} \mathbb{E}[K_h(x-X)\norm{x-X}^{\beta}] \,\,h^{\beta}
    &\le&
    \kappa_{1,0}\kappa_{0,\beta}\kappa_{1,\beta} \,\, (\lambda_n  h^{2\beta})
    &=&O(\lambda_n h^{2\beta}), \\
%%========================================================
    \mathbb{E}[\tau_2 \tau_3]
    &=
    \lambda_{n} \,\, \kappa_{1,\beta} \mathbb{E}[K_h(x-X)\norm{x-X}^{\beta}] \,\, h^{\beta} 
    &\le&
    \kappa_{1,\beta}\kappa_{1,\beta} \,\, (\lambda_n  h^{2\beta})
    &=&O(\lambda_n h^{2\beta}), \\
%%=========================================================
    \mathbb{E}[\tau_2 \tau_4]
    &=
    \lambda_{n}\,\, \kappa_{1,\beta}\kappa_{0,\beta} \mathbb{E}[K_h(x-X)] \,\, h^{2\beta}
    &\le&
     \kappa_{1,\beta}\kappa_{0,\beta}\kappa_{1,0} \,\, (\lambda_n h^{2\beta})
    &=&O(\lambda_n h^{2\beta}), \\
%%==========================================================
    \mathbb{E}[\tau_3 \tau_4]
    &=
    \lambda_{n}^{2}\,\, \kappa_{0,\beta} \mathbb{E}[K_h(x-X)^{0}\norm{x-X}^{\beta}] \,\, h^{\beta} 
    &\le&
    \kappa_{0,\beta} \kappa_{0,\beta} \,\, (\lambda_n^2 h^{d+2\beta})
    &=&O(\lambda_n^2 h^{d+2\beta}).
\end{align*}
These inequalities indicate that there exist positive constants $D_{1},D_{2},D_{3}$ depending only on $\{\kappa_{i,j\beta}: i=0,1, j=0,1,2\}$ for which we have
\begin{align*}
    \mathbb{E}[\{k^{(1)}(x,X)\}^2]
    &\le
    L^2 \{
        D_1 h^{-d+2\beta}
        +
        D_2 \lambda_n h^{2\beta}
        +
        D_3 \lambda_n^2 h^{d+2\beta}
    \}\\
    &\le
    L^2 h^{-d+2\beta}\{D_1+ D_2 \lambda_n h^d + D_3 \lambda_n^2 h^{2d}\} \\
    &\le 
    D h^{-d+2\beta} \text{  with $D=L^{2}(D_{1}+D_{2}+D_{3})$},
\end{align*}
where we use $\lambda_{n}\le h^{d}$ and $h\le1$. 
Similarly, 
there exists a positive constant $D'$ depending only on $L$ and $\{\kappa_{i,j\beta}: i=0,1, j=0,1,2\}$ for which we have
\[
\mathbb{E}[\{k^{(2)}(x,X)\}^2] \le D' h^{-d+2\beta}.
\]
Consequently, we obtain 
\begin{align}
    \zeta_1
    &\le 
    \frac{1}{2}\left\{
        \mathbb{E}[\{k^{(1)}(x,X)\}^2]
        +
        \mathbb{E}[\{k^{(2)}(x,X)\}^2]
    \right\} 
    =
    \frac{D+D'}{2} \,\, h^{-d+2\beta}.
    \label{eq: bound on zeta1}
\end{align}

\vspace{5mm}

\textit{Step B: Bounding $\zeta_2$}.
By the H\"{o}lder continuity of $f$, 
we have
\begin{align*}
    \zeta_2
    \le
    \Var[k(X,X')]
    &\le 
    \mathbb{E}[k(X,X')^2] \\
%%===============================
    &\le 
    \mathbb{E}\left[
        \{K_h(x-X) K_h(x'-X')+ \lambda_n \}^2
        |f(X,X')-f(x,x')|^2
    \right] \\
%%===============================
    &\le 
    \mathbb{E}\left[
        \{K_h(x-X) K_h(x'-X') + \lambda_n \}^2
        \{L\norm{x-X}^{\beta}+L\norm{x'-X'}^{\beta}\}^2
    \right] \\
%%===============================
    &\le 
    L^2\sum_{j_1=0}^2 \sum_{j_2=0}^2
    \binom{2}{j_1}\binom{2}{j_2} \xi_{j_1,j_2},
\end{align*}
where 
\begin{align*}
\xi_{k,l}
&:=
\mathbb{E}[\{K_h(x-X) K_h(x'-X')\}^{j_1} \lambda_n^{2-j_1} \norm{x-X}^{j_2\beta}\norm{x'-X'}^{(2-j_2)\beta}].
\end{align*}
Here Lemma \ref{lemma: common bound} gives
\begin{align*}
\xi_{k,l}
&=
\lambda_n^{2-j_1}
\mathbb{E}[K_h(x-X)^{j_1} \norm{x-X}^{j_2\beta}]
\mathbb{E}[K_h(x'-X')^{j_1} \norm{x'-X'}^{(2-j_2)\beta}] \\
&\le 
\lambda_n^{2-j_1} \,\,
\kappa_{j_1,j_2\beta}
\kappa_{j_1,(2-j_2)\beta}\,\, h^{(1-j_1)d+ j_{2}\beta} h^{(1-j_1)d+(2-j_{2})\beta} \\
&=
\kappa_{j_1,j_2\beta}\kappa_{j_1,(2-j_2)\beta}
\lambda_n^{2-j_1}
h^{2(1-j_1)d+2\beta} \\
&\le 
D_{4} \,\,h^{-2d+2\beta},
\end{align*}
where $D_{4}$ is a positive constant depending only on $\{\kappa_{j_1,j_2\beta}: j_{1}=0,1,2, j_{2} =0,1,2\}$.
This concludes that there exists a positive constant depending only on 
$L$ and 
$\{\kappa_{j_1,j_2\beta}: j_{1}=0,1,2, j_{2}$
for which we have
\begin{align}
\zeta_2\le D'' h^{-2d+2\beta}.
\label{eq: bound on zeta2}
\end{align}

\textit{Final step: Combining bounds on $\zeta_{1}$ and $\zeta_{2}$}.
Combining (\ref{eq: bound on zeta1}) and (\ref{eq: bound on zeta2}) yields 
\begin{align*}
    \Var[S_{n,h}]
    &=
    \frac{4(n-2)}{n(n-1)}\zeta_1 + \frac{2}{n(n-1)}\zeta_2
    &\le&
    2(D+D')
    h^{2\beta} (nh^{d})^{-1}
    \,\, + \,\, 4D'' h^{2\beta}(nh^{d})^{-2}\\
    & 
    &\le& (2D+2D'+4D'')
        h^{2\beta} (nh^d)^{-1}
        (1+(nh^d)^{-1})
\end{align*}
which completes the proof.
\end{proof}

We last evaluate the fourth moment of $S_{n,h}$ as follows. 

\begin{lemma}\label{lemma:var_snhenh} 
Provided that $n^{-1/d} \le h \le 1$, 
% we have $\Var[S_{n,h}\varepsilon_{n,h}]=O(h^{2\beta}(nh^{d})^{-1})$. 
there exists $C_{E,S}>0$ independent of $n$ and $h$ such that $\mathbb{E}(S_{n,h}^4) \le C_{E,S}h^{4\beta}$.
\end{lemma}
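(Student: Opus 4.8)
The plan is to write $S_{n,h}=|I_n|^{-1}\sum_{(i_1,i_2)\in I_n}k(X_{i_1},X_{i_2})$ with the kernel
\[
k(X,X'):=\{f(X,X')-f(x,x')\}\{K_h(x-X)K_h(x'-X')+\lambda_n\},
\]
and to apply the Rosenthal-type bound of Lemma~\ref{lemma: Rosenthal} to the (unnormalized) $U$-statistic $\sum_{(i_1,i_2)\in I_n}k(X_{i_1},X_{i_2})$. This produces four contributions: a ``mean'' term $(\sum_{(i_1,i_2)\in I_n}\mathbb{E}[k])^4$; two ``Haj\'ek-projection'' terms built from $k^{(1)}(X):=\mathbb{E}[k(X,X')\mid X]$ and $k^{(2)}(X):=\mathbb{E}[k(X',X)\mid X]$; and a ``diagonal'' term $\sum_{(i_1,i_2)\in I_n}\mathbb{E}[k^4]$. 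I will show each is $O(n^8 h^{4\beta})$, so that dividing by $|I_n|^4\asymp n^8$ yields $\mathbb{E}(S_{n,h}^4)=O(h^{4\beta})$.

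For the mean term, the H\"older bound $|f(X,X')-f(x,x')|\le L\norm{x-X}^{\beta}+L\norm{x'-X'}^{\beta}$, the product structure of $k$, and Lemma~\ref{lemma: common bound} (exactly as in the bound on $k^{(1)}$ in the proof of Lemma~\ref{lemma:var_snh}, using $\lambda_n\le h^d\le 1$) give $|\mathbb{E}[k(X,X')]|\le C h^{\beta}$, hence $(\sum_{(i_1,i_2)\in I_n}\mathbb{E}[k])^4\le(|I_n|\,Ch^{\beta})^4=O(n^8 h^{4\beta})$. For the projection terms, I reuse the decomposition $|k^{(1)}(X)|\le L(\tau_1+\tau_2+\tau_3+\tau_4)$ from Step~A of the proof of Lemma~\ref{lemma:var_snh}, raise it to the fourth power, and apply Lemma~\ref{lemma: common bound} term by term; the dominant contribution is $\mathbb{E}[\tau_1^4]=\kappa_{1,0}^4\,\mathbb{E}[K_h(x-X)^4\norm{x-X}^{4\beta}]\le\kappa_{1,0}^4\kappa_{4,4\beta}h^{-3d+4\beta}$, while the $\lambda_n$-terms are smaller since $\lambda_n\le h^d\le 1$, so $\mathbb{E}[|k^{(1)}(X)|^4]=O(h^{-3d+4\beta})$, and identically $\mathbb{E}[|k^{(2)}(X)|^4]=O(h^{-3d+4\beta})$. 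Because the decoupled copies are i.i.d., the inner sum over $i_2$ in Lemma~\ref{lemma: Rosenthal} equals $n\,k^{(1)}(X_{i_1})$, so the second term is $\le n\cdot n^4\,\mathbb{E}[|k^{(1)}(X)|^4]=O(n^5 h^{-3d+4\beta})$, and the bandwidth restriction $n^{-1/d}\le h$, i.e.\ $h^{-d}\le n$, forces $h^{-3d}\le n^3$, making this term $O(n^8 h^{4\beta})$; the third term is treated the same way. For the diagonal term, the same H\"older/product estimate gives $\mathbb{E}[k(X,X')^4]=O(h^{-6d+4\beta})$ (the dominant factor being $\mathbb{E}[K_h(x-X)^4\norm{x-X}^{4\beta}]\,\mathbb{E}[K_h(x'-X')^4]$), so $\sum_{(i_1,i_2)\in I_n}\mathbb{E}[k^4]=O(n^2 h^{-6d+4\beta})$, and now $h^{-6d}\le n^6$ again yields $O(n^8 h^{4\beta})$. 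Summing the four contributions and normalizing completes the argument.

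The main obstacle is purely bookkeeping: one must track the exact power of $h$ generated by each of the four Rosenthal terms and check that the condition $n^{-1/d}\le h$ converts the ``bad'' powers $h^{-3d}$ and $h^{-6d}$ into $n^3$ and $n^6$ respectively --- precisely the slack between $n^5$ (resp.\ $n^2$) and the normalizing $n^8$. There is no delicate probabilistic step beyond invoking Lemma~\ref{lemma: Rosenthal}: the cross terms $\mathbb{E}[\tau_{j_1}\tau_{j_2}\tau_{j_3}\tau_{j_4}]$ in the expansion of $|k^{(1)}(X)|^4$ and the mixed $K_h^{a}\lambda_n^{4-a}$ terms in $\mathbb{E}[k^4]$ are all dominated by the leading ones via $\lambda_n\le h^d\le 1$, so no new estimates are required.
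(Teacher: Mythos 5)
Your proposal is correct and follows essentially the same route as the paper: bound $|f(X,X')-f(x,x')|$ by the H\"older envelope, apply the decoupled Rosenthal inequality of Lemma~\ref{lemma: Rosenthal} to get the four terms of orders $n^8h^{4\beta}$, $n^5h^{-3d+4\beta}$ (twice), and $n^2h^{-6d+4\beta}$, control each via Lemma~\ref{lemma: common bound} and $\lambda_n\le h^d$, and absorb the negative powers of $h$ using $nh^d\ge 1$ after normalizing by $|I_n|^4$. The only cosmetic difference is that the paper applies Rosenthal to the nonnegative envelope $h_{i_1,i_2}$ rather than to $k$ itself, which changes nothing in the bookkeeping.
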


\begin{proof}[Proof of Lemma \ref{lemma:var_snhenh}]

To evaluate $b_n:=\mathbb{E}(S_{n,h}^4)$, we prepare the multi-index notation: let
\[
    \mathcal{J}_k(m)
    :=
    \{(j_1,j_2,\ldots,j_k) \in \{0,1,2,\ldots,m\} \mid j_1+j_2+\cdots+j_k=m\}, 
\]
and let
\[
\binom{m}{j_1,j_2,\ldots,j_k}=\frac{m!}{j_1!j_2!\cdots j_k!}.
\]

Let
\[
h_{i_{1},i_{2}}(X,X')
:=
\{\norm{x-X}^{\beta}+\norm{x'-X'}^{\beta}\}\{K_{h}(x-X)K_{h}(x'-X') + \lambda_n\}
\]
and 
let
$X^{(1)}$ and $X^{(2)}$ be independent copies of $X$.
By the inequality $|f(X,X')-f(x,x')| \le L\norm{x-X}^{\beta}+L\norm{x'-X'}^{\beta}$
and 
by Lemma \ref{lemma: Rosenthal},
we get
\begin{align*}
    b_{n}
    &=
    \frac{1}{|I_n|^{4}}\mathbb{E}\Bigg[ \Bigg(
    \sum_{(i_{1},i_{2})\in I_{n}}
    |f(X_{i_{1}},X_{i_{2}})-f(x,x')|\{K_{h}(x-X_{i_{1}})K_{h}(x'-X_{i_{2}}) + \lambda_n \} \Bigg)^4 \Bigg] \\
    & \le \frac{L^{4}}{|I_{n}|^{4}} \mathbb{E} \Bigg[ \Bigg( \sum_{(i_{1},i_{2})\in I_{n}} 
    h_{i_{1},i_{2}}(X_{i_{1}},X_{i_{2}}) \Bigg)^{4} \Bigg]
    \\
    & \le \frac{C_{R}L^{4}}{|I_{n}|^{4}}
    \Bigg\{
    \underbrace{
    \Bigg(
        \sum_{(i_1,i_2)\in I_{n}}\mathbb{E}\left[
            h_{i_1,i_2}(X^{(1)}_{i_{1}},X^{(2)}_{i_{2}})
        \right] 
    \Bigg)^4}_{=:E_{1}}
    \\
    &\hspace{4em}
    +
    \underbrace{\sum_{1\le i_1\le n}\mathbb{E}\Bigg[ \Bigg( \sum_{1\le i_2\le n}\mathbb{E}[ h_{i_1,i_2}(X^{(1)}_{i_{1}},X^{(2)}_{i_{2}}) \mid X^{(1)}_{i_1}] \Bigg)^4 \Bigg]}_{=:E_{2}}
    \\
    & \hspace{4em}
    +
    \underbrace{\sum_{1\le i_2 \le n}\mathbb{E}\Bigg[ \Bigg( \sum_{1\le i_1\le n}\mathbb{E}[ h_{i_1,i_2}(X^{(1)}_{i_{1}},X^{(2)}_{i_{2}}) \mid X^{(2)}_{i_2}] \Bigg)^4 \Bigg]}_{=:E_{3}}
    +
    \underbrace{\sum_{(i_1,i_2)\in I_{n}}\mathbb{E}\Bigg[
        h_{i_1,i_2}^4 (X^{(1)}_{i_{1}},X^{(2)}_{i_{2}}) \Bigg]}_{=:E_{4}}
        \Bigg\}.
\end{align*}
We bound $E_{1}$, by using the identity
\begin{align*}
\mathbb{E}\big[h_{i_{1},i_{2}}(X^{(1)}_{i_{1}},X^{(2)}_{i_{2}})\big]
&=
\mathbb{E}\big[ K_h(x-X_{i_1}^{(1)})\norm{x-X_{i_1}^{(1)}}^{\beta} \big]\,
\mathbb{E}\big[ K_h(x'-X_{i_2}^{(2)})\norm{x'-X_{i_2}^{(2)}}^{0}\big]\\
&\quad +
        \mathbb{E}\big[ K_h(x-X_{i_1}^{(1)}) \norm{x-X_{i_1}^{(1)}}^{0} \big]\,
        \mathbb{E}\big[ K_h(x'-X_{i_2}^{(2)})\norm{x'-X_{i_2}^{(2)}}^{\beta} \big] \\
&\quad +\lambda_n \mathbb{E}\big[K_{h}(x-X_{i_{1}}^{(1)})^{0}\norm{x-X_{i_1}^{(1)}}^{\beta}\big]
        +\lambda_n \mathbb{E}\big[K_{h}(x'-X_{i_{2}}^{(2)})^{0}\norm{x'-X_{i_2}^{(2)}}^{\beta}\big]
\end{align*}
and using Lemma \ref{lemma: common bound},
as
\begin{align*}
    E_{1}^{1/4}
%%======================================
    &\le 
    \sum_{(i_1,i_2)\in I_{n}}\bigg(
        \kappa_{1,\beta}
        \kappa_{1,0} \,\, h^{\beta}
        +
        \kappa_{1,0}
        \kappa_{1,\beta} \,\, h^{\beta}
        +
        \kappa_{0,\beta} \,\, \lambda_n h^{d+\beta}
        +
        \kappa_{0,\beta}\,\, \lambda_n h^{d+\beta}
    \bigg) \nonumber\\
%%=====================================
    &=
    \sum_{(i_1,i_2)\in I_{n}}\bigg(
        2\kappa_{1,\beta}\kappa_{1,0} \,\, h^{\beta}
        +
        2 \kappa_{0,\beta} \,\, \lambda_n h^{d+\beta}
    \bigg) \nonumber\\
%%=====================================
    &= 
    n(n-1)
    \bigg(
        2\kappa_{1,\beta}\kappa_{1,0}h^{\beta}
        +
        2\lambda_n \kappa_{0,\beta}h^{d+\beta}
    \bigg) \nonumber\\
%%====================================
    &= F_{1} \,\, n^{2} h^{\beta}
    \text{ with $F_{1}:=2(\kappa_{1,\beta}\kappa_{1,0}+\kappa_{0,\beta})$},
    % n(n-1) (2\kappa_{1,\beta}\kappa_{1,0})^4 h^{4\beta}.
\end{align*}
which implies 
\begin{align}
    E_{1} \le F_{1}^{4} \,\, n^{8}h^{4\beta}.
    \label{eq: bound on E1}
\end{align}
We bound $E_{2}$, by using the inequality
\begin{align*}
    \mathbb{E}&[ h_{i_1,i_2}(X^{(1)}_{i_{1}},X^{(2)}_{i_{2}}) \mid X^{(1)}_{i_1}]
    \nonumber\\
    &=
    \norm{x-X_{i_1}^{(1)}}^{\beta}K_h(x-X_{i_1}^{(1)})
        \mathbb{E}\big[K_h(x'-X_{i_2}^{(2)})\big]
    &+&
    K_h(x-X_{i_1}^{(1)})
        \mathbb{E}\big[K_h(x'-X_{i_2}^{(2)}) \norm{x'-X_{i_2}^{(2)}}^{\beta}\big]
    \nonumber\\
    &\hspace{1em}+
    \lambda_n \norm{x-X_{i_1}^{(1)}}^{\beta}
    &+&
    \lambda_n \mathbb{E}\big[K_h(x'-X_{i_2}^{(2)})^{0}\norm{x'-X_{i_2}^{(2)}}^{\beta}\big]
    \nonumber\\
    &\le 
    \kappa_{1,0} \,\, \norm{x-X_{i_1}^{(1)}}^{\beta}K_h(x-X_{i_1}^{(1)})
    &+&
    \kappa_{1,\beta}\,\, h^{\beta}\,\, K_h(x-X_{i_1}^{(1)})
    \nonumber\\
    &\hspace{1em}+
    \lambda_n \norm{x-X_{i_1}^{(1)}}^{\beta}
    &+&
    \lambda_n \,\, \kappa_{0,\beta} \,\, h^{d+\beta},
\end{align*}
as
\begin{align*}
    E_{2}
    &\le 
    n^{4}
    \sum_{1\le i_1 \le n}\mathbb{E}
    \Bigg[
    \Bigg(
    \kappa_{1,0} \,\, \norm{x-X_{i_1}^{(1)}}^{\beta}K_h(x-X_{i_1}^{(1)})
    +\kappa_{1,\beta}\,\, h^{\beta}\,\, K_h(x-X_{i_1}^{(1)})
    \nonumber\\
    &\hspace{16em}
    +\lambda_n \norm{x-X_{i_1}^{(1)}}^{\beta}
    +\lambda_n \,\, \kappa_{0,\beta} \,\, h^{d+\beta}
    \big]
    \Bigg)^{4}
    \Bigg]
    \nonumber\\
%%==================================================
    &=
    %4^4 (n-1)^4 
    n^{4} \sum_{1 \le i_1 \le n}
    \sum_{(j_1,j_2,j_3,j_4) \in \mathcal{J}_4(4)}
    \binom{4}{j_1,j_2,j_3,j_4} \mathcal{K}_{j_{1},j_{2},j_{3},j_{4}},
\end{align*}
where
\begin{align*}
    \mathcal{K}_{j_{1},j_{2},j_{3},j_{4}}
    &:=
    \mathbb{E}
    \bigg[
        \bigg\{\norm{x-X_{i_1}^{(1)}}^{\beta}K_h(x-X_{i_1}^{(1)})
        \kappa_{1,0}\bigg\}^{j_1}
        \bigg\{K_h(x-X_{i_1}^{(1)}) \kappa_{1,\beta}h^{\beta}\bigg\}^{j_2}
    \\
    &\hspace{12em}
        \cdot \bigg\{\lambda_n \norm{x-X_{i_1}^{(1)}}^{\beta}\bigg\}^{j_3}
        \bigg\{\lambda_n \kappa_{0,\beta}h^{d+\beta}\bigg\}^{j_4}
    \bigg].
\end{align*}
Since we have
\begin{align*}
\mathcal{K}_{j_{1},j_{2},j_{3},j_{4}}
&=
\kappa_{1,0}^{j_{1}}
\kappa_{1,\beta}^{j_{2}}
\kappa_{0,\beta}^{j_{4}}\,\,
\mathbb{E}\bigg[
K_{h}(x-X_{i_{1}}^{(1)})^{j_{1} + j_{2} }
|x-X_{i_{1}}^{(1)}|^{ (j_{1}+j_{3}) \beta }
\bigg]
\,\,
\lambda_{n}^{j_{3}+j_{4}}
h^{ j_{2} \beta + j_{4} (d+\beta) }
\\
&\le
\kappa_{1,0}^{j_{1}}
\kappa_{1,\beta}^{j_{2}}
\kappa_{0,\beta}^{j_{4}} \kappa_{ j_{1}+j_{2}, (j_{1}+ j_{3})\beta }
\,\,
\lambda_{n}^{j_{3}+j_{4}}
h^{(1-j_{1}-j_{2})d + (j_{1}+j_{3})\beta + j_{2}\beta + j_{4}(d+\beta)}
\\
&=
\kappa_{1,0}^{j_{1}}
\kappa_{1,\beta}^{j_{2}}
\kappa_{0,\beta}^{j_{4}} \kappa_{ j_{1}+j_{2}, (j_{1}+ j_{3})\beta }
\,\,
\lambda_{n}^{j_{3}+j_{4}}
h^{(1-j_{1}-j_{2}+j_{4})d + (j_{1}+j_{2}+j_{3}+j_{4} )\beta }
,
\end{align*}
we further bound $E_{2}$ as
\begin{align}
    E_{2}
%%================================================
    &\le
    n^{5}
    \sum_{(j_1,j_2,j_3,j_4) \in \mathcal{J}_4(4)}
    \binom{4}{j_1,j_2,j_3,j_4}
    \kappa_{1,0}^{j_{1}}
    \kappa_{1,\beta}^{j_{2}}
    \kappa_{0,\beta}^{j_{4}} \kappa_{ j_{1}+j_{2}, (j_{1}+ j_{3})\beta }
    \,\,
    \lambda_{n}^{j_{3}+j_{4}}
    h^{(1-j_{1}-j_{2}+j_{4})d + (j_{1}+j_{2}+j_{3}+j_{4} )\beta }
    \nonumber\\
%%================================================
    &=
    n^{5}
    \sum_{(j_1,j_2,j_3,j_4) \in \mathcal{J}_4(4)}
    \omega_{j_{1},j_{2},j_{3},j_{4}}
    \lambda_{n}^{j_{3}+j_{4}}
    h^{(j_{3}+2j_{4}-3)d + 4\beta }
    \nonumber\\
    &\le
    F_{2} \,\,n^5 h^{-3d+4\beta},
    \label{eq: bound on E2}
\end{align}
where we let
\begin{align*}
    \omega_{j_{1},j_{2},j_{3},j_{4}}
    &:=
    \binom{4}{j_1,j_2,j_3,j_4}
    \kappa_{1,0}^{j_{1}}
    \kappa_{1,\beta}^{j_{2}}
    \kappa_{0,\beta}^{j_{4}} \kappa_{ j_{1}+j_{2}, (j_{1}+ j_{3})\beta },
    \\
    F_{2}
    &:= 4^{4}\max_{(j_{1},j_{2},j_{3},j_{4})\in \mathcal{J}(4)}
    \kappa_{1,0}^{j_{1}}
    \kappa_{1,\beta}^{j_{2}}
    \kappa_{0,\beta}^{j_{4}} \kappa_{ j_{1}+j_{2}, (j_{1}+ j_{3})\beta }.
\end{align*}
Similarly, we get
\begin{align}
    E_{3} \le F_{2}\,\, n^{5} h^{-3d+4\beta}
    \label{eq: bound on E3}
\end{align}
as well as
\begin{align}
    E_{4}
    &=
     \sum_{(i_1,i_2)\in I_{n}}
        \sum_{(j_1,j_2) \in \mathcal{J}_2(4)}
        \sum_{(j_1',j_2') \in \mathcal{J}_2(4)}
        \binom{4}{j_1,j_2}
        \binom{4}{j_1',j_2'} \nonumber\\
    &\hspace{3em}
    \mathbb{E}\bigg[
        \norm{x-X_{i_1}}^{j_1\beta}
        K_h(x-X_{i_1})^{j_1'}
    \bigg]
    \mathbb{E}\bigg[
        \norm{x'-X_{i_2}}^{j_2\beta}
        K_h(x'-X_{i_2})^{j_1'}
    \bigg]
    \lambda_n^{j_2'} \nonumber\\
%%==========================================
    &=
     \sum_{(i_1,i_2)\in I_{n}}
        \sum_{(j_1,j_2) \in \mathcal{J}_2(4)}
        \sum_{(j_1',j_2') \in \mathcal{J}_2(4)}
        \binom{4}{j_1,j_2}
        \binom{4}{j_1',j_2'}
        \kappa_{j_1',j_1\beta}h^{(1-j_1')d+j_1\beta}
        \kappa_{j_1'j_2\beta}h^{(1-j_1')d+j_2\beta}
        \lambda_n^{j_2'} \nonumber\\
    &\le
    F_{3}\,\,
        n^2 
        \sup_{j_2' \in \{0,1,2,3,4\}}
        \lambda_n^{j_2'}
        h^{2(-3+j_2')d+4\beta}
     \quad (\text{ for some $F_{3}>0$ not depending on $n$ and $h$}) \nonumber\\
%%===========================================
    &\le
    F_{3} \,\, n^{2} h^{-6d+4\beta}.
    \label{eq: bound on E4}
\end{align}

Combining (\ref{eq: bound on E1})--(\ref{eq: bound on E4}), we obtain
\begin{align}
    b_n
    &\le \frac{2^{4}}{n^{8}} 
    (F_{1}^{4} n^{8}h^{4\beta} + 2F_{2} n^{5}h^{-3d+4\beta} + F_{3} n^{2}h^{-6d+4\beta} )
    \nonumber\\
    &\le G_{1} \,\, h^{4\beta} \left(1 + \frac{1}{(nh^{d})^{3}} + \frac{1}{(nh^{d})^{6}} \right) 
    \text{ with } G_{1}:= 2^{4}F_{1}^{4}+2^{5}F_{2} + F_{3},
    \label{eq: bound on b^1}
\end{align}
which concludes the proof.

\end{proof}

\subsection{Proof of Theorem~\ref{proposition: upper bounds of terms}}
\label{app:proof of proposition: upper bounds of terms}

Using the supporting lemmas in the previous subsection, we shall give upper estimates of  $\rho_{1}$, $\rho_{2}$, and $\rho_{3}$.

\subsubsection*{Step 1: Bounding $\rho_1(n,h)$}
We start with employing the Jensen inequality to get
\[ 
| f(x,x')-\mathbb{E}[\hat{f}_{n,h}(x,x')] |^{2}
\le \mathbb{E}\left[ \left| f(x,x')-\mathbb{E}\left[\hat{f}_{n,h}(x,x')\mid X\right] \right|^{2} \right]
.
\]

%% fixed
Under the condition~\ref{condition: design} (a): with the diameter $\text{diam}(\mathcal{X}):=\sup\{\|x-x'\| \mid x,x' \in \mathcal{X}\}<\infty$ of the set $\mathcal{X}$, expression~(\ref{eq: sum expression of f hat}) gives
\begin{align*}
    \Big|f(x,x')-&\mathbb{E}\Big[\hat{f}_{n,h}(x,x') \mid X\Big]\Big| \\
    &\le 
    \Big| \sum_{(i_1,i_2) \in I_n} W_{i_1,i_2}(x,x';X)\{f(x,x')-f(X_{i_1},X_{i_2})\}\Big| \\
%%============================================
    &\le 
    \sum_{(i_1,i_2) \in I_n} W_{i_1,i_2}(x,x';X)|f(x,x')-f(X_{i_1},X_{i_2})\}| \\
%%============================================
    &\le 
    \sum_{(i_1,i_2) \in I_n} W_{i_1,i_2}(x,x';X)|L\|x-X_{i_1}\|^{\beta} + L\|x'-X_{i_2}\|^{\beta}\}| \\
%%============================================
    &\le 
    \sum_{(i_1,i_2) \in I_n} \frac{K_h(x-X_{i_1})K_h(x'-X_{i_2})}{\sum_{(i_1,i_2) \in I_n}\{K_h(x-X_{i_1})K_h(x'-X_{i_2}) + \lambda_n\} }|L\|x-X_{i_1}\|^{\beta} + L\|x'-X_{i_2}\|^{\beta}| \\
    &\hspace{2em}+
     \sum_{(i_1,i_2) \in I_n} \frac{\lambda_n|L\|x-X_{i_1}\|^{\beta} + L\|x'-X_{i_2}\|^{\beta}|}{\sum_{(i_1,i_2) \in I_n}\{K_h(x-X_{i_1})K_h(x'-X_{i_2}) + \lambda_n\}} \\
%%============================================
    &\le 
    2Lh^{\beta}
    \frac{\sum_{(i_1,i_2) \in I_n} K_h(x-X_{i_1})K_h(x'-X_{i_2})}{\sum_{(i_1,i_2) \in I_n}\{K_h(x-X_{i_1})K_h(x'-X_{i_2}) + \lambda_n\}}\\
    &\hspace{2em}+
    2L\text{diam}(\mathcal{X})^{\beta} 
    \frac{\lambda_n}{|I_n^{-1}|\sum_{(i_1,i_2) \in I_n}K_h(x-X_{i_1})K_h(x'-X_{i_2}) + \lambda_n} \\
%%=============================================
    &\le 
    2Lh^{\beta} + 2L\text{diam}(\mathcal{X})^{\beta} \frac{\lambda_n}{c+\lambda_n} 
    \qquad (\because \text{Lemma~\ref{lemma: number of positive points}})  \\
    &\le 
    C_{1} h^{\beta}
\end{align*}
for some $C_{1}>0$, where the last inequality follows from the assumptions $h \le 1$ and $\lambda_n \le h^d$, indicating $\lambda_n \le h^{d} \le h \le h^{\beta}$ (for $0 < \beta \le 1 \le d$).  

%% random
Under the condition~\ref{condition: design} (b): 
expression~(\ref{eq: frac expression}) gives
\begin{align*}
    \Big|f(x,x')-&\mathbb{E}\Big[\hat{f}_{n,h}(x,x') \mid X\Big]\Big| 
%=============================
=
    \Big|
    \frac{S_{n,h}}{T_{n,h}}
    \Big| 
=
    \Big|
    \frac{S_{n,h}}{\overline{T}_{n,h}}
    \Big|
    +
    \Big|
    S_{n,h} \varepsilon_{n,h}
    \Big| 
\le 
    \frac{|S_{n,h}|}{l^2}
    +
    \Big| S_{n,h} \varepsilon_{n,h} \Big|. 
\end{align*}
This, together with Lemma~\ref{lemma: moment of epsilon}, \ref{lemma:var_snh}, \ref{lemma:var_snhenh}, indicates that
\begin{align*}
    \mathbb{E}\left[ \left| f(x,x')-\mathbb{E}\left[\hat{f}_{n,h}(x,x')\mid X\right] \right|^{2} \right]
&\le 
    \mathbb{E}\left(
        \left\{
        \frac{|S_{n,h}|}{l^2}
        +
        \Big| S_{n,h} \varepsilon_{n,h} \Big|
        \right\}^2
    \right) \\
&\le 
    2  
    \left\{
        \frac{\mathbb{E}(S_{n,h}^2)}{l^4}
        +
        \mathbb{E}(S_{n,h}^2 \varepsilon_{n,h}^2)
    \right\} \\
&\le 
    2 \left\{
        \frac{\Var[S_{n,h}] + \mathbb{E}[|S_{n,h}|]^2}{l^4}
        +
        \mathbb{E}[S_{n,h}^4]^{1/2}\mathbb{E}[\varepsilon_{n,h}^4]^{1/2}
    \right\} \\
&\overset{(\star)}{\le} 
    2\left\{
        \frac{C_2 h^{4\beta} + C_3 h^{2\beta} (1+\lambda_n h^{-d})^2}{l^2}
        +
        C_4 (h^{4\beta})^{1/2}(1/(nh^d)^3)^{1/2}
    \right\} \\
&\le 
    C_{5} h^{2\beta}
\end{align*}
for some $C_{2},C_3,C_4,C_5>0$, where in the inequality $(\star)$ we further utilize the evaluation
\begin{align*}
    \mathbb{E}(|S_{n,h}|)
&=
    \mathbb{E}\left(
    \bigg|
        |I_n|^{-1}\sum_{(i_1,i_2) \in I_n} 
        \{f(x,x')-f(X_{i_1},X_{i_2})\}
        \{K_h(x-X_{i_1})K_h(x'-X_{i_2})+\lambda_n\}
    \bigg|
    \right) \\
&\le 
    \mathbb{E}\left(
        |I_n|^{-1}\sum_{(i_1,i_2) \in I_n} 
        \big| f(x,x')-f(X_{i_1},X_{i_2})\big|
        \{K_h(x-X_{i_1})K_h(x'-X_{i_2})+\lambda_n\}
    \right) \\
&\le 
    |I_n|^{-1}\sum_{(i_1,i_2) \in I_n} 
    \mathbb{E}\left(
        \{ L \|x-X_{i_1}\|^{\beta} + L \|x'-X_{i_2}\|^{\beta} \}
        \{K_h(x-X_{i_1})K_h(x'-X_{i_2})+\lambda_n\}
    \right) \\
&=
    \mathbb{E}\left(
        \{ L \|x-X\|^{\beta} + L \|x'-X'\|^{\beta} \}
        \{K_h(x-X)K_h(x'-X')+\lambda_n\}
    \right) \\
&\le 
    2 L \kappa_{1,0}\kappa_{1,\beta} h^{\beta} 
    +
    2 \lambda_n L \kappa_{0,\beta} h^{-d+\beta} \quad (\because \text{Lemma \ref{lemma: common bound}}).
\end{align*}

Therefore, we obtain
\begin{align*}
    \rho_1(n,h)
    =
    \sup_{f \in \mathcal{F}(\beta,L)}
    \left|f(x,x')-\mathbb{E}\left[\hat{f}_{n,h}(x,x')\right]\right|^2
    \le 
    C_{5} h^{2\beta}.
    % C_{1} h^{2\beta} \text{ with $C_{1}:=\revise{(2L)^2}$}.
\end{align*}

\bigskip 

\subsubsection*{Step 2: Bounding $\rho_2(n,h)$
under Condition \ref{condition: design} (a).}
Observe that, for any $x,x',i_1,i_2$,
\begin{align}
W_{i_1,i_2}(x,x';X) 
&\le \frac{K_{h}(x-X_{i_{1}})K_{h}(x'-X_{i_{2}}) + \lambda_n}{ 
\sum_{ j_{1},j_{2} : \|x-X_{j_{1}}\| \le rh ,
\|x'-X_{j_{2}}\| \le rh
}
\{K_{h}(x-X_{j_{1}})K_{h}(x'-X_{j_{2}})+\lambda_n\} }
\nonumber\\
&\le 
\frac{  h^{-2d}K_{\mathrm{max}}^{2} +\lambda_{n} }
{ \underk^{2} h^{-2d}K_{\mathrm{max}}^{2} +\lambda_{n} }
\frac{1}{|\{j_{1} :  \|x-X_{j_{1}}\| \le rh \}| \cdot 
|\{j_{2} :  \|x'-X_{j_{2}}\| \le rh \}|
}
\nonumber\\
&\le 
\max\left\{\frac{1}{\underk^{2}}, 1 \right\}
\frac{1}{|\{j_{1} :  \|x-X_{j_{1}}\| \le rh \}| \cdot 
|\{j_{2} :  \|x'-X_{j_{2}}\| \le rh \}|
}
\label{eq: bound W}
\end{align}
where the second inequality follows from Condition \ref{condition: kernel}
and the last inequality follows from the inequality
$(a+b)/(c+d)\le \max\{a/c,b/d\}$ for $a,b,c,d>0$.
Combined with Condition \ref{condition: design} (a),
this gives
\begin{align*}
\max_{i_{1},i_{2}}
W_{i_1,i_2}(x,x';X) 
&\le 
\max\left\{\frac{1}{\underk^{2}}, 1 \right\}
\frac{1}{r^{2d} n^{2}h^{2d}}.
\end{align*}
Combined with Condition \ref{condition: link regression function} (b) and (\ref{eq: sum of W}), this yields
\begin{align*}
    \Var[\hat{f}_{n,h}(x,x') \mid X]
    &=
    \Var\left[
    \sum_{(i_1,i_2) \in I}Y_{i_1i_2}W_{i_1,i_2}(x,x';X)
    \big| X \right] \\
    &=
    \sum_{(i_1,i_2) \in I}W_{i_1,i_2}^2(x,x';X)\sigma^2(X_{i_1},X_{i_2}) \\
    &\le 
    \tau \left\{\sum_{(i_1,i_2) \in I}W_{i_1,i_2}(x,x';X)\right\}
    \max_{i_1,i_2}W_{i_1,i_2}(x,x';X) \\
    &\le 
    C_6 \frac{1}{n^2h^{2d}} \text{ with } C_6:=
    \max\left\{\frac{1}{\underk^{2}}, 1 \right\}
    \frac{\tau}{r^{2d}},
\end{align*}
which gives the desired bound under Condition \ref{condition: design} (a):
\begin{align*}
    \rho_2(n,h)
    &=
    \sup_{f \in \mathcal{F}(\beta,L)}
    \mathbb{E}[\Var[\hat{f}_{n,h}(x,x') \mid X]] 
    \le 
    C_6 (nh^{d})^{-2}. 
\end{align*}

\subsubsection*{Step 2': Bounding $\rho_2(n,h)$
under Condition \ref{condition: design} (b).}
We start with the inequality
\begin{align*}
    \mathbb{E}[\Var[\hat{f}_{n,h}(x,x') \mid X]]
    &\le 
    \tau \mathbb{E}\left[\left\{\sum_{(i_1,i_2) \in I}W_{i_1,i_2}(x,x';X)\right\}
    \max_{i_1,i_2}W_{i_1,i_2}(x,x';X)\right]\\
    &=\tau\mathbb{E}\left[
    \max_{i_1,i_2}W_{i_1,i_2}(x,x';X)\right].
\end{align*}
From (\ref{eq: bound W}),
this is further bounded as
\begin{align*}
    \mathbb{E}[\Var[\hat{f}_{n,h}(x,x') \mid X]]
    &\le 
    \tau \max\left\{\frac{1}{\underk^{2}}, 1 \right\} 
    \mathbb{E}\left[\min\left\{1, \frac{1}{B(n,v(x))B(n,v(x'))}\right\}\right],
\end{align*}
where
let $B(n,v(x)):=
|\{j  : 1\le j \le n,  \|x-X_{j}\| \le rh \}|$
and $B(n,v(x')):=
|\{j : 1\le j \le n, \|x'-X_{j}\| \le rh \}|$.
Note that 
$B(n,v(x))$ and $B(n,v(x'))$ are dependent and binomially distributed with parameters $n$ and $v(x):=\int_{X:\|x-X\|\le rh} m(X)\mathrm{d}X$, 
with parameters $n$ and $v(x'):=\int_{X:\|x'-X\|\le rh} m(X)\mathrm{d}X$, respectively.
Since $B(n,v(x))B(n,v(x'))>1$ only if either $B(n,v(x))$ or $B(n,v(x'))$ is below $1$, we have
\begin{align*}
    \mathbb{E}&\left[\min\left\{1, \frac{1}{B(n,v(x))B(n,v(x'))}\right\}\right]\nonumber\\
    &\le 
    \left\{
    \mathbb{E}\left[
    \mathbbm{1}_{B(n,v(x))\le 1}+ \mathbbm{1}_{B(n,v(x')) \le 1}\right]
    +
    \mathbb{E}\left[
    \frac{\mathbbm{1}_{B(n,v(x))>1}\mathbbm{1}_{B(n,v(x'))>1}}{B(n,v(x))B(n,v(x'))}
    \right]
    \right\}.
\end{align*}
Taking an absolute constant such that $\max\{v(x),v(x')\}<C_{v}$ with $0<C_{v}<1$ (, which is possible since $h\le 1$),
we get
\begin{align}
\mathbb{E}\left[
    \mathbbm{1}_{B(n,v(x))\le 1} \right]
    &=
    (1-v(x))^{n}+n\frac{v(x)}{1-v(x)}(1-v(x))^{n}
    \nonumber\\
    &\le \left\{\frac{1}{1-C_{v}}nv(x)+1\right\}(1-v(x))^{n}
    \nonumber\\
    &\le \left\{\frac{1}{1-C_{v}}nv(x)+1\right\}\exp\{-nv(x)\}
    .
    \label{eq: bounding B1}
\end{align}
The Cauchy-Schwarz inequality gives
\begin{align}
    \mathbb{E}&\left[
    \frac{\mathbbm{1}_{B(n,v(x))>1}\mathbbm{1}_{B(n,v(x')>1}}{B(n,v(x))B(n,v(x'))}
    \right]\nonumber\\
    &\le 
    \sqrt{\mathbb{E}\left[
    \frac{\mathbbm{1}_{B(n,v(x))>1}}{B^{2}(n,v(x))}
    \right]}
    \sqrt{\mathbb{E}\left[
    \frac{\mathbbm{1}_{B(n,v(x'))>1}}{B^{2}(n,v(x'))}
    \right]}\nonumber\\
    &\le 
    \sqrt{\mathbb{E}\left[
    \frac{6 \cdot \mathbbm{1}_{B(n,v(x))>1}}{(1+B(n,v(x)))(2+B(n,v(x)))}
    \right]
    \mathbb{E}\left[
    \frac{6 \cdot \mathbbm{1}_{B(n,v(x'))>1}}{(1+B(n,v(x')))(2+B(n,v(x')))}
    \right]}\nonumber\\
    &=\frac{6}{(n+2)(n+1)v(x)v(x')},
    \label{eq: bounding 1/B}
\end{align}
where the second inequality follows since $(1+b)(2+b)\le 6b^{2}$ for $b\ge1$
and the last identity follows from a binomial calculus:
\begin{align*}
\mathbb{E}\left[
    \frac{\mathbbm{1}_{B(n,v(x))>1}}{(1+B(n,v(x)))(2+B(n,v(x)))}
    \right]
    &=\sum_{k=2}^{n}\{v(x)\}^{k}(1-v(x))^{n-k}\frac{1}{(1+k)(2+k)}\frac{n!}{k!(n-k)!}\\
    &=\frac{1}{v^{2}(x)}\frac{1}{(n+1)(n+2)}.
\end{align*}
Inequalities (\ref{eq: bounding B1}) and (\ref{eq: bounding 1/B}), together with Lemma \ref{lemma: small ball}, yield
\begin{align*}
    \mathbb{E}\left[\min\left\{1, \frac{1}{B(n,v(x))B(n,v(x'))}\right\}\right]
    \le 
    \left\{2\left(1+\frac{C'_{B}}{1-C_{v}} nh^{d}\right)
    \exp\{-C_B nh^{d}\}
    +
    \frac{6}{C_B^2}\frac{1}{n^{2}h^{2d}}
    \right\}.
\end{align*}
Taking an absolute constant $C_{7}$ such that 
\[
2\left(1+\frac{C'_{B}}{1-C_{v}} nh^{d}\right)
    \exp\{-C_B nh^{d}\}
    \le 
    C_{7}
    \frac{6}{C_B^2}\frac{1}{n^{2}h^{2d}},
\]
we obtain the desired bound under Condition \ref{condition: design} (b):
\begin{align*}
    \rho_2(n,h)
    &=
    \sup_{f \in \mathcal{F}(\beta,L)}
    \mathbb{E}(\Var(\hat{f}_{n,h}(x,x') \mid X)) 
    \le 
    C_8 n^{-2}h^{-2d} \text{ with }C_{8}:=\tau (1+C_{7}) 
    \max\left\{\frac{1}{\underk^{2}}, 1 \right\}
    \frac{6}{C^2_B}.
\end{align*}

\vspace{5mm}

\subsubsection*{Step 3: Bounding $\rho_3(n,h)$} 
We make Condition \ref{condition: design} (b)
since $\rho_{3}(n,h)=0$ under Condition \ref{condition: design} (a).

With the variance inequality $\Var[A+B]\le 2 (\Var[A]+\Var[B])$, 
we get
\begin{align*}
    \Var[\mathbb{E}[\hat{f}_{n,h}(x,x') \mid X]]
    =
    \Var\left[
    \frac{S_{n,h}}{T_{n,h}}
    \right]
    &=
    \Var\left[
        S_{n,h}\left\{ \frac{1}{\overline{T}_{n,h}} + \varepsilon_n\right\}
    \right]
    \\
    &\le 
    2\left\{
    \frac{\Var[S_{n,h}]}{\overline{T}_{n,h}^2}
    +
    \Var[S_{n,h} \varepsilon_{n,h}]
    \right\}
    \\
    &\le (2/l^2)\Var[S_{n,h}] + 2\Var[S_{n,h}\varepsilon_{n,h}],
\end{align*}
where the last inequality follows since $\overline{T}_{n,h} \ge l^2$ (by the assumption on $m$).
Admitting the evaluations
$\Var[S_{n,h}] = O(h^{2\beta}(nh^{d})^{-1} )$ and $\Var[S_{n,h} \varepsilon_n] \le \mathbb{E}(S_{n,h}^2 \varepsilon_n^2) \le \mathbb{E}(S_{n,h}^4)^{1/2}\mathbb{E}(\varepsilon_n^4)^{1/2}=O(h^{2\beta}(nh^{d})^{-1} )$ proved by Lemmas \ref{lemma: moment of epsilon}, \ref{lemma:var_snh}, and \ref{lemma:var_snhenh}, 
we obtain
\[
    \Var[\mathbb{E}[\hat{f}_{n,h}(x,x') \mid X]]
    \le
    C_9 h^{2\beta}(nh^{d})^{-1}
    \text{ for some $C_9>0$ not depending on $n$ and $h$},
\]
which completes the proof. 
\qed

\vspace{3mm}

\subsection{Proof of Theorem \ref{theorem: lower bound}}
\label{app:proof of lower}

Without loss of generality, we can assume $x=x'=0$.

Let us fix $\phi\in \mathcal{F}(\beta,1)$ satisfying the following condition:
\begin{itemize}
    \item $\phi(0,0)=1$;
    \item $\phi(x,x')>0$ if and only if $\|x\| \le 1$ and $\|x'\|\le 1$;
    \item $\sup_{x,x'}\phi(x,x) \le 1$.
\end{itemize}
Take a one-parameter subset $\tilde{\mathcal{F}}$ of $\mathcal{F}(\beta,L)$ in such a way that
\begin{align*}
    \tilde{\mathcal{F}} := \left\{ f(\cdot,\cdot)=\eta \phi(\cdot/h,\cdot/h)\,:\, |\eta|\le Lh^{\beta} \right\}.
\end{align*}
%and take an arbitrary prior density $\pi$ on $\{\eta : |\eta| \le Lh^{\beta}\}$ with a finite Fisher information.
For any fixed $f(\cdot,\cdot)=\eta \phi(\cdot/h,\cdot/h) \in \tilde{\mathcal{F}}$, we define its estimator  $\hat{f}(\cdot,\cdot)=\hat{\eta}\phi(\cdot/h,\cdot/h)$, where the real value $\hat{\eta}$ is specified by the observations $X_1,\ldots,X_n$ and $Y_{12},Y_{13},\ldots,Y_{(n-1)n}$. 
As $\hat{\eta}$ is a function of the observations, a set of functions $\hat{\eta}$ is denoted by $\mathcal{R}$. 
Observe that
\begin{align}
\inf_{\hat{f}}\sup_{f\in \mathcal{F}(\beta,L)}\mathbb{E}|f(0,0)-\hat{f}(0,0)|^{2}
&\ge 
\inf_{\hat{f}}\sup_{f \in \tilde{\mathcal{F}}}
\mathbb{E}|f(0,0)-\hat{f}(0,0)|^{2}\nonumber\\
&=\inf_{\hat{\eta} \in \mathcal{R}}\sup_{\eta: |\eta|\le Lh^{\beta}}
\mathbb{E}|\eta-\hat{\eta}|^{2}\nonumber\\
&\ge \inf_{\hat{\eta} \in \mathcal{R}} \int \mathbb{E}|\eta-\hat{\eta}|^{2} \pi(\eta)d\eta,
\label{eq: Bayes risk lower bound}
\end{align}
where the last inequality follows since the average is bounded above by the maximum. Here consider bounding the right-most side in (\ref{eq: Bayes risk lower bound}).
To do so, we employ the van Tree inequality.
    \begin{lemma}[The van Tree inequality; \citealp{vanTreesbook,GillandLevit(1995)}]
    \label{lem: van tree}
    Let $\{p(z\mid \theta):\theta\in\Theta\}$ be a parametric model with $\Theta$ a closed interval on the real line. Let $\pi(\theta)$ be a probability density on $\Theta$ that converges to zero at the endpoints of the interval $\Theta$. Let $\hat{\theta}$ be any estimator of $\theta$.
    If $p(z\mid\theta)$ satisfies
    \begin{align*}
        \Ep_{\theta}(\partial/\partial\theta)\{\log p(Z\mid\theta)\}=0,
    \end{align*}
    then we have
    \begin{align*}
        \Ep_{\theta}\{\hat{\theta}(Z)-\theta\}^{2}
        \ge \frac{1}{\int[\mathcal{I}(\theta)]\pi(\theta)d\theta+\mathcal{I}(\pi)},
    \end{align*}
    where let
    \begin{align*}
        \mathcal{I}(\theta)&:=\Ep_{\theta}[\{(\partial/\partial\theta)\log p(Z\mid\theta)\}^{2}]\quad\text{and}\\
        \mathcal{I}(\pi)&:=\int \{(\partial/\partial \theta) \log \pi(\theta)\}^{2}\pi(\theta)d\theta.
    \end{align*}
    \end{lemma}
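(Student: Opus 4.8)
The plan is to derive the van Trees inequality as a Bayesian analogue of the Cram\'er--Rao bound, following \citet{GillandLevit(1995)}. Write the endpoints of the interval $\Theta$ as $a<b$, and form the joint density $p(z,\theta):=p(z\mid\theta)\pi(\theta)$ on $\mathcal{Z}\times\Theta$; all expectations $\mathbb{E}$ below are with respect to this joint law, so that $\mathbb{E}[(\hat{\theta}-\theta)^2]=\int\mathbb{E}_\theta[(\hat{\theta}(Z)-\theta)^2]\pi(\theta)\,d\theta$ is exactly the Bayes risk appearing in (\ref{eq: Bayes risk lower bound}). The single identity driving the proof is
\begin{align*}
\mathbb{E}\left[(\hat{\theta}(Z)-\theta)\,\frac{\partial}{\partial\theta}\log p(Z,\theta)\right]=1.
\end{align*}
To establish it, note that the integrand equals $(\hat{\theta}(z)-\theta)\,\partial_\theta p(z,\theta)$, so I would integrate by parts in $\theta$ for each fixed $z$. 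The boundary contribution $[(\hat{\theta}(z)-\theta)p(z,\theta)]_{\theta=a}^{\theta=b}$ vanishes because $\pi$, hence $p(z,\cdot)$, tends to zero at the endpoints; the remaining term is $\int\!\int \partial_\theta(\theta-\hat{\theta}(z))\,p(z,\theta)\,d\theta\,dz=\int\!\int p(z,\theta)\,d\theta\,dz=1$, since $\partial_\theta(\theta-\hat{\theta}(z))=1$.

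Next I would apply the Cauchy--Schwarz inequality to this identity, obtaining
\begin{align*}
1=\left(\mathbb{E}\left[(\hat{\theta}-\theta)\,\partial_\theta\log p(Z,\theta)\right]\right)^2
\le \mathbb{E}[(\hat{\theta}-\theta)^2]\cdot\mathbb{E}\left[\left(\partial_\theta\log p(Z,\theta)\right)^2\right],
\end{align*}
which rearranges to $\mathbb{E}[(\hat{\theta}-\theta)^2]\ge 1/\mathbb{E}[(\partial_\theta\log p(Z,\theta))^2]$ (the bound is vacuous if $\hat{\theta}$ has no finite second moment, so this may be assumed). It then remains to identify the denominator. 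Decomposing the joint score as $\partial_\theta\log p(Z,\theta)=\partial_\theta\log p(Z\mid\theta)+\partial_\theta\log\pi(\theta)$ and squaring produces a cross term whose expectation factorizes as $\int (\partial_\theta\log\pi(\theta))\,\pi(\theta)\,\mathbb{E}_\theta[\partial_\theta\log p(Z\mid\theta)]\,d\theta$; this vanishes by the hypothesis $\mathbb{E}_\theta[\partial_\theta\log p(Z\mid\theta)]=0$. Hence the joint Fisher information splits additively, and the two surviving terms are $\mathbb{E}[(\partial_\theta\log p(Z\mid\theta))^2]=\int\mathcal{I}(\theta)\pi(\theta)\,d\theta$ and $\mathbb{E}[(\partial_\theta\log\pi(\theta))^2]=\mathcal{I}(\pi)$, which yields the claimed bound.

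The routine parts are the Cauchy--Schwarz step and the algebraic splitting of the Fisher information; the main obstacle is the rigorous justification of the integration-by-parts identity. This requires enough regularity to differentiate under the integral sign and to interchange the $z$- and $\theta$-integrations (Fubini), together with the precise sense in which $\pi$ ``converges to zero at the endpoints'' so that the boundary term genuinely drops; it suffices, for instance, that $\theta\mapsto\int(\hat{\theta}(z)-\theta)p(z,\theta)\,dz$ is absolutely continuous on $\Theta$ with vanishing values at $a$ and $b$. Since these are precisely the regularity hypotheses built into the statement, and since the normal and Bernoulli families invoked in Theorem~\ref{theorem: lower bound} plainly satisfy them, I would cite them directly as in \citet{GillandLevit(1995)} rather than re-deriving the measure-theoretic conditions.
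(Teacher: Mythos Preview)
Your argument is correct and is precisely the Gill--Levit derivation the paper defers to: the paper does not supply its own proof of this lemma but simply writes ``The derivation is given in \citet{GillandLevit(1995)}.'' Your integration-by-parts identity $\mathbb{E}[(\hat\theta-\theta)\,\partial_\theta\log p(Z,\theta)]=1$, followed by Cauchy--Schwarz and the additive splitting of the joint Fisher information via the score condition $\mathbb{E}_\theta[\partial_\theta\log p(Z\mid\theta)]=0$, is exactly that cited argument, so there is nothing to compare.
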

    The derivation is given in \citet{GillandLevit(1995)}.
Let $q(Y\mid\mu)$ denote the conditional density of $Y$ given the conditional mean $\mu$.
By setting $z=(y_{12},\ldots,y_{(n-1)n})$,
    $\theta=\eta$,
    $\Theta=\{\eta:|\eta|\le Lh^{\beta}\}$,
    and
    $p(z\mid\theta)=\prod_{i_{i}<i_{2}}q(y_{i_{1}i_{2}}\mid \eta \phi(X_{i_{1}},X_{i_{2}}))$,
and by taking arbitrary prior density satisfying the assumption in Lemma \ref{lem: van tree} as $\pi$,
    the van Tree inequality gives
\begin{align}
\inf_{\hat{\eta} \in \mathcal{R}} \int \mathbb{E}[|\eta-\hat{\eta}|^{2} \mid X_{1},\ldots,X_{n}]\pi(\eta)d\eta
\ge \frac{1}{\int \mathcal{I}_{n}(\eta) \pi(\eta)d\eta + \mathcal{I}_{\pi}},
\label{eq: van Trees inequality}
\end{align}
where 
\begin{align*}
\mathcal{I}_{n}(\eta):= \mathbb{E}
\left\{ \sum_{i_{1}<i_{2}} \partial_{\eta} \log q(Y_{i_{1}i_{2}}\mid \eta\phi(X_{i_{1}},X_{i_{2}})) \right\}^{2}
\text{ and }
\mathcal{I}_{\pi}:=\int \{\partial_{\eta}\log \pi(\eta)\}^{2}\pi(\eta)d\eta.
\end{align*}
Combining (\ref{eq: Bayes risk lower bound}) and (\ref{eq: van Trees inequality}) with the Jensen inequality yields
\begin{align}
    \inf_{\hat{f}}\sup_{f\in \mathcal{F}(\beta,L)}\mathbb{E}|f(0,0)-\hat{f}(0,0)|^{2}
&\ge \frac{1}{\Ep_{X_{1},\ldots,X_{n}}[\int \mathcal{I}_{n}(\eta)]+\mathcal{I}_{\pi}}. 
\label{eq: last bound}
\end{align}
By a change of variables ($\eta\mapsto \eta/h^{\beta}$), 
we get
\begin{align}
    \mathcal{I}_{\pi} &= \left(\frac{L}{h^{\beta}}\right)^{2}\mathcal{I}_{\tilde{\pi}},
\label{eq: bound Ipi}
\end{align}
where $\tilde{\pi}$ is a density of $\eta/h^{\beta}$.
By a change of variables ($\eta \mapsto \mu:= \eta\phi(X_{i_{1}}/h, X_{i_{2}}/h) $), we get 
\begin{align*}
    \mathcal{I}_{n}(\eta) &= \sum_{i_{1}<i_{2}} 
    \{\phi(X_{i_{1}}/h,X_{i_{2}}/h)\}^{2}
    \mathbb{E}\{\partial_{\mu}\log q(Y\mid \mu)\}^{2} \nonumber\\
    &= \mathcal{I}(\mu) \sum_{i_{1}<i_{2}}
    \{\phi(X_{i_{1}}/h,X_{i_{2}}/h)\}^{2}
\end{align*}
where $\mathcal{I}(\mu)=\mathbb{E}\{\partial_{\mu}\log q(Y\mid \mu)\}^{2}$.
In fixed-design cases, 
from the condition that $\phi(x,x')\le 1$ and
from Condition \ref{condition: design} (a), we get
\begin{align}
\sum_{i_{1}<i_{2}}
    \{\phi(X_{i_{1}}/h,X_{i_{2}}/h)\}^{2}
    &\le \left|\left\{t\in\mathbb{Z}^{d}\mid \left\|\frac{c_{X}/n^{1/d}t}{h}\right\|\le  1\right\}\right|^{2}\nonumber\\
    &\le \frac{1}{c_{X}^{d}} n^{2}h^{2d}.
\label{eq: bound In fixed-design}
\end{align}

In random-design cases, 
from the condition that $\phi(x,x')\le 1$ and
from Lemma \ref{lemma: common bound}, we get
\begin{align}
\Ep_{X_{1},\ldots,X_{n}}\sum_{i_{1}<i_{2}}
    \{\phi(X_{i_{1}}/h,X_{i_{2}}/h)\}^{2}
    \le n^{2} \Pr(\|X_{1}/h\|\le 1)
    \Pr(\|X_{1}/h\|\le 1)
    \le (C'_B)^{2}n^{2}h^{2d}.
\label{eq: bound In random-design}
\end{align}
Thus, 
substituting (\ref{eq: bound In fixed-design}) or (\ref{eq: bound In random-design}), and (\ref{eq: bound Ipi}) into (\ref{eq: last bound}) yields
\begin{align*}
\inf_{\hat{f}}\sup_{f\in \mathcal{F}(\beta,L)}\mathbb{E}|f(0,0)-\hat{f}(0,0)|^{2}
\ge \frac{1}{n^{2}h^{2d}C_{l} \int \mathcal{I}(\mu)\pi(\mu)d\mu + h^{-2\beta}\mathcal{I}_{\tilde{\pi}} },
\end{align*}
where $C_{l}$ is a positive constant independent from $n$ and $h$.
Together with $h=n^{-1/(\beta+d)}$, this gives the desired inequality and completes the proof. \qed

% \bibliographystyle{apalike} % Chicago style, author-year citations
% \bibliography{NHLR}

\end{appendices}

\end{document}